\newtheorem{theorem}{Theorem}
\newtheorem{lemma}{Lemma}
\newtheorem{definition}{Definition}
\newtheorem*{remark}{Remark}
\newtheorem{example}{Example}
\newtheorem{proposition}{Proposition}
\journal{Journal of Algebra}
\begin{document}

\begin{frontmatter}



\title{Cyclic homology of Jordan superalgebras and related Lie superalgebras}

\author[label1]{Consuelo Martínez}
\author[label2]{Efim Zelmanov}
\author[label3]{Zezhou Zhang}

\affiliation[label1]{organization={Departamento de Matemáticas, Universidad de Oviedo},
            addressline={C/ Calvo Sotelo s/n}, 
            city={Oviedo},
            postcode={33007},
            country={Spain}}
\affiliation[label2]{organization={SICM, Southern University of Science and Technology},
            city={Shenzhen},
            postcode={518055}, 
            country={China}}
\affiliation[label3]{organization={Department of Mathematics, Beijing Normal University},
            city={Beijing},
            postcode={100875}, 
            country={China}}

\begin{abstract}

We study the relationship between cyclic homology of Jordan superalgebras and second cohomologies of their Tits-Kantor-Koecher Lie superalgebras.

In particular, we focus on Jordan superalgebras that are Kantor doubles of bracket algebras.
The obtained results are applied to computation of second cohomologies and universal central extensions of Hamiltonian and contact type Lie superalgebras over arbitrary rings of coefficients.

\end{abstract}

\begin{keyword}



Jordan algebra \sep superalgebra \sep superconformal algebra

\MSC[2020]  Primary \sep 17B60, 17B66  \sep Secondary \sep 17A70  17C70, 17B68, 81R10
\end{keyword}

\end{frontmatter}


\section*{Introduction}
\label{centralExtLie}

Let $L$ be a (super) algebra that is a Tits-Kantor-Koecher construction of $J$, a Jordan (super) algebra.
   S. Tan (in \cite{15Tan1999TKKA}) and, a bit later, B. Allison, G. Benkart, Y. Gao in \cite{1Allison2000CentralEO} linked second cohomology of the (super) algebra $L$ to cyclic homology of the (super) algebra  $J$, an analog of cyclic homology introduced by A. Connes (see \cite{19Kassel1984KDC}  and \cite{20Kassel1982EC}).
  
For the classical case of superconformal algebras $K(1:n)$, an explicit description of second cohomologies is due to V. G. Kac and J. van de Leur \cite{5Kac1988ClassSCA}.

\medskip

We study, in this paper, the relationship between cyclic homology of Jordan superalgebras and second cohomologies of their Tits-Kantor-Koecher Lie superalgebras. We note that apart from this work and the above mentioned \cite{15Tan1999TKKA} and \cite{1Allison2000CentralEO}, we are not aware of any other references concerning cyclic homology of Jordan (super) algebras.

In particular, we focus on Jordan superalgebras that are Kantor doubles of
bracket algebras. The main results related to cyclic homology of Kantor doubles appear in Section 4.  

The first 3 sections of the paper intend to provide the reader of all needed definitions and preliminary results concerning Lie superalgebras (Section 1), Jordan superalgebras (Section 2) and Brackets (Section 3).

Our interest in Kantor doubles stems from the fact that these Jordan superalgebras are related to superconformal algebras via the Tits-Kantor-Koecher construction.  

The obtained results are applied to the computation of second cohomologies and universal central extensions of Hamiltonian and contact type Lie superalgebras over arbitrary rings of coefficients, in sections 5 through 7. 

 Due to these applications to superconformal algebras -- where the fact of zero characteristic is needed -- we will assume that all considered vector spaces are over a field of zero characteristic, even though all results in Section 2 are valid over fields of characteristics not 2,3.

\section{Preliminaries (Lie superalgebras)}

Let $L=L_{\bar{0}}+L_{\bar{1}}$ be a Lie superalgebra.
A bilinear mapping $L\times L\to F,\; a\times b\mapsto (a|b)\in F$ is called a $2$-cocycle if and only if it is super skew-symmetric and
$$([a, b]|c)+(-1)^{|a|(|b|+|c|)}([b, c]|a)+(-1)^{|c|(|a|+|b|)}([c, a]|b)=0$$
for arbitrary elements $a,b,c\in L$.

\vspace{0.8em}

For an arbitrary bilinear functional $\lambda :L\to F$,
the bilinear mapping $(a|b)=\lambda ([a,b])$ is a $2$-cocycle.
Such cocycles are called $2$-coboundaries.

Let $C^2(L)$ be the vector space of all $2$-cocycles of $L$, and
$B^2(L)$ be the vector space of all $2$-coboundaries of $L$.

\vspace{0.8em}

A Lie superalgebra $L$ is said to be \textit{perfect} if $L=[L,L]$.
Let $L, \widetilde{L}$ be perfect Lie superalgebras.
A surjective homomorphism $\widetilde{L}\stackrel{\varphi}{\longrightarrow}L$
is called a \textit{central extension} if the kernel of $\varphi$ lies in the center
$Z(\widetilde{L})$ of the superalgebra $\widetilde{L}$.

Following ideas of I. Schur \cite{14Schur1904UDG}, H. Garland \cite{3Garland1980TheAT} showed that
for an arbitrary perfect Lie (super)algebra $L$, there exists
a unique universal central extension $\widehat{L}\stackrel{u}{\longrightarrow}L$ such that for an arbitrary central extension  $\widetilde{L}\stackrel{\varphi}{\longrightarrow}L$,
there exists a homomorphism $\chi :\widehat{L}\to\widetilde{L}$ making the diagram

$\begin{tikzcd}
 \widehat{L} \arrow[r,"u"] \arrow[d,"\chi"] & L \\
 \widetilde{L} \arrow[ur,"\varphi"]
\end{tikzcd}$
commutative.

\vspace{0.8em}

The vector space $H^2(L)$ of second cohomologies can be identified with the dual space $Z(\widehat{L})^{*}$, where $Z(\widehat{L})$ is the center of the universal central extension $\widehat{L}$.

The center $Z(\widehat{L})$ can be identified with the space $(L\otimes L)\mathbin{\scalebox{1.25}{\slash}}V$, where $V$ is the subspace spanned by all tensors $$a\otimes b + (-1)^{|a|\cdot|b|}b\otimes a,\; [a,b]\otimes c + (-1)^{|a|(|b|+|c|)}[b, c]\otimes a + (-1)^{|c|(|a|+|b|)}[c, a]\otimes b$$ with $a,b,c\in L$.

\vspace{0.8em}

For further references to universal central extensions see \cite{3Garland1980TheAT, 13Neher2003UCEL}.

\vspace{0.8em}

Suppose that a Lie superalgebra $L$ is $\mathbb{Z}$-graded, $L=\sum _{i\in\mathbb{Z}}L_i = L_{-2}+L_0+L_2$, all homogeneous components $L_i,\; i\neq -2,0,2$ are equal to $0$.
Suppose also that $L_0 = [L_{-2},L_{2}]$.
Let $(x\; |\; y)$ be a $2$-cocycle on $L$.
Then for arbitrary elements $a_{-2},c_{-2}\in L_{-2}; b_2,d_2\in L_2$, we have
\begin{equation}
\label{eq1}
\begin{aligned}
\quad & \quad ([a_{-2},[c_{-2},d_{2}]]\; |\; b_2) + (-1)^{|b|\cdot (|c|+|d|)}(a_{-2}\; |\; [b_2, [c_{-2},d_{2}]]) \\
&= (-1)^{|b|\cdot (|c|+|d|)}([[a_{-2},b_2],c_{-2}]\; |\; d_2) +\\
&\qquad (-1)^{|a|\cdot |c| + |b|\cdot |d|}(c_{-2}\; |\; [[a_{-2},b_2],d_2]).    
\end{aligned}
\end{equation}

In fact, both sides are equal to $(-1)^{|b|\cdot (|c|+|d|)}([a_{-2},b_{2}]\; |\; [c_{-2},d_{2}])$.

\begin{lemma}
    A bilinear mapping $L_{-2}\times L_{2}\to F, a_{-2}\times b_2\mapsto (a_{-2}\; |\; b_2)$ satisfying \eqref{eq1} uniquely extends to a $2$-cocycle on $L$.
\end{lemma}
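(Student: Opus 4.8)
The plan is to build the extension degree by degree with respect to the $\mathbb{Z}$-grading, aiming for the natural \emph{degree-zero} extension: the cocycle $(\,\cdot\mid\cdot\,)$ that vanishes on $L_i\times L_j$ whenever $i+j\neq 0$. The point of restricting to this homogeneous piece is that every term of the cocycle identity pairs a bracket $[x,y]$ (of degree $i+j$) with a third element (of degree $k$), so that identity is vacuous unless $i+j+k=0$; thus only triples of total degree $0$, namely those of type $\{0,0,0\}$ and $\{-2,0,2\}$, have to be checked. Among the possibly nonzero components of the form itself, $L_{-2}\times L_{-2}$ and $L_2\times L_2$ have nonzero total degree and hence vanish, the value on $L_2\times L_{-2}$ is forced by super skew-symmetry from the prescribed form on $L_{-2}\times L_2$, and the whole problem reduces to constructing the component on $L_0\times L_0$ and verifying the cocycle identity.

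First I would define the form on $L_0\times L_0$. Since $L_0=[L_{-2},L_2]$, every element of $L_0$ is a sum of brackets $[a_{-2},b_2]$, and I set $([a_{-2},b_2]\mid[c_{-2},d_2])$ equal to the right-hand side of \eqref{eq1}, read as a definition. The crucial point, and the main obstacle, is that this is \emph{well defined}, i.e. independent of how an element of $L_0$ is written as a sum of brackets, and here \eqref{eq1} does the essential work. Fixing homogeneous parities (harmless by linearity, since all signs then stay constant), the right-hand side of \eqref{eq1} depends on the first pair $(a_{-2},b_2)$ only through $[a_{-2},b_2]$: if $\sum_i[a^i_{-2},b^i_2]=0$, then the summed right-hand side collapses to terms of the form $([\sum_i[a^i_{-2},b^i_2],c_{-2}]\mid d_2)$ and $(c_{-2}\mid[\sum_i[a^i_{-2},b^i_2],d_2])$, which vanish. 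By the same computation the left-hand side of \eqref{eq1} depends on the second pair $(c_{-2},d_2)$ only through $[c_{-2},d_2]$. Since \eqref{eq1} asserts that the two sides coincide, the common value is simultaneously well defined in each argument, so it descends to a genuine bilinear form on $L_0\times L_0$.

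With the form in hand I would verify the remaining properties. Super skew-symmetry on $L_0\times L_0$ follows by a direct sign computation, comparing the right-hand side of \eqref{eq1} for $([a,b]\mid[c,d])$ with its value for $([c,d]\mid[a,b])$ and using \eqref{eq1} together with super skew-symmetry of the bracket and of the given form. For the cocycle identity only the two degree-zero triple types survive. The mixed type $\{-2,0,2\}$, after writing the $L_0$-entry as a bracket, is precisely the relation that defines the $L_0\times L_0$ form and so holds by construction. The genuinely computational case is $\{0,0,0\}$: writing the three $L_0$-arguments as brackets of elements of $L_{\pm2}$ and expanding, one must show the cyclic sum vanishes; this is an invariance (associativity) statement for the new form and reduces, via the definition and repeated use of the graded Jacobi identity in $L$, back to \eqref{eq1}. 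I expect this to be the most tedious step, though conceptually routine once well-definedness is secured.

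Finally, uniqueness is immediate and does not even use \eqref{eq1}. If $(\,\cdot\mid\cdot\,)$ is any degree-zero cocycle restricting to the prescribed form on $L_{-2}\times L_2$, then its components of nonzero total degree vanish by homogeneity, its value on $L_2\times L_{-2}$ is fixed by skew-symmetry, and for $u=[a_{-2},b_2]\in L_0$ and arbitrary $w\in L_0$ the cocycle identity applied to the triple $(a_{-2},b_2,w)$ expresses $([a_{-2},b_2]\mid w)$ entirely through values of the form on $L_{-2}\times L_2$. Hence any such extension must agree with the one constructed above, which proves the claimed uniqueness.
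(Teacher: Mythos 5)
Your proposal follows essentially the same route as the paper: extend by zero off total degree zero, define the $L_0\times L_0$ component via the two sides of \eqref{eq1} (whose equality is exactly what makes the value independent of the chosen presentations of elements of $L_0=[L_{-2},L_2]$), and reduce the cocycle identity to the two triple types $\{-2,2,0\}$ and $\{0,0,0\}$. The one step you only sketch --- the $\{0,0,0\}$ case --- is handled in the paper by viewing $L_{-2}\otimes L_2$ as an $L_0$-module so that the cyclic sum becomes the pairing applied to a Jacobi-identity expression, which is precisely the ``reduction to the Jacobi identity'' you predict; your added uniqueness argument (which the paper omits) is correct given that, as you do, one restricts to degree-zero extensions.
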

\begin{proof}
    Without loss of generality, we assume that $L$ is a Lie algebra.
    We define the $2$-cocycle $\psi(x|y)$ on $L$ as follows:
    $$\psi (a_{-2}\; |\; b_2)=(a_{-2}\; |\; b_2),$$
    $$\psi ([a_{-2},b_2]\; |\; [c_{-2},d_2])= ([a_{-2},[c_{-2},d_{2}]]\; |\; b_2) + (-1)^{|b|\cdot (|c|+|d|)}(a_{-2}\; |\; [b_2, [c_{-2},d_{2}]]),$$
    $$\psi (L_i\; |\; L_j)=(0)\text{ for }i+j\neq 0.$$
    The identity \eqref{eq1} implies that
    $$\psi (\sum _i [a_{-2}^{(i)},b_2^{(i)}], \sum _j [c_{-2}^{(j)},d_2^{(j)}])=\sum _{i,j} \psi ([a_{-2}^{(i)},b_2^{(i)}],[c_{-2}^{(j)},d_2^{(j)}]))$$
    is well-defined.
    
\vspace{0.8em}

    Let us check that
    $$\psi ([x,y]\; |\; z) + \psi ([y,z]\; |\; x) + \psi ([z,x]\; |\; y) = 0.$$
    We need to consider only two cases:
    \begin{enumerate}
        \item $x=a_{-2},y=b_2,z=[c_{-2},d_2]$;
        \item $x,y,z\in [L_{-2},L_2]$.
    \end{enumerate}

\vspace{0.8em}

    \textit{Case 1.} We need to show that
    $$\psi ([a_{-2},b_2]\; |\; [c_{-2},d_2]) + \psi ([b_2, [c_{-2},d_2]]\; |\; a_{-2}) + \psi ([[c_{-2},d_2], a_{-2}]\; |\; b_2) = 0.$$

\vspace{0.8em}

    This expression is equal to
    $$\begin{aligned}
\quad & \quad ([a_{-2}, [c_{-2},d_2]]\; |\; b_2) + (a_{-2}\; |\; [b_2, [c_{-2},d_2]]) \\
&- (a_{-2}\; |\; [b_2, [c_{-2},d_2]]) + ([[c_{-2},d_2], a_{-2}]\; |\; b_2) = 0.    
\end{aligned}$$
    
\vspace{0.8em}

    \textit{Case 2.} Let ${\rho}_1,{\rho}_2,{\rho}_3\in [L_{-2},L_2],{\rho}_2=[a_{-2},b_2]$.
    Consider the mapping 
    $$L_{-2}\otimes L_2\stackrel{\widetilde{\psi}}{\longrightarrow}F,\quad x_{-2}\otimes y_2\stackrel{\widetilde{\psi}}{\mapsto}(x_{-2}\; |\; y_2).$$
    The tensor product $L_{-2}\otimes L_2$ is a module over the Lie algebra $[L_{-2},L_2]$.
    
\vspace{0.8em}

    Denote $\widetilde{\rho}_2=a_{-2}\otimes b_2$.
    We need to show that
    $$\psi ([{\rho}_1,{\rho}_2]\; |\; {\rho}_3) + \psi ([{\rho}_2,{\rho}_3]\; |\; {\rho}_1) + \psi ([{\rho}_3,{\rho}_1]\; |\; {\rho}_2) = 0.$$
    Taking into account that
    $$\psi ([{\rho}_1,{\rho}_2]\; |\; {\rho}_3)=-\psi ({\rho}_3\; |\; [{\rho}_1,{\rho}_2]),\; \psi ([{\rho}_2,{\rho}_3]\; |\; {\rho}_1)=\psi ({\rho}_1\; |\; [{\rho}_3,{\rho}_2]),$$
    the last equality follows from
    $$\widetilde{\psi}(-[{\rho}_3, [{\rho}_1,\widetilde{{\rho}_2}]]+[{\rho}_1, [{\rho}_3,\widetilde{{\rho}_2}]]+[[{\rho}_3,{\rho}_1],\widetilde{{\rho}_2}])=0,$$
    which is the Jacobi identity.
    This completes the proof of the lemma.
\end{proof}

\section{Jordan Superalgebras}
\label{JordanSuperalgebras}
\subsection{Preliminaries}
\label{Preliminaries}

\vspace{0.5em}

We start with basic definitions.
A \textit{Jordan algebra} is a vector space $J$ with a binary bilinear equation $(x,y)\mapsto xy$ satisfying the following identities:
$$xy=yx,\quad (x^2y)x=x^2(yx).$$
See \cite{4Jacobson1968StructureJA, 12McCrimmon2004TJA, 18Zhevlakov1882RNA}.

\vspace{0.8em}

Let $V$ be a vector space with countable dimension and let $G=G(V)$ denote the Grassmann (or exterior) algebra over $V$; that is, the quotient of the tensor algebra over the ideal generated by symmetric tensors.
Then $G(V)$ is a $\mathbb{Z}/2\mathbb{Z}$-graded algebra,
$G(V)=G(V)_{\bar{0}}+G(V)_{\bar{1}}$.
Its even part $G(V)_{\bar{0}}$ is the linear span of all tensors of even length, and the odd part $G(V)_{\bar{1}}$ is the linear span of all tensors of odd length.

\vspace{0.8em}

Let ${\cal V}$ be a variety of algebras defined by homogeneous identities (see \cite{4Jacobson1968StructureJA,  18Zhevlakov1882RNA}). A superalgebra $A=A_{\bar{0}}+A_{\bar{1}}$ is called a ${\cal V}$-superalgebra if its Grassmann envelope $G(A)=A_{\bar{0}}\otimes G(V)_{\bar{0}} + A_{\bar{1}}\otimes G(V)_{\bar{1}}$ lies in ${\cal V}$.

\vspace{0.8em}

Thus a Jordan superalgebra is a $\mathbb{Z}/2\mathbb{Z}$-graded algebra $J=J_{\bar{0}}+J_{\bar{1}}$ satisfying the graded identities
$$xy=(-1)^{|x|\cdot |y|}yx\quad\text{(supercommutativity), and}$$
$$\begin{aligned}
\quad & ((xy)z)t + (-1)^{|y|\cdot |z| + |y|\cdot |t| + |z|\cdot |t|}((xt)z)y + (-1)^{|x|\cdot |y| + |x|\cdot |z| + |x|\cdot |t| + |z|\cdot |t|}((yt)z)x \\
&= (xy)(zt) + (-1)^{|y|\cdot |z|}(xz)(yt) + (-1)^{|t|\cdot(|y|+|z|)}(xt)(yz).    
\end{aligned}$$

For an element $a\in J_{\bar{0}}\cup J_{\bar{1}}$, let $R(a)$ denote the operator of right multiplication $R(a):J\ni x\mapsto xa$, where $xR(a)R(b)=(xa)b$. \footnote{In this paper operators are constantly applied to the \textit{right} of vectors.}

\vspace{0.8em}

For arbitrary elements $a,b\in J_{\bar{0}}\cup J_{\bar{1}}$, the operator $D(a,b)=R(a)R(b) - (-1)^{|a|\cdot |b|}R(b)R(a)$ is a derivation of the superalgebra $J$. Such derivations are called inner derivations. The span of all inner derivations is a Lie superalgebra. We will denote it as $\operatorname{Inder}(J)$.

\vspace{0.8em}

For more references on Jordan superalgebras see \cite{10Zelmanov2009JSR}.

\vspace{0.8em}

J. Tits \cite{17Tits1962AA, 16Tits1962CJ} made the following observation. Let $L$ be a Lie (super) algebra, $L_{\bar{0}}$ contains $\mathfrak{sl}(2) = Fe + Ff + Fh, [e,f]=h, [h,e]=2e, [h,f]=-2f$ (we call such triple $e,f,h$ an $\mathfrak{sl}(2)$-triple).
Suppose that the operator $\operatorname{ad}(h): L\to L$ is diagonalizable and has eigenvalues $-2,0,2$, so $L=L_{-2}+L_{0}+L_{2}$ is a direct sum of eigenspaces. Then $J=(L_2, a\cdot b=[[a,f],b])$ is a Jordan (super)algebra.

Moreover, J. Tits \cite{17Tits1962AA, 16Tits1962CJ}, I. Kantor \cite{6Kantor1972GeneralizeJA} and M. Koecher \cite{9Koecher1967ImbeddingJL} showed that every Jordan (super)algebra can be obtained in this way.
The corresponding Lie (super)algebra $L$ is not unique. We will recall the constructions of two Lie superalgebras with these properties: the largest (universal) one and the smallest (reduced) one.

\vspace{0.8em}

For elements $x,y,z\in J_{\bar{0}}\cup J_{\bar{1}}$ of a Jordan superalgebra $J$, we consider their Jordan triple product $$\{ x,y,z \} = (xy)z + x(yz) - (-1)^{|x|\cdot |y|}y(xz).$$
Fix elements $y,z$ and consider the operator $V(y,z): x\mapsto \{ x,y,z \}$. Then $V(y,z)=D(y,z)+R(yz)$.

\subsection{The universal Tits-Kantor-Koecher construction}
\label{universalTKK}

\vspace{0.5em}

\begin{remark}
      We will introduce in the following, the TKK construction of a unital Jordan (super)algebra in the shortest way, using bases: even though it is possible to do it in  basis-free ways.
\end{remark}

Let $J$ be a Jordan (super)algebra with an identity $1$. Consider a basis $\{ e_i \} _{i\in I}$ of $J$. Let $$\{ e_i,e_j,e_k \} = \sum _{t} \gamma _{ijk}^te_t,\; \gamma _{ijk}^t\in F.$$
Define a Lie (super)algebra $\widehat{L}$ by generators $\{ x_i^{-}, x_j^{+} \} _{i,j}$ and relations
$$[[x_i^{+}, x_j^{-}],x_k^{+}]=2 \sum _{t} \gamma _{ijk}^t x_t^{+},$$
$$[[x_i^{-}, x_j^{+}],x_k^{-}]=2 \sum _{t} \gamma _{ijk}^t x_t^{-},$$
$$[x_i^{-}, x_j^{-}] = [x_i^{+}, x_j^{+}] = 0.$$

The resulting algebra $\widehat{L}$ is $\mathbb{Z}$-graded (let $\deg x_i^{+}=2, \deg x_i^{-}=-2$).
Moreover, $\widehat{L}$ is spanned by $x_i^{+}, x_j^{-}, [x_i^{+}, x_j^{-}]$, which implies that $\widehat{L_i}=(0)$ for $i\neq -2,0,2$.

\vspace{0.8em}

Choose $x_1=1$. Then $x_1^{+}, x_1^{-}, [x_1^{+}, x_1^{-}]$ is an $\mathfrak{sl}(2)$-triple, $J^{+}=\operatorname{span}(x_i^{+}, i\in I)=\widehat{L_2},\; J^{-}=\operatorname{span}(x_i^{-}, i\in I)=\widehat{L_{-2}}$ are eigenspaces of $\widehat{L}$ with respect to $\operatorname{ad}(h)$.

\vspace{0.8em}

The (super)algebra $\widehat{L}=TKK(J)$ is universal in the following sense. Let $L'$ be a Lie (super)algebra, $L'_{\bar{0}}\supset \mathfrak{sl}(2)= Fe' + Ff' + Fh',\; L'=L'_{-2}+L'_{0}+L'_{2},\; L'_0=[L'_{-2}, L'_{2}],\; (L^{-}_{2},\circ )\cong J$.
Then there exists an epimorphism $$\varphi :TKK(J)\stackrel{\varphi}{\longrightarrow}L',\quad \varphi (x_1^{+})=e',\; \varphi (x_1^{-})=f',$$
the $\operatorname{ker}\varphi$ lies in the center of $TKK(J)$.

\vspace{0.8em}

It is easy to see that a Lie (super)algebra with this universal property is unique. In particular, the construction above does not depend on a choice of a basis in $J$.

\subsection{The reduced Tits-Kantor-Koecher construction}
\label{reducedTKK}

\vspace{0.5em}

Again, let $J$ be a Jordan (super)algebra with $1$.
Consider two copies of the vector space $J$: $J^{-}, J^{+}$, and their direct sum $J^{-}\oplus J^{+}$.
For arbitrary elements $a^{-}\in J^{-},b^{+}\in J^{+}$, consider the linear operator $$\delta (a^{-},b^{+}):J^{-}\oplus J^{+} \to J^{-}\oplus J^{+},\quad x^{-}\mapsto -(-1)^{|a||b|}\{ x,b,a \} ^{-},\; x^{+}\mapsto \{ x,a,b \} ^{+}.$$

\vspace{0.8em}

It follows from Jordan identities that the span $\delta (J^{-},J^{+})$ is a Lie (super)algebra.
The direct sum of vector spaces $L=\overline{TKK(J)}=J^{-}\oplus\delta (J^{-},J^{+})\oplus J^{+}$ with the operations $[J^{-},J^{-}]=[J^{+},J^{+}]=(0),\; [a^{-},b^{+}]=2\delta (a^{-},b^{+})$ is a Lie superalgebra.
The elements $1^{-},1^{+},2\delta (1^{-},1^{+})$ form an $\mathfrak{sl}(2)$-triple. The Lie superalgebra $\overline{TKK(J)}$ is called the reduced Tits-Kantor-Koecher Lie (super)algebra of $J$. It is easy to see that $Z(\overline{TKK(J)})=(0)$.

\subsection{Cyclic homology of Jordan (super)algebras}
\label{cyclicHomology}

\vspace{0.5em}

Let $J$ be a Jordan (super)algebra with the identity element $e$.
Let $L=\overline{TKK(J)}=J^{+}+\delta (J^{+},J^{-})+ J^{-}$.
Let $(x|y)$ be a $2$-cocycle on the (super)algebra $L$.
The mapping $J^{+}\otimes e^{-}\to [J^{+}, e^{-}],\; a^{+}\otimes e^{-} \mapsto [a^{+}, e^{-}]$ is a bijection.
Indeed, if $[a^{+}, e^{-}]=0$, then $[[a^{+}, e^{-}],e^{+}]=2a^{+}=0$.
Hence we can define a linear functional $\lambda : L\to F$ such that for an arbitrary element $a\in J$ we have $(a^{+}\; |\; e^{-})=\lambda ([a^{+},e^{-}])$.
Subtracting the coboundary corresponding to $\lambda$ we can assume that
\begin{equation}
\label{eq2}
    (J^{+}\; |\; e^{-})=0.
\end{equation}

\vspace{0.8em}

Let $C_0^2(L)$ be the vector space of $2$-cocycles satisfying \eqref{eq2}.
We have shown that $C^2(L)=C_0^2(L) + B^2(L)$.

\vspace{0.8em}

\begin{lemma}
    Let $(x|y)$ be a $2$-cocycle from $C_0^2(L)$. Then $(J^{-}\; |\; e^{+})=0$.
\end{lemma}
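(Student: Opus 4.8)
The plan is to feed three copies of the $\mathfrak{sl}(2)$-elements $e^{\pm}$ into the quadratic cocycle relation \eqref{eq1}, so that its two sides collapse to multiples of $(a^-\mid e^+)$ and $(e^-\mid a^+)$, and then to annihilate the latter using hypothesis \eqref{eq2}.

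First I assemble the $\mathfrak{sl}(2)$-bookkeeping from the reduced construction. Put $h=[e^-,e^+]=2\delta(e^-,e^+)$. Since $e$ is the identity of $J$, a short computation with the triple product gives $\{x,e,a\}=xa=\{x,a,e\}$; hence the operator $\delta(e^-,e^+)$ acts as $-\mathrm{id}$ on $J^-$ and as $+\mathrm{id}$ on $J^+$, so that $[h,a^-]=-2a^-$ and $[h,e^+]=2e^+$, and therefore $[a^-,h]=2a^-$, $[e^+,h]=-2e^+$. The same computation, specialized at $x=e$, shows that $\delta(a^-,e^+)$ sends $e^-\mapsto -a^-$ and $e^+\mapsto a^+$, which, after the factor $2$ in $[a^-,e^+]=2\delta(a^-,e^+)$, yields the two key bracket identities
$$[[a^-,e^+],e^-]=-2a^-,\qquad [[a^-,e^+],e^+]=2a^+ .$$
(Taking $a=e$ sanity-checks the first against $[h,e^-]=-2e^-$.)

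Next I specialize \eqref{eq1} to $a_{-2}=a^-$, $b_2=e^+$, $c_{-2}=e^-$, $d_2=e^+$. All four arguments are even, so every sign factor equals $1$, and $[c_{-2},d_2]=[e^-,e^+]=h$. The left-hand side becomes $([a^-,h]\mid e^+)+(a^-\mid[e^+,h])=2(a^-\mid e^+)-2(a^-\mid e^+)=0$. The right-hand side becomes $([[a^-,e^+],e^-]\mid e^+)+(e^-\mid[[a^-,e^+],e^+])=-2(a^-\mid e^+)+2(e^-\mid a^+)$ by the two bracket identities above. Equating the sides already yields the clean symmetry $(a^-\mid e^+)=(e^-\mid a^+)$, valid for every $2$-cocycle. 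Finally $[[a^-,e^+],e^+]=2a^+\in J^+$, so hypothesis \eqref{eq2} together with super skew-symmetry gives $(e^-\mid a^+)=-(a^+\mid e^-)=0$; hence $(a^-\mid e^+)=0$ for all $a^-\in J^-$, that is, $(J^-\mid e^+)=0$.

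The only genuinely delicate point is establishing the two \emph{identity brackets} $[[a^-,e^+],e^{\pm}]$ with the correct constants and grades: everything rests on $[[a^-,e^+],e^-]$ being exactly $-2a^-$ (so the first right-hand term is precisely $-2(a^-\mid e^+)$) and on $[[a^-,e^+],e^+]$ lying in $J^+$ (so that \eqref{eq2} applies). Both are encodings of the single fact $\{x,e,a\}=xa$, i.e. that $e$ is the Jordan identity; once that is verified the remaining super-signs are trivial, since $e^{\pm}$ and $h$ are even.
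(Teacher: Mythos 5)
Your proof is correct and follows essentially the same route as the paper's: both reduce $(a^-\mid e^+)$ to $(a^+\mid e^-)$ by two applications of the cocycle identity to $\mathfrak{sl}(2)$-type brackets (your use of \eqref{eq1} packages exactly those two applications) and then invoke the normalization \eqref{eq2} together with skew-symmetry. One caveat: under the paper's conventions (operators act on the right, so the universal TKK relations give $[[a^-,e^+],e^-]=2a^-$ and $[[a^-,e^+],e^+]=-2a^+$), both of your key bracket identities are off by a sign; since the two flips occur together on the same side of \eqref{eq1}, your intermediate identity $(a^-\mid e^+)=(e^-\mid a^+)$ and hence the lemma are unaffected.
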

\begin{proof}
    An arbitrary element from $J^{-}$ can be represented as $[[a^{+}, e^{-}],e^{-}],~ a^{+}\in J^{+}$.
    Applying the identity $([x,y]\; |\; z) = (x\; |\; [y,z])- (-1)^{|x|\cdot |y|}(y\; |\; [x,z])$ twice, we get
    $$\begin{aligned}
        ([[a^{+}, e^{-}],e^{-}]\; |\; e^{+}) &= ([a^{+}, e^{-}]\; |\; [e^{-},e^{+}]) - (e^{-}\; |\; [[a^{+}, e^{-}],e^{+}]) \\
&= (a^{+}\; |\; [e^{-},[ e^{-},e^{+}]]) - (e^{-}\; |\; [a^{+}, [e^{-},e^{+}]]) - (e^{-}\; |\; [[a^{+}, e^{-}],e^{+}]) \\
&= -2(a^{+}\; |\; e^{-}) - 4(e^{-}\; |\; a^{+}) = 2(a^{+}\; |\; e^{-}) = 0.
    \end{aligned}$$
\end{proof}

From now on we consider a $2$-cocycle $(x\; |\; y)$ such that
\begin{equation}
\label{eq3}
    (J^{-}\; |\; e^{+})=(J^{+}\; |\; e^{-})=(0).
\end{equation}

Define the bilinear mapping $J\times J\to F$ as $(a\; |\; b)=(a^{+}\; |\; b^{-})$.

\begin{lemma}
    For arbitrary elements $a,b\in J_{\bar{0}}\cup J_{\bar{1}}$, we have $(a\; |\; b) + (-1)^{|a|\cdot |b|} (b\; |\; a)=0$.
\end{lemma}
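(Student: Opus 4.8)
The plan is to first reformulate the assertion into a more symmetric statement. By the super skew-symmetry of the cocycle, $(b^+\;|\;a^-) = -(-1)^{|a|\cdot|b|}(a^-\;|\;b^+)$, so the claimed identity $(a^+\;|\;b^-) + (-1)^{|a|\cdot|b|}(b^+\;|\;a^-) = 0$ is equivalent to
\[
(a^+\;|\;b^-) = (a^-\;|\;b^+).
\]
Thus it suffices to prove this single equality, and I intend to do so by rewriting both sides through one and the same pairing on $L_0\times L_0$. Throughout I will use the defining relations $[[x^+,y^-],z^+]=2\{x,y,z\}^+$ and $[[x^-,y^+],z^-]=2\{x,y,z\}^-$ (which hold in $\overline{TKK(J)}$, being a quotient of the universal construction), together with the unit computations $\{x,e,e\}=x$, $\{x,y,e\}=xy$, and $\{x,e,y\}=\{y,e,x\}=xy$, all immediate from $e$ being the identity.

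The first step is to peel off one bracket on each side. Since $\{b,e,e\}=b$, I can write $b^- = \tfrac12[[b^-,e^+],e^-]$ and $b^+ = \tfrac12[[b^+,e^-],e^+]$. Setting $u=[b^-,e^+]\in L_0$ and applying the cocycle identity $(x\;|\;[y,z]) = ([x,y]\;|\;z) + (-1)^{|x|\cdot|y|}(y\;|\;[x,z])$ to $(a^+\;|\;[u,e^-])$, the emerging cross-term is $([a^+,u]\;|\;e^-)$. Expanding $[a^+,[b^-,e^+]]$ by the super Jacobi identity and using $[a^+,e^+]=0$ gives $[a^+,u]=[[a^+,b^-],e^+]=2(ab)^+$, so the cross-term equals $2((ab)^+\;|\;e^-)$, which vanishes by the normalization \eqref{eq3}. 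This leaves
\[
(a^+\;|\;b^-) = \tfrac12(-1)^{|a|\cdot|b|}\bigl([b^-,e^+]\;|\;[a^+,e^-]\bigr),
\]
and the identical computation with $+$ and $-$ interchanged (now using $[a^-,e^-]=0$ and $(J^-\;|\;e^+)=0$) yields
\[
(a^-\;|\;b^+) = \tfrac12(-1)^{|a|\cdot|b|}\bigl([b^+,e^-]\;|\;[a^-,e^+]\bigr).
\]

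The crux is then to match these two $L_0$-pairings, and the key is the relation $[a^+,e^-]=-[a^-,e^+]$. To prove it I would show that $d:=[a^+,e^-]+[a^-,e^+]$ is central and invoke $Z(\overline{TKK(J)})=(0)$. A direct check gives $[d,c^+]=2(\{a,e,c\}-\{e,a,c\})^+=0$ and $[d,c^-]=2((ac)^-\!-\!(ac)^-)=0$ for all $c$, so $d$ kills $L_{\pm2}$, hence by Jacobi also $L_0=[L_{-2},L_2]$, so $d\in Z(\overline{TKK(J)})=(0)$. Likewise $[b^-,e^+]=-[b^+,e^-]$. Substituting these two sign reversals into the first displayed expression, the two minus signs cancel and it becomes identical to the second; therefore $(a^+\;|\;b^-)=(a^-\;|\;b^+)$, which is the required identity.

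The main obstacle is not any single computation but the pervasive super-sign bookkeeping, combined with the conceptual observation that the two \emph{a priori} unrelated elements $[a^+,e^-]$ and $[a^-,e^+]$ of $L_0$ differ only by a sign. This centrality argument is exactly what forces the two $L_0$-pairings to coincide; once it is in place, everything else reduces to routine applications of the super Jacobi identity, the cocycle identity, and the normalization \eqref{eq3}.
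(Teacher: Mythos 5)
Your proof is correct, and it takes a mildly but genuinely different route from the paper's. The paper argues in one direction: it writes $a^{+}=-\tfrac12[[a^{-},e^{+}],e^{+}]$ and applies the cocycle identity twice in succession, with the normalization \eqref{eq3} killing the cross-terms at each stage, landing directly on $(a^{-}\,|\,b^{+})$. You instead meet in the middle: a single application of the cocycle identity reduces each of $(a^{+}\,|\,b^{-})$ and $(a^{-}\,|\,b^{+})$ to a pairing of $L_0$-elements, and you then identify the two $L_0$-pairings via the relation $[a^{+},e^{-}]=-[a^{-},e^{+}]$. All your individual steps check out: the unit triple-product identities, the Jacobi expansion $[a^{+},[b^{-},e^{+}]]=[[a^{+},b^{-}],e^{+}]=2(ab)^{+}$, the vanishing of the cross-terms by \eqref{eq3}, and the sign bookkeeping ($|[b^{-},e^{+}]|=|b|$). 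The one remark worth making is that your key relation $[a^{+},e^{-}]=-[a^{-},e^{+}]$ does not really need the centrality argument: in the reduced construction it can be read off directly from the explicit formula for $\delta$, since $\delta(a^{-},e^{+})$ and $\delta(e^{-},a^{+})$ both send $x^{+}\mapsto (xa)^{+}$ and $x^{-}\mapsto -(xa)^{-}$, hence coincide as operators on $J^{-}\oplus J^{+}$; your detour through $Z(\overline{TKK(J)})=(0)$ is valid but heavier than necessary. Net comparison: the paper's route is shorter, while yours makes the $+\leftrightarrow-$ symmetry of the statement structurally visible at the cost of the auxiliary lemma.
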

\begin{proof}
    $$\begin{aligned}
    (a^{+}\; |\; b^{-}) &= -\frac{1}{2} ([[a^{-},e^{+}],e^{+}]\; |\; b^{-})\\
&= -\frac{1}{2} ( ([a^{-},e^{+}]\; |\; [e^{+},b^{-}]) - (e^{+}\; |\; [[a^{-},e^{+}],b^{-}]) ) \\
&= -\frac{1}{2} ([a^{-},e^{+}]\; |\; [e^{+},b^{-}]) \\
&= -\frac{1}{2} ( (a^{-}\; |\; [e^{+},[e^{+},b^{-}]]) - (e^{+}\; |\; [a^{-},[e^{+},b^{-}]]) ) \\
&= (a^{-}\; |\; b^{+}),
    \end{aligned}$$
    since $[e^{+},[e^{+},b^{-}]]=-2b^{+}$. Now, $(a^{-}\; |\; b^{+})=-(-1)^{|a|\cdot |b|}(b^{+}\; |\; a^{-})$, which completes the proof of the lemma.
\end{proof}

\begin{lemma}
    For arbitrary elements $a,b,c\in J$, we have
    \begin{equation}
    \label{eq4}
        (ab\; |\; c) + (-1)^{|a|(|b|+|c|)}(bc\; |\; a) + (-1)^{|b|\cdot |c|}(ac\; |\; b) = 0.
    \end{equation}
\end{lemma}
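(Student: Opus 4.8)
The plan is to express the pairing $(ab\mid c)$ entirely through the Lie bracket of $L=\overline{TKK(J)}$ and then to collapse it, by repeated use of the cocycle axiom and the normalization \eqref{eq3}, down to the two ``straight'' pairings $(a\mid bc)$ and $(b\mid ac)$; a final application of the super-skew-symmetry from the previous lemma then rearranges this into \eqref{eq4}.

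First I record the ingredients. From the defining relations of the Tits--Kantor--Koecher construction, together with the identities $\{a,e,b\}=ab$ and $\{e,b,c\}=bc$ that hold because $e$ is the identity of $J$, one obtains
$$[[a^{+},e^{-}],b^{+}]=2(ab)^{+},\quad [[a^{+},e^{-}],b^{-}]=-2(ab)^{-},\quad [e^{-},[b^{+},c^{-}]]=2(bc)^{-},$$
the last of these via the super-Jacobi identity together with $[e^{-},c^{-}]=0$. I shall also use the derivation form of the cocycle axiom, $([x,y]\mid z)=(x\mid[y,z])-(-1)^{|x|\,|y|}(y\mid[x,z])$, exactly as in the proofs of the two preceding lemmas.

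Now I compute. Writing $(ab\mid c)=((ab)^{+}\mid c^{-})=\tfrac12([[a^{+},e^{-}],b^{+}]\mid c^{-})$ and applying the cocycle axiom to peel off $b^{+}$ gives
$$([[a^{+},e^{-}],b^{+}]\mid c^{-})=([a^{+},e^{-}]\mid[b^{+},c^{-}])-(-1)^{|a|\,|b|}(b^{+}\mid[[a^{+},e^{-}],c^{-}]).$$
The second summand equals $-(-1)^{|a|\,|b|}(b^{+}\mid -2(ac)^{-})=2(-1)^{|a|\,|b|}(b\mid ac)$. For the first summand I apply the cocycle axiom once more to peel off $e^{-}$:
$$([a^{+},e^{-}]\mid[b^{+},c^{-}])=(a^{+}\mid[e^{-},[b^{+},c^{-}]])-(e^{-}\mid[a^{+},[b^{+},c^{-}]]).$$
The crucial point is the second term here: since $[a^{+},[b^{+},c^{-}]]\in L_{2}=J^{+}$, it is a pairing of $e^{-}$ against $J^{+}$, which vanishes by \eqref{eq3} and super-skew-symmetry; what remains is $(a^{+}\mid 2(bc)^{-})=2(a\mid bc)$. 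Collecting everything and dividing by $2$ yields $(ab\mid c)=(a\mid bc)+(-1)^{|a|\,|b|}(b\mid ac)$.

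The last step is bookkeeping: rewriting $(a\mid bc)=-(-1)^{|a|(|b|+|c|)}(bc\mid a)$ and $(b\mid ac)=-(-1)^{|b|(|a|+|c|)}(ac\mid b)$ by the super-skew-symmetry of the previous lemma, and simplifying $(-1)^{|a|\,|b|}(-1)^{|b|(|a|+|c|)}=(-1)^{|b|\,|c|}$, converts the displayed identity into exactly \eqref{eq4}. I expect the main difficulty to be not any single step but keeping the factors of $2$ and the $\mathbb{Z}/2$-signs consistent throughout; the one genuinely structural point --- and the reason \eqref{eq3} is imposed beforehand --- is that the normalization forces the a priori uncontrolled $L_{0}\times L_{0}$ pairing $([a^{+},e^{-}]\mid[b^{+},c^{-}])$ to reduce to the single term $(a\mid bc)$ rather than producing further pairings.
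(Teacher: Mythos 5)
Your proof is correct and follows essentially the same route as the paper: both arguments reduce \eqref{eq4} to the Lie cocycle identity on $\overline{TKK(J)}$ combined with the normalization \eqref{eq3} and the super-skew-symmetry of Lemma 3. The only difference is organizational: the paper first derives the four-variable triple-product identity \eqref{eq5} from \eqref{eq1} and then specializes $d=e$, whereas you perform that specialization from the outset by expanding $((ab)^{+}\mid c^{-})=\tfrac12([[a^{+},e^{-}],b^{+}]\mid c^{-})$ and peeling off brackets directly.
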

\begin{proof}
    Let $a,b,c,d\in J_{\bar{0}}\cup J_{\bar{1}}$. The equality \eqref{eq1} applied to $\left([a^{+},b^{-}] ~|~ [c^{+},d^{-}]\right)$ yields:
     $$\begin{aligned}
    \quad & ([[a^{+},b^{-}],c^{+}]\; |\; d^{-}) - (-1)^{|c|\cdot |b| + |c|\cdot |a| + |a|\cdot |b|} (c^{+}\; |\; [[b^{-},a^{+}],d^{-}])\\
&= - (-1)^{|b|\cdot |c| + |b|\cdot |d| + |c|\cdot |d|} ([a^{+},[d^{-},c^{+}]]\; |\; b^{-}) + (a^{+}\; |\; [b^{+},[c^{+},d^{-}]]).
    \end{aligned}$$
    In the language of Jordan triple products, it looks as:
     $$\begin{aligned}
    \quad & (\{ a,b,c \}\; |\; d) - (-1)^{|c|\cdot |b| + |c|\cdot |a| + |a|\cdot |b|} (c\; |\; \{ b,a,d \} )\\
&= - (-1)^{|b|\cdot |c| + |b|\cdot |d| + |c|\cdot |d|} (\{ a,d,c \}\; |\; b) + (a\; |\; \{ b,c,d \}).
    \end{aligned}$$
    Equivalently,
    \begin{equation}
    \label{eq5}
        \begin{aligned}
    \quad & \quad (\{ a,b,c \}\; |\; d) + (-1)^{|a|\cdot |b| + |d|\cdot |c|} (\{ b,a,d \} \; |\; c)\\
&+ (-1)^{|b|\cdot |c| + |b|\cdot |d| + |c|\cdot |d|} (\{ a,d,c \}\; |\; b)\\ 
&+ (-1)^{|a|(|b|+|c|+|d|)}(\{ b,c,d \} \; |\; a) = 0.
    \end{aligned}
    \end{equation}
    Let $d=e$. Then we get the assertion of the lemma.
\end{proof}

\begin{definition}
Let $J$ be a Jordan (super) algebra. We call a (super) skew-symmetric bilinear mapping $J\times J\to F$ a cyclic cocycle if the identity \eqref{eq4} holds.
\end{definition}

\begin{remark}
    The identity \eqref{eq4} immediately implies that $(J\; |\; e)=(0)$.
\end{remark}

Let $C(J)$ be the vector space of all cyclic cocycles on $J$.
We have described the linear mapping $C_0^{2}(L)\stackrel{\mu}{\longrightarrow}C(J)$.
Since a $2$-cocycle on $L$ is uniquely determined by its values on $J^{+}\otimes J^{-}$, it follows that the mapping $\mu$ is injective.

\begin{lemma}
    The mapping $\mu$ is bijective.
\end{lemma}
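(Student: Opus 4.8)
Injectivity of $\mu$ has already been established, so the plan is to prove surjectivity. Fix a cyclic cocycle $(\cdot\,|\,\cdot)\in C(J)$; I want to exhibit a $2$-cocycle $\psi\in C_0^2(L)$ with $\mu(\psi)=(\cdot\,|\,\cdot)$. Lemma 1 tells me exactly how to build cocycles on $L$: it is enough to prescribe a bilinear form on $L_{-2}\times L_2=J^-\times J^+$ satisfying \eqref{eq1}, and such a form then extends uniquely. I therefore set $\phi(a^-,b^+)=(a\,|\,b)$ and reduce the whole statement to verifying that $\phi$ satisfies \eqref{eq1}; the extension $\psi$ together with the two bookkeeping checks below will finish the argument.

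The first step is to rewrite \eqref{eq1} in Jordan terms. Every one of its four summands pairs an element of $L_{-2}$ with an element of $L_2$, so \eqref{eq1} is a condition involving only the values of $\phi$. Expanding the brackets $[a^-,[c^-,d^+]]$, $[b^+,[c^-,d^+]]$, $[[a^-,b^+],c^-]$ and $[[a^-,b^+],d^+]$ through the reduced Tits--Kantor--Koecher multiplication (the operators $\delta$ acting by Jordan triple products) is precisely the bracket computation performed in the proof of Lemma 5, and carried out here it turns \eqref{eq1} for $\phi$ into a rearrangement of \eqref{eq5}. Hence it suffices to prove that a super skew-symmetric form satisfying the three-variable identity \eqref{eq4} automatically satisfies the four-variable identity \eqref{eq5}.

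This implication is the heart of the proof and the step I expect to be the main obstacle. I would expand each triple product by $\{ x,y,z \}=(xy)z+x(yz)-(-1)^{|x|\cdot|y|}y(xz)$, turning \eqref{eq5} into a sum of terms $(\,uv\,|\,w)$. The mechanism is already visible in the commutative associative model, where $\{ a,b,c \}=abc$: applying \eqref{eq4} to the triples $(ab,c,d)$ and $(cd,a,b)$ and adding produces the four terms of \eqref{eq5} together with $(ab\,|\,cd)$ and $-(ab\,|\,cd)$, which cancel. In a general Jordan superalgebra the same grouping must be used, but the associators created when passing between $(xy)z$, $x(yz)$ and Jordan triple products have to be absorbed by the linearized Jordan identity (the second graded identity in the definition of a Jordan superalgebra), together with supercommutativity and the relation $V(y,z)=D(y,z)+R(yz)$. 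Organizing these applications so that every associator correction cancels is the only genuinely computational part of the lemma.

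Once \eqref{eq1} is verified, Lemma 1 produces a unique $2$-cocycle $\psi$ on $L$ restricting to $\phi$. Because a cyclic cocycle satisfies $(J\,|\,e)=0$ (the Remark following \eqref{eq4}) and is skew, super skew-symmetry gives $\psi(a^+\,|\,e^-)=-(e\,|\,a)=0$, so $\psi$ satisfies \eqref{eq2} and lies in $C_0^2(L)$. Finally, using super skew-symmetry of both $\psi$ and $(\cdot\,|\,\cdot)$, one gets $\mu(\psi)(a,b)=\psi(a^+\,|\,b^-)=(a\,|\,b)$, so $\mu(\psi)=(\cdot\,|\,\cdot)$. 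Combined with the injectivity noted before the statement, this shows that $\mu$ is bijective.
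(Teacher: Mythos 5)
Your framework coincides with the paper's: injectivity is already known, and surjectivity reduces via Lemma~1 to checking that the form $\phi(a^-,b^+)=(a\,|\,b)$ built from a cyclic cocycle satisfies \eqref{eq1}, equivalently that \eqref{eq4} implies \eqref{eq5}; the final bookkeeping (membership in $C_0^2(L)$ via $(J\,|\,e)=0$, and $\mu(\psi)=(\cdot\,|\,\cdot)$) is fine. The problem is that the one substantive step --- the implication \eqref{eq4} $\Rightarrow$ \eqref{eq5} --- is never carried out. You verify it only in the associative model $\{a,b,c\}=abc$ and then defer the general case to an unexecuted plan in which ``associator corrections'' are to be absorbed by the linearized Jordan identity. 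That deferred step is the entire content of the lemma, so as written the proof is incomplete; worse, the plan as described points in the wrong direction, since organizing a cancellation scheme around the linearized Jordan identity is a genuinely unpleasant task that the actual argument never requires.

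The step in fact goes through purely formally, with no Jordan identity at all. Expanding each of the four triple products in \eqref{eq5} by $\{x,y,z\}=(xy)z+x(yz)-(-1)^{|x|\cdot|y|}y(xz)$ and using supercommutativity yields twelve terms, each of the shape $(\,(\text{product of two letters})\cdot(\text{letter})\,|\,\text{letter}\,)$. These twelve terms pair off into six groups, each group consisting of two of the three summands of \eqref{eq4} applied to one of the triples $(ab,c,d)$, $(bc,a,d)$, $(ac,b,d)$, $(ad,b,c)$, $(bd,a,c)$, $(cd,a,b)$; replacing each pair by minus the remaining summand leaves (in the non-super case) $-(cd\,|\,ab)-(ad\,|\,bc)+(bd\,|\,ac)-(bc\,|\,ad)+(ac\,|\,bd)-(ab\,|\,cd)$, which vanishes by skew-symmetry. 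This is exactly the paper's computation: only the definition of the triple product, supercommutativity, \eqref{eq4} and skew-symmetry enter. Your proof would be complete, and would match the paper's, once this grouping is written out; without it, the heart of the lemma is missing.
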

\begin{proof}
    We need to show that \eqref{eq4} implies \eqref{eq1}.
    Again without loss of generality, we will consider the case of algebras, not superalgebras.
    Choose arbitrary elements $a,b,c,d\in J$.
    The left hand side of \eqref{eq1} is:
    $$\begin{aligned}
    \quad & \quad ((ab)c^{(1)} + a(bc)^{(2)} - b(ac)^{(3)} \; |\; d) + ((ab)d^{(1)} + b(ad)^{(4)} - (bd)a^{(5)} \; |\; c) \\
&+ ((ad)c^{(4)} + a(dc)^{(6)} - (ac)d^{(3)} \; |\; b) + ((bc)d^{(2)} + b(cd)^{(6)} - (bd)c^{(5)} \; |\; a).
    \end{aligned}$$
    The upper script is the number of the ``grouping".
    For example, in the group $(1)$, the identity \eqref{eq2} is applied to three elements $ab,c,d$.
    We get $$\begin{aligned}
    \quad & \quad - (dc\; |\; ab)^{(1)} - (ad\; |\; bc)^{(2)} + (bd\; |\; ac)^{(3)} \\
&- (bc\; |\; ad)^{(4)} + (ac\; |\; bd)^{(5)} - (ab\; |\; dc)^{(6)}.
    \end{aligned}$$
    This expression is equal to $0$ since the cocycle $(x\; |\; y)$ is skew-symmetric.
\end{proof}

Recall that $\operatorname{Inder}(J)$ is the span of all inner derivations of a Jordan superalgebra $J$, and $\operatorname{Inder}(J)$ is a Lie superalgebra.
For arbitrary elements $a,b,c\in J_{\bar{0}}\cup J_{\bar{1}}$, we have
\begin{equation}
\label{eq6}
    D(ab,c) + (-1)^{|a|(|b|+|c|)}D(bc,a) + (-1)^{|b|\cdot |c|}D(ac,b) = 0
\end{equation}
(see \cite{2Cantarini2007Classification}).

\vspace{0.8em}

Let $\lambda :\operatorname{Inder}(J)\to F$ be a linear functional.
From \eqref{eq6} it follows that $(a\; |\; b)=\lambda (D(a,b))$ is a cyclic cocycle of the Jordan super-algebra $J$.
We call such cocycles \textit{cyclic coboundaries}.

\vspace{0.8em}

Let $B(J)$ be the vector space of all cyclic coboundaries of $J$.
Following S. Tan \cite{15Tan1999TKKA} and B. Allison, G. Benkart, Y. Gao \cite{1Allison2000CentralEO}\footnote{In \cite{1Allison2000CentralEO}, $HC(J)$ is called the full skew-dihedral homology group.}, we call $HC(J)=C(J)/B(J)$ the \textit{cyclic homology} space of the Jordan (super) algebra $J$.

\vspace{0.8em}

In the important case of $J$ coming from associative algebra, $HC(J)$ is the first cyclic homology group of A. Connes.

\begin{lemma}
    $\mu (B^2(L)\cap C_0^2(L)) = B(J)$.
\end{lemma}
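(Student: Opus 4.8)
The plan is to establish the identity $\mu(B^2(L) \cap C_0^2(L)) = B(J)$ by a double inclusion, exploiting the explicit description of $\mu$ as the restriction map sending a $2$-cocycle $(x \mid y)$ on $L$ to the cyclic cocycle $(a \mid b) = (a^+ \mid b^-)$ on $J$. The central observation is that the coboundaries on both sides are governed by linear functionals: on the Lie side by a functional $\lambda \colon L \to F$ via $(x \mid y) = \lambda([x,y])$, and on the Jordan side by a functional $\mu_0 \colon \operatorname{Inder}(J) \to F$ via $(a \mid b) = \mu_0(D(a,b))$. So the proof really amounts to matching these two families of functionals under the correspondence between the bracket $[a^+, b^-]$ in $L$ and the inner derivation $D(a,b)$ in $\operatorname{Inder}(J)$.

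First I would prove the inclusion $\mu(B^2(L) \cap C_0^2(L)) \subseteq B(J)$. Take a coboundary $(x \mid y) = \lambda([x,y])$ that also lies in $C_0^2(L)$, and restrict attention to $L_0 = [L_{-2}, L_2] = \delta(J^+, J^-)$. Recall from the reduced construction that $[a^+, b^-] = 2\delta(a^-, b^+)$ and that $V(a,b) = D(a,b) + R(ab)$, so the degree-zero component $L_0$ of $L$ is spanned by the operators recording both $D(a,b)$ and $R(ab)$. The computation $(a \mid b) = (a^+ \mid b^-) = \lambda([a^+, b^-])$ then expresses the restricted cyclic cocycle as a functional evaluated on $L_0$. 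I would then show that, because the cocycle satisfies the normalization \eqref{eq3}, the $R(ab)$-part of this functional contributes nothing to the cyclic structure, leaving a well-defined functional on $\operatorname{Inder}(J)$ whose associated cyclic coboundary is exactly $\mu$ of the original. This yields $\mu(\text{coboundary}) \in B(J)$.

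For the reverse inclusion $B(J) \subseteq \mu(B^2(L) \cap C_0^2(L))$, I would start with a cyclic coboundary $(a \mid b) = \mu_0(D(a,b))$ for some $\mu_0 \colon \operatorname{Inder}(J) \to F$. The goal is to produce a linear functional $\lambda$ on $L$ whose coboundary restricts correctly and lies in $C_0^2(L)$. The natural candidate is to define $\lambda$ on $L_0$ using $\mu_0$ (transported along the map $[a^+, b^-] \mapsto 2\delta(a^-, b^+) \leftrightarrow D(a,b)$) and to set $\lambda$ equal to zero on $L_{\pm 2}$. Because the previous lemma (bijectivity of $\mu$) guarantees that any cyclic cocycle extends uniquely to a $2$-cocycle on $L$, it suffices to check that the coboundary of this $\lambda$ restricts to the given cyclic coboundary on $J^+ \otimes J^-$ and satisfies the normalization \eqref{eq3}; the normalization on $L_{\pm 2}$ follows from $\lambda$ vanishing there, and the matching on $J^+ \otimes J^-$ follows from the definition of $\lambda$ on $L_0$.

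The main obstacle I anticipate is the first inclusion, specifically disentangling the $R(ab)$-term from the $D(a,b)$-term in the functional on $L_0$. A functional $\lambda$ on $L_0$ does not a priori factor through $\operatorname{Inder}(J)$, since $L_0$ also carries the information of multiplication operators $R(ab)$; one must use the relation $V(y,z) = D(y,z) + R(yz)$ together with the normalization \eqref{eq3} and the Remark that $(J \mid e) = 0$ to argue that the $R$-contribution is annihilated or absorbed into a coboundary adjustment. Establishing that this decomposition is compatible with the cyclic-cocycle identity \eqref{eq4}, so that the resulting functional on $\operatorname{Inder}(J)$ is genuinely well-defined (respecting \eqref{eq6}), is where the real work lies; the reverse inclusion is comparatively formal once the bijectivity of $\mu$ is invoked.
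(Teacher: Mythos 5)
Your overall architecture (double inclusion, matching the Lie-side functional $\lambda$ on $L_0=[L_{-2},L_2]$ with a Jordan-side functional on $\operatorname{Inder}(J)$) is the same as the paper's, but both inclusions hinge on well-definedness statements that you explicitly leave open, and these are the actual content of the lemma. For the inclusion $\mu(B^2(L)\cap C_0^2(L))\subseteq B(J)$, the issue you correctly identify --- that $\lambda$ restricted to $L_0$ sees $D(a,b)+R(ab)$ rather than $D(a,b)$ alone --- is not resolved by the normalization \eqref{eq3} or by ``absorbing the $R$-part into a coboundary adjustment.'' The mechanism in the paper is different: by the super skew-symmetry of the induced cyclic cocycle (Lemma 3) one replaces $\lambda([a^+,b^-])$ by $\tfrac12\lambda\bigl([a^+,b^-]-(-1)^{|a||b|}[b^+,a^-]\bigr)$, and in this antisymmetrized combination the $R(ab)$-contributions cancel: a direct computation of the bracket with $c^{\pm}$ shows that $\sum_i D(a_i,b_i)=0$ forces $\sum_i\bigl([a_i^+,b_i^-]-(-1)^{|a_i|\cdot|b_i|}[b_i^+,a_i^-]\bigr)$ into $Z(L)$, which is $(0)$ because $L$ is the \emph{reduced} TKK construction. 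That is the step that makes $\delta(D(a,b))=\lambda([a^+,b^-])$ well defined, and it is absent from your plan.

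The reverse inclusion has a symmetric gap. To transport a functional from $\operatorname{Inder}(J)$ to a functional $\lambda$ on $L_0$ via $[a^+,b^-]\mapsto D(a,b)$ you must show that $\sum_i[a_i^+,b_i^-]=0$ implies $\sum_i D(a_i,b_i)=0$; this does not follow from the bijectivity of $\mu$, which concerns cocycles rather than elements of $L_0$. The paper proves it by letting $\sum_i[a_i^+,b_i^-]$ act on $x^-$, obtaining $\sum_i D(a_i,b_i)+R\bigl(\sum_i a_ib_i\bigr)=0$, and then evaluating at $x=e$ to kill the $R$-term. Without these two well-definedness arguments the proposal is a correct outline but not a proof.
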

\begin{proof}
    Let a $2$-cocycle $\varphi\in C_0^2(L)$ be a coboundary. It means that there exists a linear functional $\lambda :L\to F$ such that $\varphi (a\; |\; b)=\lambda ([a,b])$. In particular, $\lambda ([J^{+},e^{-}])=\lambda ([J^{-},e^{+}])=(0)$.

\vspace{0.8em}

    Let $\psi = \mu (\varphi)$. Then for arbitrary elements $a,b\in J$, we have $\psi (a\; |\; b)=\lambda ([a^{+},b^{-}])$.
    By Lemma 3, $\lambda ([a^{+},b^{-}]) = - (-1)^{|a|\cdot |b|}\lambda ([b^{+},a^{-}])=\frac{1}{2}(\lambda([a^{+},b^{-}])-(-1)^{|a|\cdot |b|}\lambda ([b^{+},a^{-}]))$.

\vspace{0.8em}

    Define the linear functional $\delta : \operatorname{Inder}(J)\to F,\quad \delta (D(a,b))=\lambda ([a^{+},b^{-}])$.
    
\vspace{0.8em}

    We need to show that $\sum _i D(a_i,b_i)=0$ implies $\lambda (\sum _i [a_i^{+},b_i^{-}])=0$. Without loss of generality, we assume that $|a_i|+|b_i|$ does not depend on $i$. We will show that $\sum _i D(a_i,b_i)=0$ implies $\sum _i ([a_i^{+},b_i^{-}] - (-1)^{|a_i|\cdot |b_i|}[b_i^{+},a_i^{-}]) \in Z(L)$. Therefore $\sum _i ([a_i^{+},b_i^{-}] - (-1)^{|a_i|\cdot |b_i|}[b_i^{+},a_i^{-}]) = 0$ as $L$ is the reduced Tits-Kantor-Koecher Lie superalgebra of $J$.

\vspace{0.8em}

    For an arbitrary element $c\in J$, we have
    $$\begin{aligned}
    \quad & \quad [\sum _i ([a_i^{+},b_i^{-}] - (-1)^{|a_i|\cdot |b_i|}[b_i^{+},a_i^{-}]),c^{+}] \\
&=2 \sum _i (\{ a_i,b_i,c \} ^{+} - (-1)^{|a_i|\cdot |b_i|} \{ b_i,a_i,c \} ^{+} ) \\
&= -4(-1)^{|c|(|a_i|+|b_i|)}(c\sum _i D(a_i,b_i))^{+} = 0.
    \end{aligned}$$

    Similarly,
    $$[\sum _i ([a_i^{+},b_i^{-}] - (-1)^{|a_i|\cdot |b_i|}[b_i^{+},a_i^{-}])),J^{-}]=(0).$$

    We showed that
    $$\mu (B^2(L)\cap C_0^2(L)) \subseteq B(J).$$

    Now let $\delta : \operatorname{Inder}(J)\to F$ be a linear functional. We will show that for arbitrary elements $a_i,b_i\in J$, $\sum _i [a_i^{+},b_i^{-}]=0$ implies $\sum _i D(a_i,b_i)=0$.

\vspace{0.8em}

    For an arbitrary element $x\in J$, we have $\{ x,a,b \} = (xa)b + x(ab) - (-1)^{|a|\cdot |b|}(xb)a = x(D(a,b)+R(ab))$. Hence $$[x^{-},\sum _i ([a_i^{+},b_i^{-}]]=2(x(\sum _i D(a_i,b_i) + R(\sum _i a_ib_i)))^{-},$$ which implies $$\sum _i D(a_i,b_i) + R(\sum _i a_ib_i) = 0.$$

    Applying the left hand side operator to $x=e$, we will get $\sum _i a_ib_i = 0$, and therefore, $\sum _i D(a_i,b_i) = 0$.

\vspace{0.8em}

    Define the linear functional $\lambda : L\to F$ as follows: $$\lambda (J^{+}) = \lambda (J^{-}) = (0),\quad \lambda ([a_i^{+},b_i^{-}])=\delta (D(a_i^{+},b_i^{-})).$$
    The cocycle corresponding to $\lambda$ lies in $C_0^2(L)$ and $\mu (\lambda )=\delta$.
\end{proof}

We proved the following theorem.
\begin{theorem}
    $H^2(L)\cong HC(J)$.
\end{theorem}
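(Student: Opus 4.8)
The plan is to assemble the theorem from the machinery already in place, since all of the substantive work is contained in the preceding lemmas; what remains is a formal argument using the isomorphism theorems for vector spaces. First I would record the three facts we may now take for granted: the decomposition $C^2(L) = C_0^2(L) + B^2(L)$ established just before Lemma 2; the bijectivity of the map $\mu \colon C_0^2(L) \to C(J)$ from Lemma 5; and the identity $\mu\bigl(B^2(L) \cap C_0^2(L)\bigr) = B(J)$ from Lemma 6.

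Next I would consider the composite linear map
$$C_0^2(L) \xrightarrow{\ \mu\ } C(J) \xrightarrow{\ \pi\ } C(J)/B(J) = HC(J),$$
where $\pi$ is the canonical projection. Since $\mu$ is surjective (being bijective) and $\pi$ is surjective, the composite $\pi \circ \mu$ maps onto $HC(J)$. Its kernel consists of those $\varphi \in C_0^2(L)$ with $\mu(\varphi) \in B(J)$; because $\mu$ is injective, Lemma 6 identifies this kernel precisely with $B^2(L) \cap C_0^2(L)$. Hence the first isomorphism theorem yields
$$C_0^2(L)\big/\bigl(B^2(L) \cap C_0^2(L)\bigr) \cong HC(J).$$

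Finally I would connect the left-hand side to $H^2(L)$ through the second isomorphism theorem: for the subspaces $C_0^2(L)$ and $B^2(L)$ of $C^2(L)$ one has
$$C_0^2(L)\big/\bigl(C_0^2(L) \cap B^2(L)\bigr) \cong \bigl(C_0^2(L) + B^2(L)\bigr)\big/B^2(L).$$
Invoking the decomposition $C^2(L) = C_0^2(L) + B^2(L)$, the right-hand quotient equals $C^2(L)/B^2(L) = H^2(L)$. Chaining the two isomorphisms gives $H^2(L) \cong HC(J)$.

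Because the theorem reduces to diagram chasing, I do not anticipate a genuine obstacle at this stage; the difficulty was front-loaded into the lemmas---most notably the verification in Lemma 5 that the cyclic cocycle identity \eqref{eq4} forces the cocycle relation \eqref{eq1}, and the two-sided argument of Lemma 6 showing that a relation $\sum_i D(a_i,b_i)=0$ among inner derivations is equivalent to the corresponding element $\sum_i\bigl([a_i^{+},b_i^{-}] - (-1)^{|a_i|\cdot|b_i|}[b_i^{+},a_i^{-}]\bigr)$ of $L$ being central. The only point requiring care here is to confirm that $\mu$, $\pi$, and the coboundary construction are genuinely $F$-linear, so that the isomorphism theorems apply verbatim; this is immediate from their definitions.
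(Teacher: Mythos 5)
Your proposal is correct and is essentially the paper's own argument: the paper simply states ``We proved the following theorem'' after Lemma 6, the implicit proof being exactly the assembly you describe --- the decomposition $C^2(L)=C_0^2(L)+B^2(L)$, the bijectivity of $\mu$ from Lemma 5, and $\mu(B^2(L)\cap C_0^2(L))=B(J)$ from Lemma 6, chained via the isomorphism theorems. Your write-up just makes the routine diagram chase explicit.
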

\begin{example}
    Let $J$ be a finite dimensional simple Jordan algebra with the trace $\operatorname{tr}: J\to F$ (see \cite{4Jacobson1968StructureJA}). Let $F[t,t^{-1}]$ be the algebra of Laurent polynomials, let $\operatorname{Res}f(t)$ denote the coefficient of the polynomial $f(t)$ at $t^{-1}$.
    Then,
		$$(J\otimes F[t,t^{-1}])\times (J\otimes F[t,t^{-1}]) \to F,\; (af(t)\; |\; bg(t))=\operatorname{tr}(ab)\operatorname{Res}(f'(t)g(t)),$$ 
		$a,b\in J$, is a cyclic cocycle on the Jordan algebra $J\otimes F[t,t^{-1}]$ and 
		$$\dim _F HC(J\otimes F[t,t^{-1}]) = 1.$$
\end{example}

\section{Brackets}
\label{brackets}

Let $A=A_{\bar{0}}+ A_{\bar{1}}$ be an associative commutative superalgebra. A binary bilinear operation $[\; ,\; ]:A\times A\to A$ is called a \textit{Poisson bracket} if
\begin{enumerate}
    \item $(A,[\; ,\; ])$ is a Lie superalgebra,
    \item $[ab,c]=a[b,c]+(-1)^{|b|\cdot |c|}[a,c]b$.
\end{enumerate}
\begin{example}
    Let $A=F[p_1,\dots ,p_n,q_1,\dots ,q_n]$ be a polynomial algebra in $2n$ variables. Then $$[f,g]=\sum _{i=1}^{n}(\frac{\partial f}{\partial p_i}\frac{\partial g}{\partial q_i} -\frac{\partial f}{\partial q_i} \frac{\partial g}{\partial p_i} )$$ is a Poisson bracket in $A$.
\end{example}
\begin{example}
    Let $G(n)$ be the Grassmann algebra on an $n$-dimensional vector space $V$. Let $\xi _1,\dots ,\xi _n$ be a basis of $V$. The bracket $[\xi _i,\xi _j]=\delta _{ij},\; 1\leq i,j\leq n$, uniquely extends to a Poisson bracket on the superalgebra $G(n)=G(n)_{\bar{0}}+ G(n)_{\bar{1}}$.
\end{example}

An associative commutative superalgebra with a Poisson bracket is called a \textit{Poisson superalgebra}.

\vspace{0.8em}

The Poisson superalgebra of Example 2 is denoted as $H_n$.

\vspace{0.8em}

Given two Poisson superalgebras $A,B$, their tensor product is again a Poisson superalgebra: $$[a_1\otimes b_1, a_2\otimes b_2]=(-1)^{|b_1|\cdot |a_2|}([a_1,a_2]\otimes b_1b_2 + a_1a_2\otimes [b_1,b_2]).$$

I. Kantor \cite{7Kantor1990ConnectPJL} noticed that if $A$ is a Poisson superalgebra then the vector space $J=A+Av$ with the operation that extends the multiplication in $A$ and $a(bv)=abv,\; (bv)a=(-1)^{|a|}bav,\; (av)(bv)=(-1)^{|b|}[a,b];\; a,b\in A_{\bar{0}}\cup A_{\bar{1}}$, is a Jordan superalgebra.
We call it the \textit{Kantor double} of the bracket $[\; ,\; ]$ and denote it as $K(A, [\; ,\; ])$ or simply $K(A)$.

\vspace{0.8em}

There exists, however, non-Poisson brackets whose Kantor doubles are Jordan superalgebras. We call such brackets \textit{Jordan brackets}.
D. King and K. McCrimmon \cite{8Daniel1992TheKC} characterized Jordan brackets in terms of identities.

\begin{example}
    Let $A$ be an associative commutative superalgebra with an even derivation $D$. Then the bracket $[a,b]=D(a)b-aD(b)$ is Jordan, though not Poisson. We say that $[a,b]$ is a bracket of \textit{vector type}.
\end{example}
We notice that if $A$ is an associative commutative superalgebra with the identity element $1$ and $[\; ,\; ]$ is a Jordan bracket on $A$, then $a'=[a,1]$ is an even derivation on $A$. For arbitrary elements $a,b,c\in A_{\bar{0}}\cup A_{\bar{1}}$, we have $$[ab,c]=a[b,c]+(-1)^{|b|\cdot |c|}[a,c]b+abc' .$$
\begin{lemma}
    Let $A=A_{\bar{0}}+ A_{\bar{1}}$ be an associative commutative superalgebra with a Jordan bracket. Then there exists a unique Jordan bracket on $A\otimes G(n)$ that
    \begin{enumerate}
        \item extends the Jordan bracket on $A$,
        \item extends the Poisson bracket (see Example 3) on $G(n)$,
        \item $[\xi _1\cdots\xi _k,a]=(k-1)\xi _1\cdots\xi _k a'$ for an arbitrary element $a\in A$.
    \end{enumerate}
\end{lemma}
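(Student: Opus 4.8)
The plan is to separate uniqueness from existence and to use the former as a template for the latter. For uniqueness, I would first pin down the even derivation $D=[\,\cdot\,,1\otimes 1]$ that any such bracket must carry. Conditions (1) and (2) force $D(a\otimes 1)=a'\otimes 1$ and $D(1\otimes\xi_i)=1\otimes[\xi_i,1]=0$, so, being an even derivation of the commutative associative superalgebra $A\otimes G(n)$, it is determined everywhere: $D(a\otimes u)=a'\otimes u$. Since $A\otimes G(n)$ is generated as a unital associative algebra by $A\otimes 1$ and the $1\otimes\xi_i$, and the bracket of generators is completely prescribed — $[a\otimes 1,b\otimes 1]=[a,b]\otimes 1$ by (1), $[1\otimes\xi_i,1\otimes\xi_j]=\delta_{ij}\,1\otimes 1$ by (2), and $[1\otimes\xi_i,a\otimes 1]=0$ by (3) (the case $k=1$ has coefficient $k-1=0$) — the corrected Leibniz rule $[xy,z]=x[y,z]+(-1)^{|y|\cdot|z|}[x,z]y+xyz'$, together with its mirror in the second argument obtained from skew-symmetry, reduces the bracket of any two monomials to brackets of generators, the derivation $z'$ being already known. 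Hence at most one such bracket exists.

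It is worth isolating the mechanism behind condition (3), since it both guides the construction and explains why (3) is consistent with (1)--(2). Writing $\xi_1\cdots\xi_k=(\xi_1\cdots\xi_{k-1})\xi_k$ and applying the Leibniz rule, with $[\xi_k,a]=0$ and $D(a)=a'$, the derivation term contributes exactly one factor of $a'$ at each stage; an induction on $k$ then yields $[\xi_1\cdots\xi_k,a]=(k-1)\xi_1\cdots\xi_k a'$. Thus once the bracket is prescribed on generators with $[\xi_i,a]=0$, condition (3) for all $k\ge 1$ is automatic, and the coefficient is forced to be precisely $k-1$.

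For existence I would construct the bracket by induction on $n$, adjoining the odd variables one at a time in a fixed order. The base step is a lemma: given a commutative associative superalgebra $B$ with a Jordan bracket, the space $B\oplus B\xi$ carries a Jordan bracket extending that of $B$ with $[\xi,\xi]=1$ and $[\xi,B]=0$. Applying this with $B=A\otimes G(m-1)$ and $\xi=\xi_m$ for $m=1,\dots,n$ (note $G(n)\cong G(1)^{\otimes n}$ as Poisson superalgebras, and each new variable indeed satisfies $[\xi_m,\xi_i]=\delta_{mi}$ and $[\xi_m,\,A\otimes G(m-1)]=0$) builds a bracket on $A\otimes G(n)$. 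I would then verify directly that it satisfies (1), (2) and (3): (1) and (2) hold by construction at each step, and (3) holds by the computation of the previous paragraph. Uniqueness, already established, shows that this bracket is the one sought and, in particular, does not depend on the chosen order of adjunction.

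The main obstacle, and the actual content, is to prove that each extended bracket is genuinely Jordan, i.e. that it satisfies the King--McCrimmon identities (equivalently, that the Kantor double $K(B\oplus B\xi)$ is a Jordan superalgebra). Super skew-symmetry and the corrected Leibniz rule persist routinely, so the crux is the Jacobi-type identity. I would verify it on the spanning set $\{\,b,\ c\xi : b,c\in B\,\}$, grading the computation by the number of factors of $\xi$ that occur: the $\xi$-free part reduces to the identity already satisfied by $B$; the part linear in $\xi$ couples that identity with the derivation $D_B$; and the quadratic part is where the relation $[\xi,\xi]=1$ feeds back, producing products $bc$ that must cancel against the derivation corrections. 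Keeping track of the Koszul signs, the $\delta_{ij}$ coming from the Grassmann bracket, and the correction terms $a'b,\ ab'$ already visible in
\[
[a\otimes\xi_i,\,b\otimes\xi_j]=(-1)^{|b|}\bigl(\delta_{ij}\,ab\otimes 1+([a,b]-a'b+ab')\otimes\xi_i\xi_j\bigr)
\]
is the delicate bookkeeping; this is also where the standing hypothesis on the characteristic of $F$ is used.
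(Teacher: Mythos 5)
Your plan is correct and amounts to a carefully organized version of what the paper dismisses with the single line ``Straightforward computation'': uniqueness via the corrected Leibniz rule $[ab,c]=a[b,c]+(-1)^{|b||c|}[a,c]b+abc'$ on generators, existence by adjoining one Grassmann variable at a time, and your key formula
$[a\otimes\xi_i,\,b\otimes\xi_j]=(-1)^{|b|}\bigl(\delta_{ij}\,ab\otimes 1+([a,b]-a'b+ab')\otimes\xi_i\xi_j\bigr)$
checks out, as does your observation that condition (3) is forced by $[\xi_i,a]=0$ together with the Leibniz rule. The one piece you leave unexecuted --- the actual verification of the King--McCrimmon identities for the single-variable extension $B\mapsto B\oplus B\xi$ --- is precisely the computation the paper also omits, so your outline neither conflicts with nor falls short of the paper's argument.
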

\begin{proof}
    Straightforward computation.
\end{proof}

\begin{definition}
    A binary bilinear product $[\; ,\; ] : A\times A\to A$ is called a \textit{contact bracket} if
    \begin{enumerate}
        \item $(A, [\; ,\; ])$ is a Lie superalgebra,
        \item the linear transformation $D:a\mapsto [a,1], a\in A$, is an even derivation of $A$,
        \item $[ab,c] = a[b,c] + (-1)^{|b|\cdot |c|}[a,c]b + ab D(c)$ for arbitrary elements $a,b,c\in A_{\bar{0}}\cup A_{\bar{1}}$.
    \end{enumerate}
\end{definition}

N. Cantarini and V. Kac \cite{2Cantarini2007Classification} noticed that Jordan brackets are in 1-1 correspondence with contact brackets. More precisely, if $[a,b]$ is a contact bracket with derivation $D(a)=[a,1]$, then $\left \langle a,b \right \rangle = [a,b]-\frac{1}{2}(D(a)b-aD(b))$ is a Jordan bracket. Even derivations corresponding to the brackets $[\; ,\; ],\left \langle \; ,\; \right \rangle $ are different: $\left \langle a,1 \right \rangle = \frac{1}{2}[a,1]$.

An associative commutative superalgebra $A$ with a contact bracket is called a \textit{contact algebra}.

Now we are ready to formulate the analog of Lemma 7 for contact algebras.
\begin{lemma}
    Let $A$ be a contact algebra. Then there exists a unique contact bracket on $A\otimes G(n)$ that
    \begin{enumerate}
        \item extends the contact bracket on $A$,
        \item extends the Poisson bracket on $G(n)$,
        \item $[\xi _1\cdots\xi _k,a]=\frac{k-2}{2}\xi _1\cdots\xi _k a'$ for an arbitrary element $a\in A$.
    \end{enumerate}
\end{lemma}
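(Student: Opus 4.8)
The plan is to deduce the statement from Lemma~7 by transporting everything through the correspondence between contact and Jordan brackets recalled above (after Cantarini and Kac). Throughout I regard $A$ and $G(n)$ as subalgebras of $A\otimes G(n)$ via $a\mapsto a\otimes 1$ and $g\mapsto 1\otimes g$, so that $\xi_1\cdots\xi_k$ denotes $1\otimes\xi_1\cdots\xi_k$ and $1=1_A\otimes 1_{G(n)}$. Write $[\,,\,]$ for the contact bracket on $A$, put $D(a)=[a,1]$ and $a'=D(a)$. The associated Jordan bracket $\langle a,b\rangle=[a,b]-\tfrac12(D(a)b-aD(b))$ has derivation $\langle a,1\rangle=\tfrac12 a'$. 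Applying Lemma~7 to the Jordan algebra $(A,\langle\,,\,\rangle)$ yields a \emph{unique} Jordan bracket, still written $\langle\,,\,\rangle$, on $A\otimes G(n)$ extending $\langle\,,\,\rangle$ on $A$ and the Poisson bracket on $G(n)$, with $\langle\xi_1\cdots\xi_k,a\rangle=(k-1)\xi_1\cdots\xi_k\langle a,1\rangle=\tfrac{k-1}{2}\xi_1\cdots\xi_k a'$ for $a\in A$. I would then define the candidate contact bracket on $A\otimes G(n)$ by inverting the Cantarini--Kac correspondence: set $\mathcal D(u)=2\langle u,1\rangle$ and $[u,v]=\langle u,v\rangle+\tfrac12(\mathcal D(u)v-u\mathcal D(v))$. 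Because that correspondence is a bijection, this formula is exactly its inverse and so produces a genuine contact bracket, whose derivation is $\mathcal D$.

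It remains to check the three normalizations. For $a\in A$ one has $\mathcal D(a)=2\langle a,1\rangle=a'$, so the inverse formula restricted to $A$ returns the original contact bracket, which is (1). On $G(n)$ the Poisson derivation vanishes, i.e. $\langle g,1\rangle=0$, hence $\mathcal D(g)=0$ and the inverse formula restricted to $G(n)$ returns the Poisson bracket, which is (2). For (3) I first note $\mathcal D(\xi_1\cdots\xi_k)=2\langle\xi_1\cdots\xi_k,1\rangle=0$, since $\xi_1\cdots\xi_k\in G(n)$ and $\langle\cdot,1\rangle$ vanishes there; then, using $\mathcal D(a)=a'$,
\[
[\xi_1\cdots\xi_k,a]=\tfrac{k-1}{2}\xi_1\cdots\xi_k a'-\tfrac12\xi_1\cdots\xi_k a'=\tfrac{k-2}{2}\xi_1\cdots\xi_k a',
\]
which is precisely (3). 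The shift from the coefficient $(k-1)$ of Lemma~7 to $\tfrac{k-2}{2}$ here is produced entirely by the factor $\tfrac12$ relating the two derivations together with the $-\tfrac12\xi_1\cdots\xi_k a'$ correction term.

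For uniqueness, suppose $[\,,\,]_1,[\,,\,]_2$ are contact brackets on $A\otimes G(n)$ satisfying (1)--(3), with derivations $\mathcal D_i(u)=[u,1]_i$. Passing each through the forward correspondence gives Jordan brackets $\langle\,,\,\rangle_i$, and I would verify—running the computations above backwards—that both satisfy the hypotheses of Lemma~7: one uses $\mathcal D_i|_A=D$ from (1), $\mathcal D_i|_{G(n)}=0$ from (2), and $\mathcal D_i(\xi_1\cdots\xi_k)=0$ together with (3) to recover the coefficient $(k-1)$. By the uniqueness part of Lemma~7, $\langle\,,\,\rangle_1=\langle\,,\,\rangle_2$; since both the derivation and the bracket are recovered from the Jordan bracket via $\mathcal D_i=2\langle\cdot,1\rangle_i$ and the inverse formula, this forces $[\,,\,]_1=[\,,\,]_2$. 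The only step requiring real care—and the one I expect to be the main obstacle—is the bookkeeping of the factor $\tfrac12$ between the contact derivation $D$ and the Jordan derivation $\langle\cdot,1\rangle$, since it is this factor alone that converts the coefficient of Lemma~7 into the one demanded here; the super-sign versions of the two transition formulas must likewise be carried consistently, though, as elsewhere in the paper, this presents no essential difficulty.
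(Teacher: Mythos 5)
Your argument is correct and is essentially the intended one: the paper states this lemma without proof, introducing it as ``the analog of Lemma 7'' immediately after recalling the Cantarini--Kac correspondence, and your reduction through that correspondence --- with the coefficient shift from $(k-1)$ to $\frac{k-2}{2}$ accounted for by $\langle a,1\rangle=\frac12[a,1]$ together with the $-\frac12\,\xi_1\cdots\xi_k a'$ correction term, and uniqueness pulled back from the uniqueness in Lemma 7 --- checks out. The only input you use beyond Lemma 7 is the asserted bijectivity of the contact/Jordan bracket correspondence (the paper writes only the forward formula), which is precisely what guarantees that your inverse formula $[u,v]=\langle u,v\rangle+\frac12(\mathcal{D}(u)v-u\mathcal{D}(v))$ with $\mathcal{D}=2\langle\cdot,1\rangle$ returns a genuine contact bracket.
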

\begin{remark}
    The lemma above is a partial answer to the Question 1 from \cite{11Zelmanov2019BSA}.
\end{remark}

Let $A$ be an associative commutative superalgebra with a contact bracket $[a,b]$. Following N. Cantarini and V. Kac \cite{2Cantarini2007Classification}, we define the Jordan bracket $\left \langle a,b \right \rangle = [a,b]-\frac{1}{2}(D(a)b-aD(b))$. Extend the Jordan bracket $\left \langle a,b \right \rangle$ to a Jordan bracket on $A\otimes G(n)$ as in Lemma 7. Extend the contact bracket $[a,b]$ to a contact bracket on $A\otimes G(n+3)$ as in Lemma 8.
Let $L=(A\otimes G(n+3),[\; ,\; ])$.

\vspace{0.8em}

Then (refn. \cite[Chapter 6]{2Cantarini2007Classification}) $$\operatorname{TKK}(K(A\otimes G(n),\left \langle \; ,\; \right \rangle ))\cong [L,L] .$$

\section{Cyclic homology of Kantor doubles}
\label{CHofKantorDoubles}
Let $A$ be an associative commutative superalgebra with a Jordan bracket $[\; ,\; ]$. Let $J=K(A, [\; ,\; ])$ be the Kantor double, $J=A+Av,\; J_{\bar{0}}=A_{\bar{0}}+ A_{1}v,\; J_{\bar{1}}=A_{\bar{1}}+ A_{0}v$.

\subsection{Poisson centers}
\label{Poissoncenters}

\vspace{0.5em}

Let $A$ be an associative commutative superalgebra with a Jordan bracket $[x,y]$. We define the \textit{Poisson center} of $A$ as $Z_p=\{ u\in A\; |\; u'=0, (cu)'+[c,u]=0, \forall c\in A \}$.
If $[x,y]$ is a Poisson bracket then $Z_p$ is the well known Poisson center.

\vspace{0.7em}

Let $J = A + Av$ the Kantor double Jordan algebra and $\lambda : A\to F$ be a linear functional. Define a bilinear mapping $J\times J\to F,\;\; x\times y\mapsto (x\; |\; y)_{\lambda }$ via $(A\; |\; A)_{\lambda }=(0)$,$(Av\; |\; A)_{\lambda }=(0)$, $(av\; |\; bv)_{\lambda }=(-1)^{|b|}\lambda (ab);\; a,b\in A$.

\begin{lemma}
(1) For an arbitrary linear functional $\lambda\in A^{*}$, $(x\; |\; y)_{\lambda }$ is a cyclic cocycle of $J$;
\\
(2) $(x\; |\; y)_{\lambda }$ is a coboundary if and only if $\lambda (Z_p)=(0)$.
\end{lemma}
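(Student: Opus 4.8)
We have a Jordan bracket on an associative commutative superalgebra $A$, and $J = K(A) = A + Av$ is the Kantor double. For a linear functional $\lambda$ on $A$, we define the bilinear form $(x|y)_\lambda$ on $J$ by declaring it zero unless both arguments lie in $Av$, where $(av|bv)_\lambda = (-1)^{|b|}\lambda(ab)$. Part (1) says this is always a cyclic cocycle; part (2) characterizes when it is a coboundary in terms of the Poisson center $Z_p = \{u : u' = 0,\ (cu)' + [c,u] = 0\ \forall c\}$.

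Let me work through the structure.

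**Part (1): cyclic cocycle.** A cyclic cocycle must be super skew-symmetric and satisfy the identity (eq4): $(ab|c) + (-1)^{|a|(|b|+|c|)}(bc|a) + (-1)^{|b||c|}(ac|b) = 0$. I'd first verify skew-symmetry on $Av$: $(av|bv)_\lambda = (-1)^{|b|}\lambda(ab)$ and $(bv|av)_\lambda = (-1)^{|a|}\lambda(ba) = (-1)^{|a|}\lambda(ab)$ (using commutativity of $A$), and then check the sign $(-1)^{|av|\cdot|bv|}$ works out — here $|av| = |a|+1$, so one tracks the parity bookkeeping carefully. For the cyclic identity (eq4), the key observation is that the product of three elements of $J$ lands back in a position where $(\cdot|\cdot)_\lambda$ can be nonzero only in restricted parity patterns. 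The main computation is to expand (eq4) for the various cases of arguments in $A$ versus $Av$, using the Kantor double multiplication rules $a(bv) = (ab)v$, $(av)(bv) = (-1)^{|b|}[a,b]$, etc. The only nontrivial case will be when exactly the right combination produces products in $Av$; the identity should then reduce to the Jacobi/Leibniz properties of the Jordan bracket together with the definition $a' = [a,1]$.

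**Part (2): coboundary iff $\lambda(Z_p) = 0$.** A cyclic coboundary has the form $(a|b) = \mu(D(a,b))$ for a functional $\mu$ on $\operatorname{Inder}(J)$, where $D(a,b) = R(a)R(b) - (-1)^{|a||b|}R(b)R(a)$. So I'd compute the inner derivations $D(av, bv)$ acting on $J$ and express the pairing. The plan is: the form $(x|y)_\lambda$ is a coboundary precisely when it factors through the map $J \times J \to \operatorname{Inder}(J)$, $(x,y)\mapsto D(x,y)$. Since $(x|y)_\lambda$ is supported on $Av \times Av$, I need to understand the span of relations $\sum_i D(a_iv, b_iv)$ that vanish, and show that $(x|y)_\lambda$ annihilates all such relations iff $\lambda$ kills $Z_p$. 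Concretely, I expect that $\sum_i D(a_iv,b_iv) = 0$ forces certain elements built from $a_i, b_i$ via the bracket and the derivation $'$ to lie in the ideal that $\lambda$ must respect, and that the obstruction to writing $(x|y)_\lambda$ as a coboundary is exactly a pairing with $Z_p$. The forward direction (coboundary $\Rightarrow \lambda(Z_p)=0$) should follow by evaluating a coboundary on a carefully chosen element associated to $u \in Z_p$; the reverse direction requires constructing the functional $\mu$ on $\operatorname{Inder}(J)$ and checking it is well-defined, which is where the two defining conditions of $Z_p$ (namely $u' = 0$ and $(cu)' + [c,u] = 0$) must appear.

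**Main obstacle.** The hard part will be Part (2), specifically computing $D(av, bv)$ explicitly and identifying the precise relations among these inner derivations. The subtlety is that $D(av,bv)$ mixes the multiplication on $A$, the bracket $[\,,\,]$, and the derivation $a' = [a,1]$ in a way governed by the Kantor double rules, so disentangling which linear combinations vanish — and matching the resulting conditions exactly to the two equations defining $Z_p$ — is the computational and conceptual crux. I'd expect the Poisson center conditions to emerge as precisely the kernel/annihilator conditions making the would-be coboundary functional well-defined, so the proof hinges on showing the correspondence between relations $\sum_i D(a_iv,b_iv)=0$ and the element $\sum_i a_ib_i$ (or a related expression) lying in a space dual to $Z_p$.
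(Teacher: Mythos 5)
Your outline identifies the correct strategy, and it is the same strategy the paper uses: part (1) is a routine case check (the paper simply calls it straightforward), and part (2) hinges on computing the inner derivations $D(av,bv)$ and characterizing exactly when a sum $\sum_i D(a_iv,b_iv)$ vanishes. However, as written the proposal stops at the point where the actual proof begins. The entire content of part (2) is the explicit computation you defer: one must show, for $c\in A$ and for the element $v$, that
\begin{equation*}
cD(av,bv)=(-1)^{|b|}\bigl([c,ab]+(cab)'\bigr),
\qquad
vD(av,bv)=-(-1)^{|b|}v\,(ab)',
\end{equation*}
using the Kantor double multiplication together with the modified Leibniz rule $[ab,c]=a[b,c]+(-1)^{|b||c|}[a,c]b+abc'$ for Jordan brackets. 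Since a derivation of $J=A+Av$ vanishes iff it kills $A$ and $v$, these two formulas give precisely the equivalence
$\sum_i D(a_iv,b_iv)=0 \iff \sum_i(-1)^{|b_i|}a_ib_i\in Z_p$,
with the two defining conditions of $Z_p$ (namely $u'=0$ and $(cu)'+[c,u]=0$) coming from the $v$-part and the $A$-part respectively. Your phrase ``lying in a space dual to $Z_p$'' is off: the criterion is membership of the (signed) element $\sum_i(-1)^{|b_i|}a_ib_i$ in $Z_p$ itself, not in any dual space, and the signs $(-1)^{|b_i|}$ matter for matching the definition $(av|bv)_\lambda=(-1)^{|b|}\lambda(ab)$.

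Once that equivalence is in hand, both directions are short, and your sketch of them is essentially right: for the forward direction the ``carefully chosen element'' is $D(uv,v)$ for $u\in Z_p$, which is $0$ by the criterion, so a coboundary forces $\lambda(u)=(uv|v)_\lambda=\mu(0)=0$; for the converse one defines $\mu(D(av,bv))=(-1)^{|b|}\lambda(ab)$, checks well-definedness via the same criterion plus $\lambda(Z_p)=0$, and extends $\mu$ by zero on $D(A,A)$ and $D(A,Av)$. So the gap is not a wrong idea but a missing computation: without the two displayed formulas above, the correspondence between vanishing sums of inner derivations and the Poisson center is asserted rather than proved, and that correspondence is the lemma.
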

\begin{proof}
    The assertion (1) is straightforward.
    Let us check the assertion (2).
    
\vspace{0.8em}

Choose elements $a,b \in A_{\bar{0}} \cup A_{\bar{1}}$ and consider the inner derivation $D(av,bv)=R(av)R(bv)-(-1)^{(|a|+1)(|b|+1)}R(bv)R(av)$ of the Jordan superalgebra $J=\operatorname{Kan}(A)$. For an arbitrary element $c\in A$ we have
    $$\begin{aligned}
    cD(av,bv) &= cav\cdot bv + (-1)^{|a||b|+|a|+|b|}cbv\cdot av \\
&= (-1)^{|b|}[ca,b]+(-1)^{|a||b|+|b|}[cb,a] \\
&= (-1)^{|b|+|a||b|}[c,b]a+(-1)^{|b|}c[a,b]+(-1)^{|b|}cab'\\ &\hskip .4cm +(-1)^{|b|}[c,a]b+(-1)^{|b|+|a||b|}c[b,a] +(-1)^{|a||b|+|b|}cba' \\
&=(-1)^{|b|}((-1)^{|a||b|}[c,b]a+[c,a]b)+(-1)^{|b|}c(ab)' . 
    \end{aligned}$$
    On the other hand, $$[c,ab]=[c,a]b+(-1)^{|a||b|}[c,b]a-c'ab.$$
    Hence $$cD(av,bv)=(-1)^{|b|}([c,ab]+(cab)') .$$
    Furthermore, $$vD(av,bv)=(-1)^{|a|}[1,a]bv+(-1)^{|a||b|+|a|}[1,b]av=(-1)^{|a|}[1,ab]v=-(-1)^{|b|}v(ab)'.$$
    This implies that $\sum _i D(a_iv,b_iv)=0$ if and only if $\sum _i (-1)^{|b_i|}a_ib_i\in Z_p$.

\vspace{0.8em}

   Suppose that a cyclic cocycle $(x\; |\; y)_{\lambda }$ is a coboundary. Then there exists a linear functional $\mu : \operatorname{Inder}(J)\to F$, such that $(x\; |\; y)_{\lambda } = \mu (D(x,y))$. Let $u\in Z_p$. Then $D(uv,v)=0$, which implies $(uv\; |\; v)_{\lambda }=\lambda (u)=\mu (0) = 0$. We proved that $\lambda (Z_p)=(0)$.

\vspace{0.8em}

    Conversely, suppose that $\lambda (Z_p)=(0)$. Define $\mu : D(Av,Av) \to F$ via $\mu (D(av,bv))=(-1)^{|b|}\lambda (ab)$. To show that this mapping is well-defined, we need to verify that $\sum _i D(a_iv,b_iv)=0$ implies $\sum _i(-1)^{|b_i|}\lambda (a_ib_i)=0$. But we know that $\sum _i D(a_iv,b_iv)=0$ if and only if  $\sum _i (-1)^{|b_i|} a_ib_i \in Z_p$.  And  $\sum _i (-1)^{|b_i|}  a_ib_i \in Z_p$ implies that $\sum _i(-1)^{|b_i|}\lambda (a_ib_i)=0$ by our assumption.

\vspace{0.8em}

    Extend $\mu$ to a mapping $\operatorname{Inder}(J)\to F$ via $\mu (D(A,A))=\mu (D(Av,A)) = (0)$. Hence the cocycle  $(x|y)_{\lambda}$ is a coboundary. This completes the proof of the lemma.
\end{proof}

\subsection{Bracket cyclic cocycles}
\label{BracketCyclicCocycle}

\vspace{0.5em}

Let $(x\; |\; y)$ be a cyclic cocycle on $J=\operatorname{Kan}(A)$. Consider the linear functional $\lambda : A\to F$ defined via $\lambda (a)=(av\; |\; v)$. Now,
\begin{equation}
\label{eq7}
    (av\; |\; bv) = (a\;|\; v\cdot bv) - (-1)^{|a|}(v\; |\; a\cdot bv) = (-1)^{|b|+1}(a\; |\; b') + (-1)^{|b|+1}\lambda (ab).
\end{equation}

It follows that
\begin{equation}
\label{eq8}
    (a'\; |\; b) = (-1)^{|a|\cdot |b|}(b'\; |\; a).
\end{equation}

Let us explore the cocycle condition for $av,bv,c;\; a,b,c\in A_{\bar{0}}\cup A_{\bar{1}}$:
$$(av\cdot bv\; |\; c) + (-1)^{(|a|+1)(|b|+|c|+1)}(bv\cdot c\; |\; av) + (-1)^{|c|(|a|+|b|)}(c\cdot av\; |\; bv) = 0.$$

In view of \eqref{eq7}, it is equivalent to
\begin{equation}
\label{eq9}
    ([a,b]\; |\; c) = (a'\; |\; bc) - (-1)^{|a|\cdot |b|}(b'\; |\; ac).
\end{equation}

\begin{definition}
    We call a cyclic cocycle $(x\; |\; y)$ a \textit{bracket cyclic cocycle} if \eqref{eq8} and \eqref{eq9} hold.
\end{definition}

\begin{example}
    $(f\; |\; g) = \operatorname{Res}f'g$ is a bracket cyclic cocycle on $F[t,t^{-1}]$ relative to the bracket $[f,g]=f'g-fg'$.
\end{example}

We showed that if $(x\; |\; y)$ is a cyclic cocycle on $\operatorname{Kan}(A)$, then \begin{enumerate}
    \item the restriction of $(x\; |\; y)$ to $A\times A$ is a bracket cyclic cocycle,
    \item $(av\; |\; bv) = (-1)^{|b|+1}(a\; |\; b') -(a\; |\; b)_{\lambda }.$
\end{enumerate}

\begin{lemma}
    Let $(x\; |\; y)$ be a bracket cyclic cocycle on $A$; let $\lambda\in A^{*}, J=\operatorname{Kan}(A)=A+Av$. Then the super skew-symmetric mapping $J\times J\to F$ that
    \begin{enumerate}
        \item extends $(x\; |\; y)$ on $A\times A$,
        \item $(av\; |\; bv) = (-1)^{|b|+1}(a\; |\; b') -(a\; |\; b)_{\lambda }$,
        \item $(Av\; |\; A)=(0)$,
    \end{enumerate}
    is a cyclic cocycle on $J$.
\end{lemma}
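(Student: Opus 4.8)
The plan is to check directly that the form $(\;|\;)$ specified by (1)--(3) is super skew-symmetric and obeys the cyclic cocycle identity \eqref{eq4} on $J=A+Av$; by the definition of a cyclic cocycle this is all that is required. Since the form is bilinear it suffices to test homogeneous arguments, each lying in $A$ or in $Av$, so every triple falls into one of four type patterns: $\{A,A,A\}$, $\{A,A,Av\}$, $\{A,Av,Av\}$, $\{Av,Av,Av\}$. I would first record a reduction: granting super skew-symmetry of $(\;|\;)$ and supercommutativity of $J$, the left-hand side of \eqref{eq4} changes only by a nonzero sign under a permutation of its three arguments. Indeed, for a triple $(a,b,c)$, transposing the last two arguments multiplies it by $(-1)^{|b|\cdot|c|}$ (use $cb=(-1)^{|b|\cdot|c|}bc$ together with the skew-symmetry of $(\;|\;)$), and transposing the first two multiplies it by $(-1)^{|a|\cdot|b|}$; as transpositions generate $S_3$, it is enough to verify \eqref{eq4} for one representative ordering of each pattern.

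Next I would settle well-definedness and super skew-symmetry. On $A\times A$ the form equals the bracket cyclic cocycle $(x\;|\;y)$, which is already super skew-symmetric, and the pairings $(A\;|\;Av)$, $(Av\;|\;A)$ vanish by (3); the only compatibility to verify is that (2) is consistent with super skew-symmetry, that is, $(av\;|\;bv)=-(-1)^{(|a|+1)(|b|+1)}(bv\;|\;av)$. Expanding both sides by (2) and using $\lambda(ba)=(-1)^{|a|\cdot|b|}\lambda(ab)$, this collapses to exactly \eqref{eq8}, which holds by hypothesis. Hence $(\;|\;)$ is a well-defined super skew-symmetric form.

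Three of the four patterns are then immediate. For $\{A,A,A\}$ all products remain in $A$ and all values are those of $(x\;|\;y)$, so \eqref{eq4} is precisely the cyclic cocycle identity for $(x\;|\;y)$ on $A$, which holds because a bracket cyclic cocycle is in particular a cyclic cocycle. For $\{A,A,Av\}$ and $\{Av,Av,Av\}$ the total number of $v$-factors among the three arguments is odd. In each of the three terms of \eqref{eq4} a product of two arguments is paired against the third; the product lies in $A$ or in $Av$ according as it contains an even or odd number of $v$-factors (using $(av)(bv)=(-1)^{|b|}[a,b]\in A$), and likewise for the third argument. Since the total $v$-count is odd, exactly one of the two paired elements lies in $Av$, so each term is a pairing $(A\;|\;Av)=(Av\;|\;A)=0$ and \eqref{eq4} reads $0=0$.

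The one substantive pattern is $\{A,Av,Av\}$, for which I take the representative $(av,bv,c)$ with $a,b,c\in A$. Here $(av)(bv)=(-1)^{|b|}[a,b]\in A$, whereas $(bv)c\in Av$ and $(av)c\in Av$, so \eqref{eq4} becomes a relation among one $(A\;|\;A)$-value and two $(Av\;|\;Av)$-values -- exactly the cocycle condition for $av,bv,c$ written out in Subsection \ref{BracketCyclicCocycle}. Substituting the defining formula (2), namely \eqref{eq7}, for the two $(Av\;|\;Av)$-terms and cancelling turns this condition, as shown there, into \eqref{eq9}; since $(x\;|\;y)$ is a bracket cyclic cocycle, \eqref{eq9} holds for all $a,b,c$, so \eqref{eq4} holds for this triple and, by the permutation reduction, for every triple of pattern $\{A,Av,Av\}$. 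The only real labor is the sign bookkeeping in this last pattern, but it is precisely the computation already carried out in the forward direction in Subsection \ref{BracketCyclicCocycle}, now read as an equivalence; combined with the three easy patterns and the skew-symmetry check, this proves the lemma.
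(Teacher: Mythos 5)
Your proof is correct and follows essentially the same route as the paper's: the paper's own proof is the terse observation that triples with all three elements in $A$ or exactly two in $Av$ reduce to the computations of Subsections 4.1--4.2 (the bracket-cocycle identity \eqref{eq9} via \eqref{eq7}, read as an equivalence), while triples with one or three elements in $Av$ vanish term by term because $(Av\;|\;A)=(0)$. Your additional checks (the permutation reduction and the verification that (2) is compatible with super skew-symmetry via \eqref{eq8}) are sound and merely make explicit what the paper leaves implicit.
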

\begin{proof}
    Choose three elements for the cocycle identity. If all three elements lie in $A$ or two elements of them lie in $Av$, then the identity follows from the computation above. If one or three elements lie in $Av$, then the identity follows from $(Av\; |\; A)=(0)$.
\end{proof}

Let $C_{br}(A)$ be the vector space of all bracket cyclic cocycles $A\times A\to F$.

\vspace{0.8em}

Each bracket cyclic cocycle on $A$ can be extended to a cyclic cocycle on $J$ via $(av\; |\; bv) = (-1)^{|b|}(a\; |\; b'), (Av\; |\; A)=(0)$. Hence $C_{br}(A)\subseteq C(J)$.

\vspace{0.8em}

For arbitrary elements $a,b\in A$, the inner derivation $D(a,b)$ of $J$ is zero. Hence $C_{br}(A)\cap B(J)=(0)$ and therefore $HC_{br}(A)=C_{br}(A)\subseteq HC(J)$.

\subsection{Mixed cocycles}
\label{MixedCocycle}

\vspace{0.5em}

\begin{definition}
    We call a cyclic cocycle $(x\; |\; y)$ on $J$ \textit{mixed} if $(A\; |\; A)=(Av\; |\; Av)=(0)$.
\end{definition}

\begin{lemma}
    Let $\left \langle \; |\; \right \rangle :A\times A\to F$ be a bilinear mapping. The mapping $(\; |\; ) : J\times J\to F, (a\; |\; bv) = \left \langle a\; |\; b\right \rangle , (bv\; |\; a) = -(-1)^{|a|(|b|+1)} \left \langle a\; |\; b\right \rangle , (A\; |\; A)=(Av\; |\; Av)=(0)$ is a cyclic cocycle if and only if

\vspace{0.8em}

    (i) $\left \langle [a,b]\; |\; c\right \rangle + (-1)^{|a|(|b|+|c|)}\left \langle [b,c]\; |\; a\right \rangle + (-1)^{|c|(|a|+|b|)}\left \langle [c,a]\; |\; b\right \rangle = 0$,

    (ii) $\left \langle a\; |\; bc\right \rangle + (-1)^{|a|(|b|+|c|)}\left \langle b\; |\; ca\right \rangle + (-1)^{|c|(|a|+|b|)}\left \langle c\; |\; ab\right \rangle = \left \langle abc\; |\; 1\right \rangle$,
    
\vspace{0.8em}

for arbitrary elements $a,b,c\in A_{\bar{0}}\cup A_{\bar{1}}$.
\end{lemma}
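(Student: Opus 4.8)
The plan is to verify the cyclic cocycle identity \eqref{eq4} for the proposed form $(\,\cdot\mid\cdot\,)$ directly, after reducing the verification to a short case analysis. Super skew-symmetry costs nothing: the prescription $(bv\mid a)=-(-1)^{|a|(|b|+1)}\langle a\mid b\rangle$ is chosen precisely so that it is super skew to $(a\mid bv)=\langle a\mid b\rangle$, while $(A\mid A)=(Av\mid Av)=(0)$ are trivially skew. Hence the whole content is \eqref{eq4}, and since \eqref{eq4} is cyclic-invariant up to a sign (one checks $T(y,z,x)=(-1)^{|x|(|y|+|z|)}T(x,y,z)$ using supercommutativity of $J$), I only need to run through the triples $(x,y,z)$ organized by how many of the three entries lie in $Av$ rather than in $A$.

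First I would dispose of the degenerate cases. If all three entries lie in $A$, every product $xy,yz,xz$ again lies in $A$, so all three terms are $(A\mid A)=0$. If exactly two entries lie in $Av$, then using $a(bv)=(ab)v$, $(bv)a=(-1)^{|a|}(ba)v$ and $(av)(bv)=(-1)^{|b|}[a,b]$ one sees that each of the three pairings is of type $(A\mid A)$ or $(Av\mid Av)$, hence vanishes. So only the all-$Av$ case and the single-$Av$ case carry information. For the all-$Av$ triple $x=av,y=bv,z=cv$ every product $(\cdot v)(\cdot v)$ returns a bracket in $A$, and expanding \eqref{eq4} turns it into a relation among $\langle[a,b]\mid c\rangle$, $\langle[b,c]\mid a\rangle$, $\langle[a,c]\mid b\rangle$; after collecting parity signs and replacing $[a,c]=-(-1)^{|a||c|}[c,a]$ by super skew-symmetry of the bracket, this is exactly condition (i). For the single-$Av$ triple $x=a,y=b\in A$, $z=cv$, the products are $ab\in A$ and $(bc)v,(ac)v\in Av$, and \eqref{eq4} collapses (after the sign bookkeeping) to the derivation-type identity $\langle ab\mid c\rangle=\langle a\mid bc\rangle+(-1)^{|a||b|}\langle b\mid ac\rangle$; call it $(\ast)$. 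By the cyclic invariance noted above, putting the $Av$-entry in a different slot yields an equivalent relation, so $(\ast)$ is the full content of this case.

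It then remains to match $(\ast)$ with the stated condition (ii), and this reconciliation is where the real work (and the hypothesis $\operatorname{char}F\neq 2$) sits. For $(\ast)\Rightarrow$ (ii) I would iterate $(\ast)$ once on $\langle(ab)c\mid 1\rangle$ and rewrite $\langle b\mid ca\rangle=(-1)^{|a||c|}\langle b\mid ac\rangle$ by commutativity in $A$; the triple-product term $\langle abc\mid 1\rangle$ of (ii) then appears automatically. For the converse the key observation is that (ii) secretly forces pairing with the identity to vanish: setting $a=b=1$ in (ii) gives $2\langle 1\mid c\rangle=0$, hence $\langle 1\mid c\rangle=0$ for all $c$. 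Feeding this back in and comparing (ii) with its specialization at third argument $1$ (the two resulting identities acquire equal right-hand sides) recovers $(\ast)$. Thus $(\ast)\Leftrightarrow$ (ii), and combining with (i) and the two trivial cases closes both directions.

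I expect the single-$Av$ case together with this last equivalence to be the main obstacle. The cocycle identity there naturally outputs the sharper relation $(\ast)$ rather than the cyclic form (ii), and recognizing that the two are the same requires noticing the hidden normalization $\langle 1\mid\cdot\rangle=0$ that (ii) imposes through the substitution $a=b=1$; this is exactly the step that uses $\operatorname{char}F\neq 2$. The remaining difficulty is purely the parity-sign management in the all-$Av$ and single-$Av$ expansions, which is routine but must be carried out carefully to land precisely on (i) and $(\ast)$.
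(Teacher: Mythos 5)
Your proposal is correct and follows essentially the same route as the paper: reduce to the two nontrivial triples ($av,bv,cv$ giving (i), and one entry in $Av$ giving a derivation-type identity), then reconcile the latter with (ii) via the normalizations $\langle 1\mid a\rangle=0$ and $\langle a\mid b\rangle+\langle b\mid a\rangle=\langle ab\mid 1\rangle$ obtained by substituting $1$'s. Your packaging of the single-$Av$ case as $(\ast)$ and the equivalence $(\ast)\Leftrightarrow(\mathrm{ii})$ is only a minor reorganization of the paper's computation with \eqref{eq10}--\eqref{eq12}.
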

\begin{proof}
    The bilinear mapping $(x\; |\; y)$ is super skew-symmetric by definition. We need to examine the cyclic cocycle identity for the following two triples:
    \begin{enumerate}
        \item $av,bv,cv$;
        \item $av,b,c$,
    \end{enumerate}
    where $a,b,c\in A_{\bar{0}}\cup A_{\bar{1}}$.

    Again to simplify computation, without loss of generality, we will assume that $A=A_{\bar{0}}$.
    Then 
		
		 \begin{equation}
        \label{eq10}
        \begin{aligned}
    \quad & \quad (av\cdot b \; |\; c) + (bc \; |\; av) + (c\cdot av \; |\; b) \\
&= -(c \; |\; abv) + (bc \; |\; av) - (b \; |\; cav) \\
&= - \left \langle c\; |\; ab\right \rangle + \left \langle bc\; |\; a\right \rangle - \left \langle b\; |\; ca\right \rangle .
    \end{aligned}
    \end{equation}
		
		Furthermore,
		$$\begin{aligned}
    \quad & \quad (av\cdot bv \; |\; cv) + (bv\cdot cv \; |\; av) + (cv\cdot av \; |\; bv) \\
&= ([a,b] \; |\; cv) + ([b,c] \; |\; av) + ([c,a] \; |\; bv) \\
&= \left \langle [a,b]\; |\; c\right \rangle + \left \langle [b,c]\; |\; a\right \rangle + \left \langle [c,a]\; |\; b\right \rangle .
    \end{aligned}$$
    So, the cocycle identity is satisfied for elements $av, \, bv, \; cv$, $a,b,c \in A$ if and only if the identity 
		(i) is satisfied.
		
\medskip

    Suppose that $(x\; |\; y)$ is a cyclic cocycle. Then
		
		$$(av\cdot b \; |\; c) + (bc \; |\; av) + (c\cdot av \; |\; b) = 0.$$
		
		Doing $b = c = 1$ in \eqref{eq10} we get
		
    \begin{equation}
        \label{eq11}
        \left \langle 1\;|\; a \right \rangle = (1\; |\; av) = 0, a\in A.
    \end{equation}
    
    Applying the cocycle identity to elements $a,b,v$, we get $(a\; |\; bv) + (b\; |\; va) + (v\; |\; ab) = 0$, which implies
    \begin{equation}
        \label{eq12}
        \left \langle a\;|\; b \right \rangle + \left \langle b\;|\; a \right \rangle = \left \langle ab\;|\; 1 \right \rangle .
    \end{equation}

    Applying \eqref{eq12} to the right hand side of \eqref{eq10} we get
		$$\left \langle ab\;|\; c \right \rangle - \left \langle abc\;|\; 1 \right \rangle + \left \langle bc\;|\; a \right \rangle + \left \langle ca\;|\; b \right \rangle - \left \langle abc\;|\; 1 \right \rangle =0.$$
		
		That is,
		$$\left \langle ab\;|\; c \right \rangle + \left \langle bc\;|\; a \right \rangle + \left \langle ca\;|\; b \right \rangle = 2 \left \langle abc\;|\; 1 \right \rangle .$$

    Again applying \eqref{eq12} to each summand on the identity above, we get 
		$$- \left \langle c\;|\; ab \right \rangle - \left \langle a\;|\; bc \right \rangle - \left \langle b\;|\; ca \right \rangle + 3 \left \langle abc\;|\; 1 \right \rangle = 2 \left \langle abc\;|\; 1 \right \rangle ,$$
    which implies (ii).

\vspace{0.8em}

    Now suppose that $\left \langle x\;|\; y \right \rangle$ satisfies the identities (i) and (ii). As mentioned above, the identity (i) implies that the cocycle identity is satisfied for elements $av, \, bv, \; cv$, $a,b,c \in A$. Substituting $b=c=1$ in (ii), we get \eqref{eq11}. Substituting $c=1$ in (ii), we get \eqref{eq12}.

\vspace{0.8em}

    Now the identity (ii) together with \eqref{eq11}, \eqref{eq12} imply that the last line of \eqref{eq10} is equal to $0$, hence the cyclic cocycle identity holds for elements $av,b,c$. This completes the proof that $(x\; |\; y)$ is a cyclic cocycle.
\end{proof}

\noindent\textbf{Question.} Which mixed cocycles are coboundaries?

\vspace{0.8em}

Consider the linear mapping $$\mu : A\otimes_F A \mapsto A,\;\; (a \otimes b)\mapsto [a,b]+(-1)^{|a|\cdot |b|}b'a.$$
Denote $W=\operatorname{Ker}\mu$.
\begin{lemma}
    \begin{enumerate}
        \item $\sum _i D(a_i,b_iv)=0$ if and only if $\sum _i a_i\otimes b_i\in W$;
        \item let $\left \langle \; | \; \right \rangle : A\times A\to F$ be a bilinear mapping satisfying i) and ii) in Lemma 11. We can identify it with $\left \langle \; | \; \right \rangle : A\otimes A\to F$, the unique linear map that defines.  The cyclic cohomology $(a\; |\; bv) = \left \langle a\;|\; b \right \rangle ,  (A\; |\; A)=(Av\; |\; Av)=(0); a,b\in A$ is a coboundary if and only if $\left \langle \; ,\; \right \rangle$ vanishes on $W$.
    \end{enumerate}
\end{lemma}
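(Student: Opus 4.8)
The two assertions are tightly linked: statement (2) will be a formal consequence of (1) together with the fact recorded earlier that $D(a,b)=0$ for all $a,b\in A$. So the plan is to first pin down (1) by a direct computation of the operator $D(a,bv)$, and then deduce (2) by a grading-plus-well-definedness argument.

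For (1), I would evaluate $D(a,bv)=R(a)R(bv)-(-1)^{|a|(|b|+1)}R(bv)R(a)$ on the three pieces $A$, $v$ and $Av$ of $J=\operatorname{Kan}(A)$, reducing as usual to the case $A=A_{\bar 0}$ to suppress signs. Using the Kantor rules $x(yv)=(xy)v$, $(yv)x=(-1)^{|x|}(yx)v$ and $(av)(bv)=(-1)^{|b|}[a,b]$, one checks that $D(a,bv)$ annihilates $A$ (the two terms cancel by commutativity of $A$), while on $cv\in Av$ the Leibniz rule $[ab,c]=a[b,c]+(-1)^{|b|\cdot|c|}[a,c]b+abc'$ for Jordan brackets gives
\[
cv\,D(a,bv)=c\bigl([a,b]+(-1)^{|a|\cdot|b|}b'a\bigr)=c\,\mu(a\otimes b),\qquad c\in A.
\]
Setting $c=1$ shows that $D(a,bv)=0$ forces $\mu(a\otimes b)=0$, and the displayed identity gives the converse; by linearity, $\sum_i D(a_i,b_iv)=0$ if and only if $\sum_i\mu(a_i\otimes b_i)=0$, i.e. $\sum_i a_i\otimes b_i\in W=\operatorname{Ker}\mu$. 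This is (1).

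For (2), the key observation is that $J$ carries a second $\mathbb{Z}/2$-grading, by the number of $v$'s: $J^{[0]}=A$, $J^{[1]}=Av$, and indeed $J^{[0]}J^{[0]}\subseteq J^{[0]}$, $J^{[0]}J^{[1]}\subseteq J^{[1]}$, $J^{[1]}J^{[1]}\subseteq J^{[0]}$. Consequently $\operatorname{Inder}(J)$ inherits this grading, and since $D(A,A)=0$ we obtain a direct-sum decomposition $\operatorname{Inder}(J)=\operatorname{Inder}(J)_{[0]}\oplus\operatorname{Inder}(J)_{[1]}$ with $\operatorname{Inder}(J)_{[0]}=\operatorname{span}D(Av,Av)$ and $\operatorname{Inder}(J)_{[1]}=\operatorname{span}D(A,Av)$. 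For the forward direction, if the mixed cocycle is a coboundary, say $(x\,|\,y)=\rho(D(x,y))$ with $\rho:\operatorname{Inder}(J)\to F$, then $\langle a\,|\,b\rangle=(a\,|\,bv)=\rho(D(a,bv))$, so whenever $\sum_i a_i\otimes b_i\in W$ part (1) yields $\sum_i\langle a_i\,|\,b_i\rangle=\rho(\sum_i D(a_i,b_iv))=\rho(0)=0$; hence $\langle\;,\,\rangle$ vanishes on $W$. Conversely, if $\langle\;,\,\rangle$ vanishes on $W$, I would define $\rho$ to be $0$ on $\operatorname{Inder}(J)_{[0]}$ and $\rho(D(a,bv))=\langle a\,|\,b\rangle$ on $\operatorname{Inder}(J)_{[1]}$; well-definedness on the latter component is exactly (1) together with the hypothesis. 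The coboundary $\rho\circ D$ then matches the mixed cocycle on each type: it vanishes on $A\times A$ (because $D(A,A)=0$), equals $\langle a\,|\,b\rangle$ on $A\times Av$, and vanishes on $Av\times Av$ (because $D(Av,Av)\subseteq\operatorname{Inder}(J)_{[0]}$).

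The technical heart is the operator computation in (1): it demands careful bookkeeping of the Kantor products and repeated use of the bracket Leibniz identity, with the super-signs handled by the reduction to $A=A_{\bar 0}$. The only conceptual subtlety in (2) is ensuring that $\rho$ can be defined consistently across the three families $D(A,A)$, $D(A,Av)$, $D(Av,Av)$, i.e. that relations among the $D(Av,Av)$ cannot impose hidden constraints on the prescribed values $\langle a\,|\,b\rangle$. This is precisely what the $v$-grading rules out, since $D(A,Av)$ and $D(Av,Av)$ occupy complementary homogeneous components, so I expect no further obstruction once (1) is in hand.
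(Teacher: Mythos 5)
Your proof is correct and follows essentially the same route as the paper: part (1) by reducing to the action on $v$ (via $A\,D(A,Av)=(0)$) and computing $v D(a,bv)=\pm\bigl([a,b]+(-1)^{|a||b|}b'a\bigr)$, and part (2) by the standard coboundary argument, which the paper only sketches by reference to its Lemma~6. Your explicit treatment of the $v$-grading to justify defining the functional independently on $D(A,Av)$ and $D(Av,Av)$ fills in a detail the paper leaves implicit, but it is the same argument in substance.
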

\begin{proof}
    We have $AD(A,Av)=(0)$. Hence $\sum _i D(a_i,b_iv)=0$ if and only if $v\sum _i D(a_i,b_iv)=0$.

\vspace{0.8em}

    Let $a,b\in A_{\bar{0}}\cup A_{\bar{1}}$. Then $$\begin{aligned}
    vD(a,bv) &= (va)(bv) - (-1)^{|a|(|b|+1)}(v(bv))a  \\
&= (-1)^{|a| + |b|} [a,b] + (-1)^{|a|\cdot |b| + |a| + |b|} b'a\\
&= (-1)^{|a| + |b|} ([a,b] + (-1)^{|a|\cdot |b|} b'a) .
    \end{aligned}$$
    It implies the assertion 1 of the lemma. The assertion 2 directly follows from the assertion 1, arguing as in the Proof of Lemma 6.
\end{proof}

Let us summarize the obtained results.

\subsection{Section Summary}
\label{S4Summary}

\vspace{0.5em}

Let $Z_p$ be the Poisson center of an associative commutative superalgebra with a Jordan bracket and $J = Kan(A)$. For an arbitrary linear functional $\lambda : Z_p\to F$, consider an extension $\widetilde{\lambda } : A\to F$. By Lemma 9, the cohomology class $(x\; |\; y)_{\widetilde{\lambda }} + B(J)$ does not depend on a choice of  an extension $\widetilde{\lambda }$.

\vspace{0.8em}

The vector space $HC_Z(J)=\{ (x\; |\; y)_{\widetilde{\lambda }} + B(J)\; |\; \lambda\in Z_p^{*} \}$ can be identified with the dual space $ Z_p^{*}$.

\vspace{0.8em}

Recall that $C_{br}(A)$ denotes the vector space of bracket cyclic cocycle on the algebra $A$. An arbitrary bracket cyclic cocycle $(x\; |\; y)$ extends to a cyclic cocycle on $J$ via $(av\; |\; bv) = (-1)^{|a|\cdot |b|}(b'\; |\; a), (Av\; |\; A)=(0)$. This defines an embedding of $C_{br}(A)$ into the vector space $C(J)$ and, furthermore, into the cyclic homology space $HC(J)$.

\vspace{0.8em}

Let $C_M(J)$ denote the vector space of mixed cyclic cocycles on $J$, $$HC_M(J)=(C_M(J)+B(J))/B(J).$$
\begin{theorem}
    $HC(J)=HC_Z(J)\oplus C_{br}(A)\oplus HC_M(J)$.
\end{theorem}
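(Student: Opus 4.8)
The plan is to prove the decomposition first at the level of cocycles, $C(J) = C_{br}(A)\oplus C_Z(J)\oplus C_M(J)$, where $C_Z(J):=\{(x\,|\,y)_\lambda:\lambda\in A^*\}$ is the space of functional cocycles from Lemma 9, and then to descend to $HC(J)=C(J)/B(J)$ by controlling how $B(J)$ meets the three summands. The guiding observation is that a super skew-symmetric form on $J=A+Av$ is determined by its three blocks on $A\times A$, on $A\times Av$, and on $Av\times Av$; for a cyclic cocycle the middle block is a free mixed datum, whereas the two outer blocks are rigidly linked by \eqref{eq7}.

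First I would introduce the restriction map $R\colon C(J)\to C_{br}(A)$, $(x\,|\,y)\mapsto (x\,|\,y)|_{A\times A}$, which is well defined because the restriction of a cyclic cocycle to $A\times A$ satisfies \eqref{eq8} and \eqref{eq9}. Composing with the canonical extension $E\colon C_{br}(A)\to C(J)$ of Lemma 10 (with $\lambda=0$) gives $R\circ E=\operatorname{id}$, so $C(J)=E(C_{br}(A))\oplus\ker R$ with $E(C_{br}(A))\cong C_{br}(A)$. For a cocycle in $\ker R$ the $A\times A$ block vanishes, so \eqref{eq7} collapses to $(av\,|\,bv)=(-1)^{|b|+1}\lambda(ab)$ with $\lambda(a)=(av\,|\,v)$; that is, its $Av\times Av$ block coincides with that of the functional cocycle $(x\,|\,y)_{-\lambda}\in C_Z(J)$. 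Subtracting $(x\,|\,y)_{-\lambda}$ leaves a cocycle supported only on the $A\times Av$ block, i.e.\ a mixed cocycle in the sense of Lemma 11. Since $C_Z(J)\cap C_M(J)=0$ (evaluate the $Av\times Av$ block at $b=1$ and use that $A$ is unital, which also shows $C_Z(J)\cong A^*$) and $E(C_{br}(A))$ meets $C_Z(J)\oplus C_M(J)$ only in $0$ (compare $A\times A$ blocks), this gives the cocycle-level decomposition $C(J)=C_{br}(A)\oplus C_Z(J)\oplus C_M(J)$.

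To pass to homology I would locate $B(J)$ in this decomposition. Because $D(a,b)=0$ for all $a,b\in A$, every cyclic coboundary vanishes on $A\times A$; comparing $A\times A$ blocks then yields $B(J)\subseteq C_Z(J)\oplus C_M(J)$ and $C_{br}(A)\cap B(J)=0$. The crux is that $B(J)$ splits compatibly, $B(J)=(B(J)\cap C_Z(J))\oplus(B(J)\cap C_M(J))$. Given $\beta=\delta\circ D\in B(J)$ with $\delta\in\operatorname{Inder}(J)^*$, write $\beta=\beta_Z+\beta_M$ in $C_Z(J)\oplus C_M(J)$. Its $Av\times Av$ block is $\beta(av\,|\,bv)=\delta(D(av,bv))$, and Lemma 9 identifies $\operatorname{span}D(Av,Av)$ with $A/Z_p$ via $D(av,bv)\mapsto(-1)^{|b|}ab+Z_p$; hence $\delta$ restricted there is given by some $\mu\in A^*$ with $\mu(Z_p)=0$, so that $\beta_Z=(x\,|\,y)_\mu$. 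By part (2) of Lemma 9 this $(x\,|\,y)_\mu$ is a coboundary, whence $\beta_Z\in B(J)$ and therefore $\beta_M=\beta-\beta_Z\in B(J)$ as well. This gives the compatible splitting, so that
\[
HC(J)=C(J)/B(J)\cong C_{br}(A)\oplus\frac{C_Z(J)}{C_Z(J)\cap B(J)}\oplus\frac{C_M(J)}{C_M(J)\cap B(J)}.
\]
Here the first summand is $C_{br}(A)$ since $C_{br}(A)\cap B(J)=0$; the second is $A^*/\{\mu:\mu(Z_p)=0\}\cong Z_p^*$, i.e.\ $HC_Z(J)$ by part (2) of Lemma 9; and the third is $(C_M(J)+B(J))/B(J)=HC_M(J)$ by definition, proving the theorem (up to reordering of summands).

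I expect the main obstacle to be the compatible splitting of $B(J)$ in the third paragraph. The cocycle-level direct sum is essentially bookkeeping once \eqref{eq7} is available, but showing that the $\lambda$-type component of an \emph{arbitrary} coboundary is again a coboundary — so that its mixed complement is too — genuinely relies on the identification $\sum_i D(a_iv,b_iv)=0\Leftrightarrow\sum_i(-1)^{|b_i|}a_ib_i\in Z_p$ from Lemma 9 together with the surjectivity of multiplication in the unital algebra $A$. Without this one could only conclude $B(J)\subseteq C_Z(J)\oplus C_M(J)$, which is not enough to separate the $HC_Z(J)$ and $HC_M(J)$ factors.
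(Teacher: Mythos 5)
Your proof is correct and follows essentially the same route as the paper, which presents Theorem 2 as a summary of Lemmas 9--12 and the block analysis via \eqref{eq7} without writing out the assembly. You correctly identify and supply the one step the paper leaves implicit --- that $B(J)$ splits compatibly as $(B(J)\cap C_Z(J))\oplus(B(J)\cap C_M(J))$, via the identification $\operatorname{span}D(Av,Av)\cong A/Z_p$ from the proof of Lemma 9 and part (2) of that lemma --- which is exactly what is needed to turn the cocycle-level decomposition into a direct sum in homology.
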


\section{Poisson brackets and Hamiltonian superalgebras}
\label{PoissonBracketsHamiltonianSuperalgebra}
In this section we compute cyclic homology of Kantor doubles $K(A)$, where $A=(A,[\; ,\; ])$ is a Poisson superalgebra.

\vspace{0.8em}

Recall that $H_n=(F[p_1,\dots ,p_n,q_1,\dots ,q_n],[p_i,q_j]=\delta _{ij},[p_i,p_j]=[q_i,q_j]=0),n\geq 1$ is a family of Poisson algebras.

\vspace{0.8em}

Let $B$ be an arbitrary Poisson superalgebra and let $A=H_1\otimes B$.
\begin{theorem}
    $HC(A)= Z_p(B)^{*}$, where $Z_p(B)$ denotes the Poisson center of $B$.
\end{theorem}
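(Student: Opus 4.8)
The plan is to read off the answer from the decomposition $HC(J)=HC_Z(J)\oplus C_{br}(A)\oplus HC_M(J)$ of Theorem 2, exploiting that for a genuine Poisson bracket the associated derivation is trivial, $a'=[a,1]=0$ for all $a$. Then $Z_p$ is the ordinary Poisson center $\{u:[c,u]=0\ \forall c\}$, and I first compute it for $A=H_1\otimes B$. Since $\operatorname{ad}(p\otimes1)$ and $\operatorname{ad}(q\otimes1)$ act on the first tensor factor as $\partial_q$ and $-\partial_p$ and annihilate $B$, an element $\sum_i f_i\otimes\beta_i$ with the $\beta_i$ linearly independent is central only if every $f_i$ lies in $Z_p(H_1)=F$; so $Z_p(A)=1\otimes Z_p(B)\cong Z_p(B)$. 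By the identification of $HC_Z(J)$ with $Z_p^{*}$ (the Section Summary preceding Theorem 2, which rests on Lemma 9) this yields $HC_Z(J)\cong Z_p(B)^{*}$. Hence everything reduces to the two vanishing statements $C_{br}(A)=0$ and $HC_M(J)=0$.

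For the first, note that in the Poisson case $a'=0$ collapses the defining identity \eqref{eq9} of a bracket cyclic cocycle to $([a,b]\mid c)=0$ for all $a,b,c$. The relation $[p^{m},q^{n}]=mn\,p^{m-1}q^{n-1}$ gives $\{H_1,H_1\}=H_1$, and $[p\otimes1,\,q\otimes\beta]=1\otimes\beta$ then gives $\{A,A\}=A$; i.e. $A$ is perfect as a Lie superalgebra. Therefore $([a,b]\mid c)=0$ forces $(A\mid c)=0$ for every $c$, so the cocycle is zero and $C_{br}(A)=0$.

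For $HC_M(J)=0$ I show that every mixed cocycle $\langle\cdot\mid\cdot\rangle$ is a coboundary. Condition (i) of Lemma 11 with $c=1$ reads $\langle[a,b]\mid1\rangle=0$ (again $[b,1]=[1,a]=0$), so $\langle\cdot\mid1\rangle$ vanishes on the perfect algebra $A$ and \eqref{eq12} makes $\langle\cdot\mid\cdot\rangle$ super skew-symmetric. By Lemma 12 it then suffices to show that $\langle\cdot\mid\cdot\rangle$ annihilates $W=\ker\bigl(\textstyle\bigwedge^{2}A\xrightarrow{[\,\cdot,\cdot\,]}A\bigr)$. The key observation is that conditions (i) and (ii) are precisely the vanishing of $\langle\cdot\mid\cdot\rangle$ on two natural families of elements of $W$: the super-Jacobi tensors $j(x,y,z)=[x,y]\wedge z+\mathrm{cyc}$ and the Leibniz tensors $P(x,y,z)=x\wedge yz+\mathrm{cyc}$ (here (ii) has right-hand side $\langle xyz\mid1\rangle=0$). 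A short computation with super-commutativity confirms $[[x,y],z]+\mathrm{cyc}=0$ and $[x,yz]+\mathrm{cyc}=0$, so indeed $j,P\in W$. Thus $HC_M(J)=0$ reduces to the purely structural statement that $W$ is spanned by the tensors $j$ and $P$.

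This generation statement is the main obstacle, and it is where the symplectic nature of $H_1$ enters. Writing $\partial_2(x\wedge y)=[x,y]$ and $\partial_3(x\wedge y\wedge z)=[x,y]\wedge z-[x,z]\wedge y+[y,z]\wedge x$ for the Chevalley--Eilenberg boundaries of $(A,[\,\cdot,\cdot\,])$, one has $\operatorname{span}\{j\}=\operatorname{im}\partial_3$ and $W=\ker\partial_2$, so that $W/\operatorname{span}\{j\}=H_2(A;F)$ and the claim becomes: the classes of the Leibniz tensors $P$ span $H_2(A;F)$. I would prove this by grading $A$ by the eigenvalues of the inner semisimple derivation $\operatorname{ad}(pq\otimes1)$ (the weight of $p^{a}q^{b}\otimes\beta$ being $a-b$), under which $\partial_2,\partial_3,j,P$ are all homogeneous. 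On each nonzero-weight subcomplex the contraction $\iota_{pq\otimes1}$ is a contracting homotopy (Cartan's formula $\mathcal L_{pq\otimes1}=\partial\,\iota+\iota\,\partial$ acts as multiplication by the weight), so $H_2(A;F)$ is concentrated in weight $0$ and the Jacobi relations already exhaust $W$ in nonzero weight. The weight-$0$ part is the hard case, where the homotopy degenerates: there I would use the Leibniz relations $P$ together with the surjectivity of $\partial_q=\operatorname{ad}(p\otimes1)$ and $\partial_p=-\operatorname{ad}(q\otimes1)$ on $H_1$ to reduce an arbitrary weight-$0$ cycle to a combination of $j$'s and $P$'s. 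This weight-$0$ reduction, which reflects the de Rham acyclicity of the polynomial symplectic plane, is the principal technical step of the argument.
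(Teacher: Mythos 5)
Most of your reduction is sound and parallels the paper: the computation $Z_p(H_1\otimes B)=1\otimes Z_p(B)$ is the paper's Lemma 14, the vanishing of $C_{br}(A)$ via the collapse of \eqref{eq9} to $([a,b]\mid c)=0$ together with $[A,A]=A$ is Lemma 13, and the identification $HC_Z(J)\cong Z_p^{*}$ is the Section Summary. Your reformulation of $HC_M(J)=0$ is also correct: since in the Poisson case the cocycle conditions (i), (ii) plus $\langle A\mid 1\rangle=0$ say exactly that $\langle\cdot\mid\cdot\rangle$ kills the skew, Jacobi and Leibniz tensors, Lemma 12(2) reduces everything to the purely structural claim that $W=\ker\bigl([\cdot,\cdot]:A\otimes A\to A\bigr)$ is spanned by those three families --- which is indeed equivalent (by duality of subspaces) to the paper's Proposition 1. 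Your Cartan-homotopy argument with $h=pq\otimes 1$ correctly disposes of the nonzero-weight part of $W$.

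The gap is that the weight-zero case, which you yourself flag as ``the principal technical step,'' is never carried out; it is exactly where all the content of Proposition 1 lives, and ``use the Leibniz relations together with surjectivity of $\operatorname{ad}(p)$ and $\operatorname{ad}(q)$'' is a plan, not a proof. The paper does this work in Lemma 17, proving $A\otimes A=p\otimes A+S$ (where $S$ is the span of the skew, Jacobi and Leibniz tensors) by a six-step explicit manipulation whose key device is the invertible operator $P:a\mapsto a+q[p,a]$, which lets one rewrite $b\otimes a$ as $p\otimes b[q,P^{-1}(a)]$ modulo $S$; and then the residual elements $p\otimes c$ with $[p,c]=0$, i.e.\ $c=\sum_i p^ib_i$, are killed by the computation $\langle p\mid p^ib\rangle=-\tfrac{i}{2}\langle p^2\mid p^{i-1}b\rangle=-i\langle p\mid p^ib\rangle$, whence $(i+1)\langle p\mid p^ib\rangle=0$. (Incidentally, these residual terms have nonzero weight $-(i+1)$, so your homotopy would also handle them --- but only after something like Lemma 17 has reduced a general weight-zero cycle to them, and that reduction is precisely what is missing.) Until you supply an argument of this kind for the weight-zero cycles, your proof of $HC_M(K(A))=0$, and hence of the theorem, is incomplete.
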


Consider the Grassmann superalgebra $G(3)$ with the standard Poisson bracket and the tensor product $A\otimes G(3)$ of Poison superalgebras.
Abusing notation we denote the Poisson bracket on $A\otimes G(3)$ as $[\; ,\; ]$.
Consider the Lie superalgebra $L=(A\otimes G(3), [\; ,\; ])$.
The superalgebra $[L,L]$ is isomorphic to the (universal) Tits-Kantor-Koecher construction of the Jordan superalgebra $K(A)$.
Theorems 1, 2, 3 yield description of the vector space $H^2([\bar{L},\bar{L}])=H^2(\overline{\operatorname{TKK}}(K(A)))$, where $\overline{L}$ is the quotient of the superalgebra $L$ modulo the center.
In particular we compute second homologies of Hamiltonian superalgebras.

\vspace{0.8em}

Let $A=H_n\otimes G(m),n\geq 1,m\geq 3$. Let $H(n,m)$ denote the Lie superalgebra $(A,[\; ,\; ])$.
\begin{theorem}
    $\dim _F H^2(H(n,m)) = 1$.
\end{theorem}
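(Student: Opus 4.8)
The plan is to recognize $H(n,m)$ as one of the Lie superalgebras $L=(A\otimes G(3),[\,,\,])$ from the end of Section~5 and then feed it into the cyclic-homology machinery. As Poisson superalgebras $H_n\cong H_1\otimes H_{n-1}$ and $G(m)\cong G(m-3)\otimes G(3)$, so I would write $H_n\otimes G(m)\cong (H_1\otimes B)\otimes G(3)$ with $B:=H_{n-1}\otimes G(m-3)$ and $A:=H_1\otimes B=H_n\otimes G(m-3)$. Then $H(n,m)$ is precisely the algebra $L=(A\otimes G(3),[\,,\,])$, for which $[L,L]\cong\operatorname{TKK}(K(A))$; by Theorem~1 the second cohomology of the reduced algebra $\overline{\operatorname{TKK}}(K(A))=[\bar L,\bar L]$ is $HC(K(A))$.

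The first genuine step is the bookkeeping that turns $H^2(H(n,m))$ into $H^2(\overline{\operatorname{TKK}}(K(A)))$. One must move between the full Poisson Lie superalgebra $L=H(n,m)$, its derived algebra $[L,L]=\operatorname{TKK}(K(A))$ (the universal TKK), the quotient $\bar L=L/Z(L)$, and $[\bar L,\bar L]=\overline{\operatorname{TKK}}(K(A))$ (the reduced TKK, which has trivial center). The content is that the discrepancy between $L$ and the reduced TKK — the center $Z(L)$ together with the failure of $L$ to be perfect, which a Grassmann-degree count traces to the top-degree part of the Grassmann factor — does not affect the second cohomology, so that $H^2(H(n,m))\cong H^2(\overline{\operatorname{TKK}}(K(A)))\cong HC(K(A))$. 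I expect this identification to be the main obstacle; it is also where the hypotheses $n\ge 1$ and $m\ge 3$ enter, since they guarantee the $H_1$ and $G(3)$ tensor factors, and hence the $\mathfrak{sl}(2)$-triple and the $\mathbb Z$-grading underlying the TKK construction.

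It then remains to evaluate $HC(K(A))$. Because $A=H_1\otimes B$, Theorem~4 applies verbatim and gives $HC(K(A))=Z_p(B)^{*}$, so everything comes down to the Poisson center of $B=H_{n-1}\otimes G(m-3)$. Since $B$ is a tensor product of genuine Poisson superalgebras, $Z_p(B)=Z_p(H_{n-1})\otimes Z_p(G(m-3))$. For the symplectic polynomial algebra $H_{n-1}$ an element is Poisson-central iff all its partial derivatives in the $p_i,q_i$ vanish, so $Z_p(H_{n-1})=F$; for $G(m-3)$ with $[\xi_i,\xi_j]=\delta_{ij}$ the operator $[\,\cdot\,,\xi_j]$ is, up to sign, the super-derivative $\partial_{\xi_j}$, so a central element must be annihilated by every $\partial_{\xi_j}$ and hence is constant, giving $Z_p(G(m-3))=F$. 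Therefore $Z_p(B)=F\otimes F=F$ is one-dimensional, and $\dim_F H^2(H(n,m))=\dim_F Z_p(B)^{*}=1$.
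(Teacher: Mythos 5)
Your overall route is the paper's: write $H_n\otimes G(m)=A\otimes G(3)$ with $A=H_n\otimes G(m-3)=H_1\otimes B$, $B=H_{n-1}\otimes G(m-3)$, use the TKK identification and Theorem 1 to convert the problem into computing $HC(K(A))$, and evaluate the latter via Theorem 4 as $Z_p(B)^{*}$, which is one-dimensional. That last computation is correct (though the equality $Z_p(P\otimes Q)=Z_p(P)\otimes Z_p(Q)$ deserves the argument of Lemma 14 rather than a bare assertion).

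The genuine gap is in the step you yourself flag as the main obstacle, and your description of that step is wrong on both counts. First, $L=(H_n\otimes G(m),[\,,\,])$ \emph{is} perfect: since $p^iq^j=\tfrac{1}{j+1}[p,p^iq^{j+1}]$, every element $h\otimes c$ with $h\in H_1$ equals $\sum_i[h_i'\otimes c,\,h_i''\otimes 1]$, so the top Grassmann degree causes no loss of perfectness --- the bracket is taken entirely in the $H_1$ factor. Second, the actual discrepancy, namely the one-dimensional center $Z(L)=Z_p(H_n\otimes G(m))=F\cdot 1$, \emph{does} affect second cohomology: for a perfect Lie superalgebra $L$ with center $Z$, the universal central extension of $L$ is also the universal central extension of $L/Z$, whence $\dim H^2(L/Z)=\dim H^2(L)+\dim Z$. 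Thus $H^2$ of the full Poisson superalgebra and $H^2$ of its central quotient differ by exactly $1$, and your claim that the passage ``does not affect the second cohomology'' fails precisely where you need it. The algebra to which Theorem 1 applies, and whose $H^2$ is one-dimensional, is $[\bar L,\bar L]=\bar L=L/Z(L)\cong\overline{\operatorname{TKK}}(K(A))$, i.e.\ the Hamiltonian superalgebra realized by Hamiltonian vector fields; running your chain of isomorphisms on the full Poisson algebra $L$ instead would give $\dim H^2(L)=0$. To close the argument you must identify $H(n,m)$ with $L/Z(L)$ (this is what the paper's preceding paragraph, which computes $H^2([\bar L,\bar L])$, intends) and apply Theorem 1 to that quotient directly, rather than argue that the distinction is immaterial.
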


\textit{Throughout this section we assume that $A=H_1\otimes _F B$, where $B$ is a Poisson superalgebra and $H_1=(F[p,q],[p,q]=1)$.}
\begin{lemma}
    $C_{br}(A)=(0)$.
\end{lemma}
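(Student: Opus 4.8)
The plan is to exploit that for a Poisson bracket the even derivation $a\mapsto a'=[a,1]$ is identically zero, so that the two defining identities \eqref{eq8} and \eqref{eq9} of a bracket cyclic cocycle degenerate drastically. Indeed, since $A=H_1\otimes_F B$ is a tensor product of \emph{Poisson} superalgebras, its unit is $1\otimes 1$ and $[1,\cdot\,]$ vanishes on each factor, whence $a'=0$ for every $a\in A$. Then \eqref{eq8} is vacuous and \eqref{eq9} collapses to
\[
([a,b]\;|\;c)=0\qquad\text{for all }a,b,c\in A.
\]
Thus any $(x\;|\;y)\in C_{br}(A)$ annihilates $[A,A]\times A$, where $[A,A]$ denotes the linear span of all brackets. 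The lemma is therefore reduced to the purely algebraic statement that $[A,A]=A$; granting this, bilinearity immediately forces $(x\;|\;y)\equiv 0$.

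The main step, and essentially the only computation, is to verify $[A,A]=A$, where the factor $H_1=(F[p,q],[p,q]=1)$ does all the work. Using the tensor-product bracket $[a_1\otimes b_1,a_2\otimes b_2]=(-1)^{|b_1||a_2|}([a_1,a_2]\otimes b_1b_2+a_1a_2\otimes[b_1,b_2])$ together with $[1,b]_B=0$, I would compute, for $f\in F[p,q]$ and $b\in B$,
\[
\bigl[\,p\otimes 1,\;f\otimes b\,\bigr]=[p,f]\otimes b=(\partial_q f)\otimes b,
\]
since $[p,f]=\partial_q f$ in $H_1$. Because $\partial_q$ is surjective on $F[p,q]$ in characteristic $0$ — it sends $p^iq^{j+1}/(j+1)$ to $p^iq^j$ — every monomial $p^iq^j\otimes b$ with $i,j\geq 0$ lies in $[A,A]$. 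As such monomials span $A=F[p,q]\otimes B$, we obtain $[A,A]=A$.

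Combining the two steps, $(x\;|\;c)=0$ for all $x\in[A,A]=A$ and all $c\in A$, so $(x\;|\;y)$ is the zero form and $C_{br}(A)=(0)$. The only delicate point — the \emph{main obstacle}, modest as it is — is the bracket computation of the previous paragraph: one must confirm that the tensor bracket really reduces to $\partial_q\otimes\mathrm{id}$, which uses that $B$ is unital with $[1,b]_B=0$, and that the scalars $j+1$ are invertible, i.e.\ the standing assumption of characteristic $0$. Everything else is formal, and notably neither the skew-symmetry nor the cyclic identity \eqref{eq4} is actually needed once $([a,b]\;|\;c)=0$ and $[A,A]=A$ are in hand.
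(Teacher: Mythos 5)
Your proof is correct and follows essentially the same route as the paper: the identity \eqref{eq9} with $a'=0$ (valid for Poisson brackets) gives $([A,A]\,|\,A)=(0)$, and then one checks $[A,A]=A$ via the same computation $p^iq^j=\frac{1}{j+1}[p,p^iq^{j+1}]$ transported through the tensor product. The only difference is cosmetic — you make the vanishing of the derivation $a\mapsto a'$ explicit, which the paper leaves implicit.
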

\begin{proof}
    If $(a\; |\; b)$ is a bracket cyclic cocycle on $A$, then the identity \eqref{eq9} implies that $([A,A]\; |\; A)=(0)$.
    It remains to show that $[A,A]=A$.
    Indeed, the equality $p^iq^j=\frac{1}{j+1}[p,p^iq^{j+1}]$ implies that $[H_1,H_1]=H_1$. Consider an arbitrary tensor $h\otimes b, h\in H_1, b\in B$. Suppose that $h=\sum _i [h_i',h_i'']$. Then $h\otimes b=\sum _i [h_i'\otimes b,h_i''\otimes 1]$.
\end{proof}

\begin{lemma}
    $Z_p(H_1\otimes B)=Z_p(B)$.
\end{lemma}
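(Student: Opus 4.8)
The plan is to exploit the fact that $A=H_1\otimes B$ is a genuine \emph{Poisson} superalgebra, being the tensor product of the two Poisson superalgebras $H_1$ and $B$. As recorded just before Lemma 9, when the bracket is Poisson the conditions $u'=0$ and $(cu)'+[c,u]=0$ collapse to the single requirement $[c,u]=0$ for all $c\in A$; that is, $Z_p(A)$ is the ordinary Poisson centralizer. Writing a general element as $u=\sum_{i,j}p^iq^j\otimes b_{ij}$ with $b_{ij}\in B$, I would extract centrality conditions from the two distinguished elements $p\otimes 1$ and $q\otimes 1$, and then from the elements $1\otimes\beta$.

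First I would compute the relevant adjoint actions. Since $B$ is Poisson we have $[1,\beta]=0$ in $B$, so the tensor-product bracket reduces to $[p\otimes 1,\,a\otimes b]=[p,a]\otimes b$ and $[q\otimes 1,\,a\otimes b]=[q,a]\otimes b$ for $a\in H_1$. On $H_1=F[p,q]$ with $[p,q]=1$ one has $[p,f]=\partial f/\partial q$ and $[q,f]=-\partial f/\partial p$, so $\operatorname{ad}(p\otimes 1)$ and $\operatorname{ad}(q\otimes 1)$ act as $\partial_q$ and $-\partial_p$ tensored with $\mathrm{id}_B$. Hence $[p\otimes 1,u]=\sum_{i,j}j\,p^iq^{j-1}\otimes b_{ij}$ and $[q\otimes 1,u]=-\sum_{i,j}i\,p^{i-1}q^j\otimes b_{ij}$. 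Setting both to zero and using that $\{p^iq^j\}$ is an $F$-basis of $H_1$ gives $j\,b_{ij}=0$ and $i\,b_{ij}=0$ for all $i,j$; dividing by the nonzero integers (this is where characteristic zero enters) forces $b_{ij}=0$ whenever $(i,j)\neq(0,0)$. Thus every central $u$ lies in $1\otimes B$, say $u=1\otimes b$. Imposing the remaining conditions $[1\otimes\beta,\,1\otimes b]=1\otimes[\beta,b]=0$ for all $\beta\in B$ then forces $b\in Z_p(B)$, giving $Z_p(A)\subseteq 1\otimes Z_p(B)$.

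For the reverse inclusion I would check directly that $1\otimes Z_p(B)$ is central: for $b\in Z_p(B)$ and an arbitrary $c=a\otimes\beta$ the bracket $[a\otimes\beta,\,1\otimes b]=[a,1]\otimes\beta b+a\otimes[\beta,b]$ vanishes because $[a,1]=0$ in the Poisson algebra $H_1$ and $[\beta,b]=0$ by definition of $Z_p(B)$. Combining the two inclusions yields the identification $Z_p(H_1\otimes B)=1\otimes Z_p(B)\cong Z_p(B)$.

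The computation is essentially a separation-of-variables argument, and the only genuine input is that the Poisson centralizer of $H_1$ itself is exactly the scalars $F$: this is what allows the $H_1$-dependence of a central element to be annihilated, and it is precisely here that the common kernel of $\partial_p,\partial_q$ and the characteristic-zero hypothesis are needed. The one point requiring a little care is that sufficiency must be verified for \emph{all} $c\in A$ rather than only for the distinguished elements used to prove necessity, but this follows immediately from the displayed bracket identity with no extra work, so I do not expect a serious obstacle here.
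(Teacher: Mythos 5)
Your argument is correct and is essentially the paper's proof: both reduce to showing that bracketing with $H_1\otimes 1$ forces the $H_1$-component of a central element to be scalar (the paper cites $Z_p(H_1)=F\cdot 1$ after choosing the $b_i$ linearly independent, while you verify it explicitly via $\operatorname{ad}(p\otimes 1)=\partial_q\otimes\mathrm{id}$ and $\operatorname{ad}(q\otimes 1)=-\partial_p\otimes\mathrm{id}$ on the monomial basis), and then bracketing with $1\otimes B$ forces the remaining factor into $Z_p(B)$. The reverse inclusion is checked directly in both proofs, so no changes are needed.
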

\begin{proof}
    The inclusion $1\otimes Z_p(B) \subseteq Z_p(H_1\otimes B)$ is straightforward.

\vspace{0.8em}

    Suppose that an element $c=\sum _i h_i\otimes b_i$ lies in $Z_p(H_1\otimes B);h_i\in H_1, b_i\in B$, the elements $\{ b_i \} _i$ are linearly independent. For an arbitrary element $h\in H_1$ we have $$[h\otimes 1,\sum _i h_i\otimes b_i]=\sum _i [h,h_i]\otimes b_i,$$ which implies that all elements $[h,h_i]$ are equal to zero, $h_i\in Z_p(H_1)=F\cdot 1$. Hence $c=1\otimes b,b\in B$. Again for an arbitrary element $b'\in B$ we have $[1\otimes b',1\otimes b]=1\otimes [b',b]=0$. Hence $b\in Z_p(B)$.
\end{proof}

Now our aim is
\begin{proposition}
    $HC_M(K(A))=(0)$.
\end{proposition}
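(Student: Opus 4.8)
The plan is to reduce, via Lemma 12, the vanishing of $HC_M(K(A))$ to a purely algebraic statement about $\ker\mu$. Since $A=H_1\otimes B$ is a \emph{Poisson} superalgebra, the derivation $D$ is trivial, $a'=[a,1]=0$ for all $a\in A$, so the map of Lemma 12 becomes $\mu(a\otimes b)=[a,b]$ and $W=\ker\mu=\{\sum_i a_i\otimes b_i:\sum_i[a_i,b_i]=0\}$. By Lemma 12(2) it suffices to show that every bilinear form $\langle\;|\;\rangle$ satisfying (i) and (ii) of Lemma 11 vanishes on $W$. As a first (easy) step I set $\phi(x)=\langle x|1\rangle$ and apply (i) with $c=1$: using $[b,1]=[1,a]=0$ this collapses to $\phi([a,b])=0$. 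By the argument in the proof of Lemma 13 one has $[A,A]=A$ (since $h\otimes b=\sum_i[h_i'\otimes b,h_i''\otimes 1]$ once $h=\sum_i[h_i',h_i'']$), whence $\phi=0$. Then \eqref{eq12} forces $\langle a|b\rangle+(-1)^{|a||b|}\langle b|a\rangle=\phi(ab)=0$, so $\omega:=\langle\;|\;\rangle$ is super skew-symmetric and (ii) reduces to the ``cyclic'' identity
\[
\omega(a,bc)+(-1)^{|a|(|b|+|c|)}\omega(b,ca)+(-1)^{|c|(|a|+|b|)}\omega(c,ab)=0 .\tag{ii$'$}
\]

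Next I would exhibit three explicit families inside $\ker\mu$ on which $\omega$ automatically vanishes. First, the symmetric tensors $S_{a,b}=a\otimes b+(-1)^{|a||b|}b\otimes a$ lie in $\ker\mu$ because the bracket is super skew, and $\omega(S_{a,b})=0$ by skew-symmetry of $\omega$. Second, the Chevalley--Eilenberg (Jacobi) elements $J_{a,b,c}=[a,b]\otimes c+(-1)^{|a|(|b|+|c|)}[b,c]\otimes a+(-1)^{|c|(|a|+|b|)}[c,a]\otimes b$ lie in $\ker\mu$ by the Jacobi identity, and $\omega(J_{a,b,c})=0$ is precisely condition (i). Third, and crucially, the ``cyclic product'' elements $M_{a,b,c}=a\otimes bc+(-1)^{|a|(|b|+|c|)}b\otimes ca+(-1)^{|c|(|a|+|b|)}c\otimes ab$ also lie in $\ker\mu$: in the even case one computes $[a,bc]+[b,ca]+[c,ab]=0$ by combining the Leibniz rule with commutativity (each term cancels against another, e.g.\ $[a,b]c-c[a,b]=0$), the general case following with the usual signs. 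Identity (ii$'$) says exactly $\omega(M_{a,b,c})=0$. Thus $\omega$ annihilates the span of $S$, $J$ and $M$.

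It therefore remains to prove the spanning statement
\[
\ker\mu=\operatorname{span}\bigl(S_{a,b},\,J_{a,b,c},\,M_{a,b,c}\bigr),
\]
and this is where I expect the main obstacle to lie. Modulo the symmetric tensors $S$ one works in the exterior square, where $\mu$ is the bracket map $\Lambda^2 A\to A$; modulo the Jacobi elements $J$ its kernel is the Lie homology $H_2(A,F)$, so the assertion is equivalent to saying that the classes of the elements $M_{a,b,c}$ span all of $H_2(A,F)$ — equivalently, that the only obstruction to $\omega$ being a Lie coboundary is killed by the associative compatibility (ii$'$). This is precisely the point at which the symplectic structure of the factor $H_1$ must be used: the canonical pair $p,q$ with $[p,q]=1$ and the relations $[p,p^iq^{j+1}]=(j+1)\,p^iq^j$, $[q,p^{i+1}q^{j}]=-(i+1)\,p^iq^j$ allow one to write every monomial as a bracket and to ``integrate by parts,'' reducing an arbitrary element $\sum_i a_i\otimes b_i\in\ker\mu$ step by step (by induction on total degree in $p,q$) to a combination of $S$-, $J$- and $M$-elements. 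Carrying out this reduction explicitly is the technical heart of the argument; once it is established, $\omega$ vanishes on all of $\ker\mu$, so by Lemma 12 every mixed cocycle is a coboundary and $HC_M(K(A))=(0)$.
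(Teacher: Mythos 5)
Your reduction is set up correctly and matches the paper's: via Lemma 12 (equivalently Lemma 15) the claim becomes ``$\sum_i[a_i,b_i]=0$ implies $\sum_i\left\langle a_i\,|\,b_i\right\rangle=0$,'' you correctly obtain $\left\langle A\,|\,1\right\rangle=0$ and super skew-symmetry, and you correctly identify the three families of tensors (symmetric, Jacobi, and cyclic-product) that lie in $\ker\mu$ and are annihilated by $\left\langle\;|\;\right\rangle$. But the proof stops exactly where the real work begins: you state that it remains to show $\ker\mu=\operatorname{span}(S_{a,b},J_{a,b,c},M_{a,b,c})$ and then only sketch a hoped-for induction (``carrying out this reduction explicitly is the technical heart of the argument''). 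That spanning statement is never established, so as written this is not a proof.

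Moreover, the target you set yourself is stronger than what is actually needed, and the paper does not prove it. Writing $S$ for the span of your three families, the paper's Lemma 17 proves the structural statement $A\otimes A=p\otimes A+S$ about the \emph{whole} tensor square (using $A=[p,A]=[q,A]$ and the invertible operator $P:a\mapsto a+q[p,a]$), not a description of $\ker\mu$. Consequently an element of $\ker\mu$ is only known to have the form $p\otimes c+s$ with $s\in S$ and $[p,c]=0$, and there is no claim that $p\otimes c$ itself lies in $S$. The residual term is killed by a separate direct computation: $[p,c]=0$ forces $c=\sum_i p^ib_i$ with $b_i\in B$, and from $p=\tfrac12[p^2,q]$ together with identity (i) one gets $\left\langle p\,|\,p^ib\right\rangle=-i\left\langle p\,|\,p^ib\right\rangle$, hence $\left\langle p\,|\,p^ib\right\rangle=0$ in characteristic zero. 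So to complete your argument you would need either to prove your stronger spanning claim (which may well fail) or, as the paper does, to prove the weaker decomposition $A\otimes A=p\otimes A+S$ and then handle the leftover $p\otimes c$ with $[p,c]=0$ by this explicit calculation. Neither step is present in your write-up.
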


Proof of this proposition requires several lemmas.

\begin{lemma}
    A mixed cyclic cocycle $(~ |~)$ on $K(A)$ is a coboundary if and only if $\sum _i [a_i,b_i]=0; a_i,b_i\in A_{\bar{0}}\cup A_{\bar{1}}$ implies $\sum _i \left \langle a_i\; |\; b_i \right \rangle = 0$. Here $\left\langle ~ |~ \right \rangle$ is the bilinear map defining $(~ |~)$.
\end{lemma}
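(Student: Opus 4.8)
The plan is to derive this immediately from Lemma 12, exploiting the fact that throughout this section $A = H_1 \otimes_F B$ carries a genuine Poisson bracket, so that the derivation attached to the bracket is trivial. First I would record that for a Poisson superalgebra the element $a' = [a,1]$ vanishes identically. Indeed, the Leibniz property $[a\cdot 1, c] = a[1,c] + [a,c]$ forces $a[1,c] = 0$ for all $a$, hence $[1,c] = 0$, and by super skew-symmetry $a' = [a,1] = 0$. Equivalently, a Poisson bracket is the special case of a Jordan bracket in which the correction term $abc'$ in $[ab,c] = a[b,c] + (-1)^{|b|\cdot|c|}[a,c]b + abc'$ is absent, which again says $c' = 0$.

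With $a' = 0$ in hand, the map $\mu : A \otimes_F A \to A$ introduced before Lemma 12, namely $\mu(a \otimes b) = [a,b] + (-1)^{|a|\cdot|b|} b'a$, collapses to $\mu(a \otimes b) = [a,b]$. Consequently its kernel admits the explicit description
$$
W = \operatorname{Ker}\mu = \Big\{\, \sum_i a_i \otimes b_i \in A \otimes_F A \ \Big|\ \sum_i [a_i, b_i] = 0 \,\Big\};
$$
that is, a tensor lies in $W$ precisely when the corresponding sum of brackets vanishes. This is the only place where the Poisson hypothesis of the section enters.

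Now I would invoke part (2) of Lemma 12. The mixed cyclic cocycle $(~|~)$ is the one determined, as in Lemma 11, by the bilinear form $\langle ~|~ \rangle$ via $(a\; |\; bv) = \langle a\; |\; b \rangle$ and $(A\;|\;A) = (Av\;|\;Av) = (0)$; Lemma 12(2) states that $(~|~)$ is a coboundary if and only if $\langle ~|~ \rangle$ vanishes on $W$. Substituting the description of $W$ just obtained, vanishing on $W$ reads exactly: $\sum_i [a_i, b_i] = 0$ implies $\sum_i \langle a_i\; |\; b_i \rangle = 0$. This is the assertion of the lemma.

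The argument is essentially formal once Lemma 12 is granted; the only substantive point, and the only potential pitfall, is the reduction of $\mu$ to the bare bracket. I expect no real obstacle here beyond carefully noting that the Poisson setting kills $a'$, since it is precisely the vanishing of the second term $(-1)^{|a|\cdot|b|}b'a$ that turns the kernel condition of Lemma 12 into the pure bracket relation $\sum_i [a_i,b_i] = 0$ appearing in the statement.
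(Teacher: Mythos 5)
Your proposal is correct and follows exactly the paper's route: the paper's entire proof is the remark that the lemma ``immediately follows from Lemma 12(2)'', and you have simply made explicit the one substantive point, namely that the Poisson hypothesis on $A=H_1\otimes B$ forces $a'=[a,1]=0$, so that $\mu(a\otimes b)=[a,b]$ and the kernel $W$ becomes precisely the relation $\sum_i[a_i,b_i]=0$. Nothing further is needed.
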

This lemma immediately follows from Lemma 12 (2).
\begin{lemma}
    Let $(x\; |\; y)$ be a cyclic cocycle on $K(A)$. Then $(A\; |\; v)=(0)$.
\end{lemma}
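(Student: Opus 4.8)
The plan is to exploit two features special to this Poisson setting: that $A$ is perfect under the bracket, i.e.\ $[A,A]=A$, and that the induced derivation is trivial, $a'=[a,1]=0$ for every $a\in A$ (the latter because $1$ is central for a Poisson bracket, so $[1,c]=2[1,c]$ forces $[a,1]=0$). Neither of these uses any property of the cocycle, so the conclusion will hold for \emph{every} cyclic cocycle, only invoking super skew-symmetry and the cyclic identity \eqref{eq4}.

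First I would reduce to the case where the first argument is a commutator. Since $A=H_1\otimes B$ and $[H_1,H_1]=H_1$ (the identity $p^iq^j=\tfrac{1}{j+1}[p,p^iq^{j+1}]$ recorded in the proof of Lemma 14, together with $1=[p,q]$), one gets $[A,A]=A$ exactly as in that proof. Hence an arbitrary $a\in A$ can be written as $a=\sum_i[c_i,d_i]$ with homogeneous $c_i,d_i$, and by bilinearity it suffices to prove $([c,d]\;|\;v)=0$ for homogeneous $c,d$.

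Next I would convert the commutator back into a Jordan product inside $\operatorname{Kan}(A)$. The multiplication rule $(cv)(dv)=(-1)^{|d|}[c,d]$ gives $[c,d]=(-1)^{|d|}(cv)(dv)$, so that $([c,d]\;|\;v)=(-1)^{|d|}\bigl((cv)(dv)\;|\;v\bigr)$. Now apply \eqref{eq4} to the triple $(cv,dv,v)$. The two cyclically rotated terms carry the products $(dv)v$ and $(cv)v$; but $(cv)v=(cv)(1v)=[c,1]=c'$ and likewise $(dv)v=d'$, and in the Poisson case $c'=d'=0$. Thus both rotated terms vanish and \eqref{eq4} collapses to $\bigl((cv)(dv)\;|\;v\bigr)=0$. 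Summing over $i$ yields $(a\;|\;v)=0$.

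The main (indeed only nonformal) obstacle is the perfectness $[A,A]=A$: this is precisely where the $H_1$ factor is essential, and it is the one ingredient that would fail for a general Poisson algebra $A$. Everything else is a direct substitution into the multiplication table of the Kantor double, using that the derivation $a'$ is identically zero for a Poisson bracket.
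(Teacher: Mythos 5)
Your proposal is correct and follows essentially the same route as the paper: write $A=[A,A]=(Av)(Av)$ using the $H_1$ factor (this is established in the proof of Lemma 13, not Lemma 14), then apply the cyclic cocycle identity \eqref{eq4} to the triple $cv,\,dv,\,v$ and observe that the rotated terms vanish because $[a,1]=0$ for a Poisson bracket. The only differences are cosmetic (you spell out why $a'=0$ and the homogeneity reduction, which the paper leaves implicit).
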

\begin{proof}
    We showed in the proof of Lemma 13 that $A=[A,A]=(Av)(Av)$. By the cyclic cocycle identity $(av\cdot bv\; |\; v) + (-1)^{|b|(|a|+1)}(bv\cdot v\; |\; av) + (-1)^{|a|+|b|}(v\cdot av\; |\; bv)=0$. Since $[\; ,\; ]$ is a Poisson bracket, it follows that $bv\cdot v=[b,1]=-b'=0,v\cdot av=(-1)^{|a|}[1,a]=0$.
\end{proof}

Let $(x\; |\; y)$ be a mixed cocycle on $K(A)$. As above let $\left \langle a\; |\; b \right \rangle = (a\; |\; bv); a,b\in A$. By Lemma 16 we have $\left \langle A\; |\; 1 \right \rangle = (0)$.

\vspace{0.8em}

Now the identity \eqref{eq12} and Lemma 11 imply that
\begin{equation}
    \label{eq13}
    \left \langle a\; |\; b \right \rangle = - (-1)^{|a|\cdot |b|}\left \langle b | a \right \rangle ,
\end{equation}
\begin{equation}
    \label{eq14}
    \left \langle a\; |\; bc \right \rangle + (-1)^{|a|(|b|+|c|)}\left \langle b\; |\; ca \right \rangle + (-1)^{|c|(|a|+|b|)}\left \langle c\; |\; ab \right \rangle = 0,
\end{equation}
\begin{equation}
    \label{eq015}
    \left \langle [a,b]\; |\; c \right \rangle + (-1)^{|a|(|b|+|c|)}\left \langle [b,c]\; |\; a \right \rangle + (-1)^{|c|(|a|+|b|)}\left \langle [c,a]\; |\; b \right \rangle = 0.
\end{equation}

\vspace{0.8em}

The identity \eqref{eq015} repeats the first identity of Lemma 11 for convenience of a reader.

\vspace{0.8em}

Consider the tensor square $A\otimes A$ and the subspace $S\subseteq A\otimes A$ spanned by  elements $a\otimes b + (-1)^{|a|\cdot |b|}b\otimes a,ab\otimes c + (-1)^{|b|\cdot |c|}ac\otimes b + (-1)^{|a|(|b|+|c|)}bc\otimes a, [a,b]\otimes c + (-1)^{|b|\cdot |c|}[a,c]\otimes b  + (-1)^{|a|(|b|+|c|)}[b,c]\otimes a; a,b,c\in  A_{\bar{0}}\cup A_{\bar{1}}$.

\vspace{0.8em}

\begin{lemma}
    $A\otimes A = p\otimes A + S$.
\end{lemma}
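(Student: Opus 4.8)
The statement asserts that modulo the subspace $S$ (generated by the three families of relations: super-symmetry, the ``ab\otimes c'' Jordan-type relation, and the bracket Jacobi relation) together with the subspace $p\otimes A$, every tensor in $A\otimes A$ can be reduced to an element of $p\otimes A + S$. Equivalently, the images of $A\otimes A$ and $p\otimes A$ agree in the quotient $(A\otimes A)/S$. The plan is to exploit the very special structure $A = H_1\otimes B$ with $H_1 = F[p,q]$ and $[p,q]=1$, so that the monomials $p^iq^j$ span $H_1$ and the single generator $p$ suffices to ``pump'' arbitrary tensors down to the claimed form. The key leverage is the bracket relation in $S$: since $[p,\,p^iq^{j+1}/(j+1)] = p^iq^j$ (the identity already used in Lemma 13 to show $[H_1,H_1]=H_1$), I can write any basis monomial as a bracket with $p$, and the bracket Jacobi relation then lets me move factors of $p$ into the first tensor slot at the cost of terms that already lie in $p\otimes A$.

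\textbf{Key steps.} First I would reduce to spanning monomials: it suffices to show $m_1\otimes m_2 \in p\otimes A + S$ for monomials $m_1 = p^iq^j\otimes b_1$ and $m_2 = p^kq^\ell\otimes b_2$ of $A=H_1\otimes B$, by bilinearity. Second, using the super-symmetry relation $a\otimes b + (-1)^{|a||b|}b\otimes a \in S$, I can freely swap the two tensor factors modulo $S$ and sign, so it is enough to arrange that \emph{one} of the two factors becomes a multiple of $p$. Third, the main engine: for a first factor $h\otimes b_1$ with $h=p^iq^j$, I use $h = [p,\,\widetilde h]$ for a suitable $\widetilde h\in H_1$ (when the $q$-degree is positive) and invoke the bracket relation
\begin{equation}
\label{eqplan}
[p\otimes 1,\,\widetilde h\otimes b_1]\otimes c + (-1)^{\cdots}[p\otimes 1,\,c]\otimes(\widetilde h\otimes b_1) + (-1)^{\cdots}[\widetilde h\otimes b_1,\,c]\otimes(p\otimes 1) \in S
\end{equation}
to express $h\otimes b_1\otimes c$ (the first summand) in terms of a tensor whose \emph{second} factor of the outer tensor is $p\otimes 1$ (the third summand, which after super-symmetry lands in $p\otimes A$) plus a correction term (the middle summand) in which a factor of $p$ has been produced in the first slot. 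Iterating this descent on the total degree in $q$ (and symmetrically handling the pure powers of $p$, which are already reducible since $p^i = $ a product that can be pushed into $p\otimes A$ via the ``ab\otimes c'' relation) collapses everything into $p\otimes A + S$.

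\textbf{Main obstacle.} The delicate point is the bookkeeping of the descent: one must choose an ordering on monomials (presumably by $q$-degree, or by total degree) so that each application of the bracket relation strictly decreases the complexity of the ``non-$p$'' part while the error terms it generates are either already in $p\otimes A$ or of strictly lower rank, guaranteeing termination. The hardest case is when \emph{neither} tensor factor involves $q$ at all, i.e. tensors of the form $p^i\otimes b_1$ with $p^k\otimes b_2$, since the bracket $[p,\cdot]$ annihilates pure powers of $p$ and the pumping trick stalls; here I expect to fall back on the ``$ab\otimes c$'' Jordan relation, writing $p^i = p\cdot p^{i-1}$ and using $ab\otimes c + (-1)^{|b||c|}ac\otimes b + (-1)^{|a|(|b|+|c|)}bc\otimes a\in S$ to trade $p^i\otimes(\cdots)$ for terms carrying an explicit leading $p$ factor. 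Verifying that these two reduction mechanisms interlock without circularity—so that the combined induction actually terminates at $p\otimes A + S$—is the crux of the argument.
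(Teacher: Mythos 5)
Your overall strategy is the right one --- the paper's proof also runs on the identity $A=[p,A]$ (and $A=[q,A]$), the supersymmetry relation to swap slots, and the two remaining families of generators of $S$ --- but the proposal has a genuine gap exactly where you flag it: the descent does not terminate as described, and no correct termination device is supplied. Concretely, when you write $x=[p,\tilde x]$ and apply the bracket relation to $[p,\tilde x]\otimes y$, the third summand does land in $p\otimes A$, but the middle summand is $[p,y]\otimes\tilde x$, and your claim that it carries ``a factor of $p$ in the first slot'' is false: $[p,\cdot]$ lowers the $q$-degree by one, it does not produce a factor of $p$ (e.g.\ $[p,q^\ell b]=\ell q^{\ell-1}b$). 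Since $\tilde x$ has $q$-degree one \emph{higher} than $x$, the total $q$-degree of the error term equals that of $x\otimes y$, so the natural complexity measure is conserved, not decreased. Your fallback for the ``no $q$'' case, writing $p^i=p\cdot p^{i-1}$, fails precisely when $i=0$, i.e.\ on $B\otimes A$, which is in fact where the real difficulty sits.

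The paper closes this gap with two devices you are missing. First, from $1\otimes A\subseteq S$ and $[p,q]=1$ one gets $p\otimes[q,a]\equiv q\otimes[p,a]\bmod S$, hence $p\otimes A\equiv q\otimes A\bmod S$. Second --- and this is the key idea --- for $b\in B$, $a\in A$ the reduction of $b\otimes a=[p,qb]\otimes a$ produces, after splitting $qb\otimes[p,a]$ with the product relation and converting $q\otimes[p,ba]$ to $p\otimes[q,ba]$, a self-referential error term $b\otimes q[p,a]$. Rather than trying to make it smaller, the paper absorbs it: $b\otimes\bigl(a+q[p,a]\bigr)\equiv p\otimes b[q,a]\bmod S$, and the operator $P:a\mapsto a+q[p,a]$ is invertible (it acts as multiplication by $1+j$ on $p^iq^jb$), giving $b\otimes a\equiv p\otimes b[q,P^{-1}(a)]$. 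Without $P$ (or an equivalent mechanism) your two interlocking reductions cannot be shown to terminate, so as it stands the argument is incomplete.
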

\begin{proof}
    We will prove the lemma in several steps.
    \begin{enumerate}
        \item We notice that $A=[p,A]$. Indeed, $p^iq^jb=\frac{1}{j+1}[p,p^iq^{j+1}b]$ for an arbitrary element $b\in B$. Similarly, $A=[q,A]$.
        \item For an arbitrary element $a\in A$, $1\otimes a\in S$.
        \item For an arbitrary element $a\in A$ we have $[p,q]\otimes a=p\otimes [q,a]-q\otimes [p,a]\bmod S$, which implies that
        \begin{equation}
            \label{eq15}
            p\otimes [q,a] - q\otimes [p,a] \in S.
        \end{equation}
        In view of Step 1 we conclude that $p\otimes A= q\otimes A\bmod S$.
        \item Consider the linear operator $P: A\to A, a\mapsto a+q[p,a]$.
        This operator is a bijection. Indeed, if $a=p^iq^jb,b\in B$, then $P(a)=(1+j)p^iq^jb$.
        \item Let $a\in A,b\in B$, then $$
    b\otimes a = [p,qb]\otimes a =  p\otimes [qb,a] - (-1)^{|a|\cdot |b|}qb\otimes [p,a]\bmod S,
    $$
    $$qb\otimes [p,a] = q\otimes b[p,a] + b\otimes q[p,a]\bmod S,$$
    $$b[p,a]=[p,ba],$$
    $$q\otimes b[p,a] = q\otimes [p,ba] = p\otimes [q,ba]\bmod S.$$
    Hence, $$\begin{aligned}
    b\otimes a &= p\otimes [qb,a] - (p\otimes [q,ba] + b\otimes q[p,a]) \bmod S \\
&= p\otimes ([q,ba] - [q,ba]) - b\otimes q[p,a] \bmod S .
    \end{aligned}$$
    Now, 
    $$b\otimes (a+q[p,a]) = p\otimes ([qb,a]-q[b,a]) \bmod S,$$
    $$[qb,a]-q[b,a]=(-1)^{|a|\cdot |b|}[q,a]b=b[q,a],$$
    $$b\otimes P(a)=p\otimes b[q,a]\bmod S.$$
    Finally, $b\otimes a=p\otimes b[q,P^{-1}(a)]\bmod S$.
    \item Obviously, $A\otimes A= p\otimes A + q\otimes A + B\otimes A \bmod S = p\otimes A \bmod S$ by Step 3 and Step 5.

    This completes the proof of the lemma.
    \end{enumerate}
\end{proof}

\begin{proof}[Proof of the Proposition 1]

    Let $(x\; |\; y)$ be a mixed cocycle on $K(A)$. Let us check the condition of Lemma 15, that is, $\sum _i [a_i,b_i]=0; a_i,b_i\in A_{\bar{0}}\cup A_{\bar{1}}$ implies $\sum _i \left \langle a_i\; |\; b_i \right \rangle = 0$.

\vspace{0.8em}

    By Lemma 17, there exists an element $c\in A$ such that $\sum _i a_i\otimes b_i = p\otimes c \bmod S$. If $\sum _i x_i\otimes y_i \in S$, then $\sum _i [x_i, y_i] = 0$. By \eqref{eq13}, \eqref{eq14} and \eqref{eq015}, we also have $\sum _i \left \langle x_i\; |\; y_i \right \rangle = 0$. Hence we need to show that $[p,c]=0$ implies $\left \langle p\; |\; c \right \rangle = 0$.

\vspace{0.8em}

     The equality $[p,c]=0$ implies $c=\sum _{i\geq 0} p^ib_i,b_i\in B$.
     Let us show that $\left \langle p\; |\; p^ib \right \rangle = 0$. We have $p=\frac{1}{2}[p^2,q]$. Hence, $\left \langle p\; |\; p^ib \right \rangle = \frac{1}{2} \left \langle [p^2,q]\; |\; p^ib \right \rangle = \frac{1}{2} ( \left \langle p^2\; |\; [q,p^ib] \right \rangle - \left \langle q\; |\; [p^2,p^ib] \right \rangle )$.

\vspace{0.8em}

     If $i\geq 1$, then $[q,p^ib] = -ip^{i-1}b$. If $i=0$, then $[q,b]=0$. We also have $[p^2,p^ib]=0$. Assuming $i\geq 1$, we have $\left \langle p\; |\; p^ib \right \rangle = -\frac{i}{2} \left \langle p^2\; |\; p^{i-1}b \right \rangle = -i \left \langle p\; |\; p^{i}b \right \rangle $ and $(i+1)\left \langle p\; |\; p^ib \right \rangle = 0$. Hence $\left \langle p\; |\; p^ib \right \rangle = 0$.
\end{proof}

\section{Brackets of vector type}
\label{BracketVT}
Let $A$ be an associative commutative superalgebra with an even derivation $' : A\to A$.
Consider a Jordan bracket $[a,b]=a'b-ab'$.
We make the additional assumption that $1\in A_{\bar{0}}'A_{\bar{0}}$.
We will determine bracket cyclic cocycles and Poisson center of the Kantor double $K(A,[\; ,\; ])$.

\vspace{0.8em}

Let $(a\; |\; b)$ be a bracket cyclic cocycle on $A$. By \eqref{eq9} we have \begin{equation}
    \label{eq17}
    (a'b-ab'\; |\; c) = (a'\; |\; bc) - (-1)^{|a|\cdot |b|}(b'\; |\; ac).
\end{equation}
On the other hand, the cocycle identity implies
$$(a'b\; |\; c) = (a'\; |\; bc) + (-1)^{|a|\cdot |b|}(b\; |\; a'c),$$
$$(ab'\; |\; c) = (a\; |\; b'c) + (-1)^{|a|\cdot |b|}(b'\; |\; ac).$$
Substituting these equalities to the left hand side of \eqref{eq17} we get $(a'\; |\; bc) + (-1)^{|a|\cdot |b|}(b\; |\; a'c) - (a\; |\; b'c) - (-1)^{|a|\cdot |b|}(b'\; |\; ac) = (a'\; |\; bc) - (-1)^{|a|\cdot |b|}(b'\; |\; ac)$, which implies \begin{equation}
    \label{eq18}
    (a\; |\; b'c) = (-1)^{|a|\cdot |b|}(b\; |\; a'c).
\end{equation}

\vspace{0.8em}

We claim that there exists a linear functional $\lambda :A\to F$ such that $(a\; |\; b) = \lambda (a'b)$ for arbitrary elements $a,b\in A$.

\vspace{0.8em}

To prove the claim we need to show that $\sum _i a_i'b_i=0;a_i,b_i\in A$ implies $\sum _i (a_i\; |\; b_i) = 0$. 

By \eqref{eq18} for an arbitrary element $c\in A$, we have $$(c\; |\; a_i'b_i) = (-1)^{|a_i|\cdot |c|}(a_i\; |\; c'b_i).$$  
Hence $\sum _i (-1)^{|a_i|\cdot |c|}(a_i\; |\; c'b_i) = 0$.

\medskip

If $\sum _i a_i'b_i=0$, then for an arbitrary element $x\in A$, we have $\sum _i a_i'b_ix=0$. Hence $\sum _i (-1)^{|a_i|\cdot |c|}(a_i\; |\; c'b_ix) = \sum _i (-1)^{|a_i|\cdot |c| + |b_i|\cdot |x|}(a_i\; |\; c'xb_i) = 0$.

\medskip

Since $1\in A_{\bar{0}}'A_{\bar{0}}$, there exist elements $c_j,x_j\in A_{\bar{0}}$ such that $\sum _j c_j'x_j=1$. Now $\sum _{i,j} (a_i\; |\; c_j'x_jb_i) = \sum _i (a_i\; |\; b_i) = 0$.

\medskip

We have proved that $(a\; |\; b) = \lambda (a'b); a,b\in A$ for some linear functional $\lambda\in A^{*}$.

\vspace{0.8em}

For an arbitrary element $a\in A$, $(a\; |\; 1) = \lambda (a') = 0$. Hence $\lambda (A') = (0)$.
It is easy to see that if $\lambda$ is a linear functional on $A/A'$, then $(a\; |\; b) = \lambda (a'b)$ is a cyclic cocycle on $A$. Hence $C_{br}(A)$ can be identified with the dual space $(A/A')^{*}$.

\vspace{0.8em}

If $a$ lies in the Poisson center of $A$, then $a'=0$ and for an arbitrary element $b\in A$, we have $[b,a]+(ba)'=2b'a=0$. Again from $1\in A_{\bar{0}}'A_{\bar{0}}$, it follows that $a=0$, so $Z_p(A)=(0)$.

\vspace{0.8em}

{\bf Remark.}  We don't have a description of the space of mixed cyclic cocycles $HC_M(K(A))$.

\section{Superalgebras $A\otimes G(n),n\geq 1$}
\label{SuperalgebraAG}
Let $A$ be an associative commutative superalgebra with a Jordan bracket. Let $G(n)= \left \langle 1,\xi _1,\dots ,\xi _n \; |\; \xi _i \xi _j + \xi _j \xi _i = 0 \right \rangle $ be the Grassmann algebra with a standard Poisson bracket. We have already mentioned above that both brackets on $A$ and $G(n)$ extend to a Jordan bracket on $A\otimes G(n)$ via $$[A, \xi _i]=(0);\; \xi _i'=0,\; 1\leq i\leq n.$$

\vspace{0.8em}

\textit{In this section we assume that the bracket on $A$ is of vector type $[a,b]=a'b-ab'$, and $1\in A_{\bar{0}}'A_{\bar{0}}$.}

\vspace{0.8em}

Let us start with $n=1$. 

Let $(x\; |\; y)$ be a bracket cyclic cocycle on $A\otimes G(1)$, $G(1)=F\cdot 1+F\xi$, $\xi ^2=0$, $[\xi ,\xi ]=1$.

As  shown in the previous section, there exists a linear functional $\lambda \in (A/A')^{*}$ such that $(a\; |\; b) = \lambda (a'b);\; a,b\in A$. Choose $a\in A, x\in A\cup A\xi $. Then $a=[a\xi , \xi ]$. Hence $$(a\; |\; x) = ([a\xi , \xi ]\; |\; x) = (a'\xi \; |\; \xi x) = (-1)^{|a|}(\xi \; |\; a'\xi x).$$ This implies that $(A\; |\; A\xi )=(0)$. 

Now let $x=b\in A$. Then $(a\; |\; b) = (-1)^{|a|}(\xi \; |\; a'{\xi}b )$. Since $A'A=A$, it follows that $$(\xi \; |\; a\xi )=(-1)^{|a|}\lambda (a), (\xi a \; |\; \xi b)=(-1)^{|a|}(\xi \; |\; \xi ab)=(-1)^{|a|}\lambda (ab),$$ for arbitrary elements $a,b\in A$.

\vspace{0.8em}

Again we conclude that $C_{br}(A[\xi ])=(A/A')^{*}$.

\vspace{0.8em}

Let us show that $Z_p(A[\xi ])=Z_p (A)$. Indeed, the inclusions $Z_p (A)\subseteq Z_p(A[\xi ])$ and $Z_p(A[\xi ])\cap A\subseteq Z_p (A)$ are obvious. Let $a\in A$ such that $a\xi \in Z_p(A[\xi ])$. Then $a=[a\xi ,\xi ]=0$. This proves the claim.

\vspace{0.8em}

If $[a,b]=a'b-ab'$ is a vector type bracket and $A'A=A$, then $Z_p (A)=(0)$.

\vspace{0.8em}

We have nothing to say about mixed cyclic cocycles of $A$.

\vspace{0.8em}

Now let $n\geq 2$.
\begin{lemma}
    For an arbitrary associative commutative superalgebra $A$ with a Jordan bracket, we have $C_{br}(A\otimes G(n))=(0)$ for $n\geq 2$.
\end{lemma}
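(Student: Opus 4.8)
The plan is to show that any bracket cyclic cocycle on $A\otimes G(n)$ with $n\geq 2$ vanishes, by exploiting the extra Grassmann generators to force all values to zero. The key structural tool is the recurring trick, already used several times in the paper, that an element can be written as a bracket: for the Grassmann variables we have $\xi_i = [\xi_i, \xi_i\xi_j]$ whenever $i\neq j$ (up to a constant), or more usefully products $\xi_i\xi_j$ and various monomials can be generated from brackets among the $\xi$'s since $[\xi_i,\xi_j]=\delta_{ij}$. First I would recall that by the defining identity \eqref{eq9} for a bracket cyclic cocycle, $([a,b]\mid c) = (a'\mid bc) - (-1)^{|a||b|}(b'\mid ac)$, together with the skew-symmetry \eqref{eq8}, every value of the cocycle is controlled by its behavior on the derivation $'$ and on brackets.

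The main idea is that for $n\geq 2$ there are at least two odd Grassmann generators $\xi_1,\xi_2$ with $[\xi_i,\xi_j]=\delta_{ij}$ and $\xi_i'=0$ (the $\xi_i$ lie in the Poisson center of $G(n)$ as far as the derivation is concerned). First I would exploit the identity $1 = [\xi_1,\xi_1]$ (or $\delta_{ii}=1$) to write arbitrary elements as brackets, reducing the cocycle on such elements to terms of the form $(a'\mid \cdots)$ where the derivation annihilates the Grassmann factor. Because $\xi_i'=0$, whenever I bracket two elements whose ``$A$-parts'' I can control, the right-hand side of \eqref{eq9} collapses. The concrete plan: take a general element $w\in A\otimes G(n)$ and write it, using $1=[\xi_1,\xi_1]$ and the Leibniz-type rule for the extended bracket, as a sum of brackets $[x,y]$ with $x$ or $y$ having a Grassmann factor killed by the derivation; then \eqref{eq9} expresses $(w\mid z)$ purely in terms of values already shown to vanish.

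More precisely, I would argue in two stages. Stage one: show that the cocycle vanishes on pairs where at least one argument contains a factor $\xi_i$, by writing $\xi_i = [\xi_i, \xi_i\xi_j]$ up to scalar (using $[\xi_i,\xi_j]=\delta_{ij}$ and the Leibniz rule) and applying \eqref{eq9}, noting the derivative terms drop out since $\xi_i'=0$. Stage two: reduce the general case to stage one by using $1\in (A\otimes G(n))'\cdot(A\otimes G(n))$ or by using that $A\otimes G(n) = [\xi_1, (A\otimes G(n))\xi_1] + \cdots$ type decompositions, so that every element is a bracket involving a Grassmann generator. The cyclic identity \eqref{eq4} then closes the argument, forcing the remaining ``pure $A$'' values to vanish as well once they are tied via brackets to the Grassmann part.

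The hard part will be handling the precise bookkeeping of the extended bracket on $A\otimes G(n)$ — specifically the third clause of Lemma 7, $[\xi_1\cdots\xi_k, a] = (k-1)\xi_1\cdots\xi_k\,a'$, which introduces derivative terms that do not vanish for $k\neq 1$ and must be tracked carefully through \eqref{eq9}. The delicate point is ensuring that when I write elements of $A\otimes G(n)$ as brackets, the derivative factors produced either vanish ($\xi_i'=0$) or reproduce values already known to be zero, without creating a circular dependence. I expect the cleanest route is to first establish $(A\otimes G(n)\mid \xi_i\cdot(A\otimes G(n)))=(0)$ for each generator $\xi_i$ and then bootstrap to the full vanishing using $n\geq 2$ so that two independent generators are available to generate $1$ and to express any monomial as a bracket.
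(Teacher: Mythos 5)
Your proposal is correct and follows essentially the same route as the paper: the paper writes $\xi_1=[\xi_1\xi_2,\xi_2]$ and $a=[a\xi_1,\xi_1]$, applies \eqref{eq9} with $\xi_i'=0$ (so the derivative terms vanish) to get $(\xi_i\mid x)=0$ and $(a\mid x)=(a'\xi_1\mid\xi_1x)$, and then uses the cyclic identity \eqref{eq4} together with $\xi_1\cdot\xi_1=0$ to reduce the latter to already-vanishing values. The ``bookkeeping'' you flag as the hard part is exactly this short two-line computation; no further idea is needed.
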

\begin{proof}
     Let $(x\; |\; y)$ be a bracket cyclic cocycle on $A\otimes G(n)$. We have $$(\xi _1 \; |\; x )=([\xi _1\xi _2 ,\xi _2 ] \; |\; x )=((\xi _1\xi _2)' \; |\;\xi _2 x )-(\xi _2' \; |\; \xi _1\xi _2 x )=0.$$
     We have shown that $(\xi _i \; |\; A\otimes G(n) )= 0, 1\leq i\leq n$.

\vspace{0.8em}

     For an arbitrary element $a\in A$, we have $a=[a\xi _1 ,\xi _1 ]$. Hence, $$(a\; |\; x) = ([a\xi _1 ,\xi _1 ] \; |\; x ) = (a'\xi _1\; |\; \xi _1 x) + (\xi _1' \; |\; a\xi _1 x) = (a'\xi _1\; |\; \xi _1 x).$$

     Furthermore, $$(a'\xi _1\; |\; \xi _1 x) = (a'\; |\; \xi _1\cdot \xi _1 x) + (\xi _1 \; |\; a'\xi _1 x) = 0+0=0.$$
     We have shown that $(A \; |\; A\otimes G(n) )= (0)$.
\end{proof}

As in the case $n=1$, we get $Z_p(A\otimes G(n))=Z_p(A)$.

\vspace{0.8em}

\begin{lemma}
    Let $L$ be a perfect Lie superalgebra. Suppose that there is $h\in L_{\bar{0}}$ such that $L=\oplus _{i\in \mathbb{Z}}L_i$,  $[h,a_i]=ia_i$, $\forall a_i\in L_i$. Let $(x\; |\; y)$ be a $2$-cocycle on $L$. Then $(L_i\; |\; L_j)=(0)$ whenever $i+j\neq 0$.
\end{lemma}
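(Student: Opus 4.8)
The plan is to exploit that the given grading is the eigenspace decomposition of $\operatorname{ad}(h)$, so that pairing against $h$ inside the cocycle identity reads off the total degree of a bracket. As in the earlier lemmas of this section, I would first reduce to the case of ordinary Lie algebras; the super signs are then recovered by the usual Grassmann-envelope bookkeeping and do not affect the structure of the argument.

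Fix homogeneous elements $a_i\in L_i$ and $b_j\in L_j$ and apply the $2$-cocycle identity to the triple $h,a_i,b_j$. Since $h$ is even, $[h,a_i]=ia_i$ and $[b_j,h]=-j\,b_j$, so the outer two terms become multiples of $(a_i\,|\,b_j)$ and $(b_j\,|\,a_i)$. Using super skew-symmetry to rewrite $(b_j\,|\,a_i)$ in terms of $(a_i\,|\,b_j)$, these two contributions combine, and I expect to obtain
\[
(i+j)\,(a_i\,|\,b_j) + ([a_i,b_j]\,|\,h)=0 .
\]
Consequently, for $i+j\neq 0$ the value $(a_i\,|\,b_j)$ is completely controlled by $([a_i,b_j]\,|\,h)$, and since $[a_i,b_j]\in L_{i+j}$ the entire statement reduces to the single claim that $(L_m\,|\,h)=0$ for every $m\neq 0$.

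To settle this reduced claim I would invoke perfectness. Because the bracket respects the grading, $L=[L,L]$ forces $L_m=\sum_{p+q=m}[L_p,L_q]$ for each $m$, so it suffices to show $([a_p,b_q]\,|\,h)=0$ whenever $p+q=m\neq 0$. The displayed identity already equates this to $-(p+q)\,(a_p\,|\,b_q)$, so a direct substitution is circular; this is exactly the step I expect to be the \emph{main obstacle}. I would break the circularity by expanding the second slot as well: writing $h\in L_0=\sum_{p}[L_p,L_{-p}]$ and applying the cocycle identity to triples of the form $c_m,a_p,b_{-p}$ re-expresses $(c_m\,|\,h)$ through pairings $(L_{m+p}\,|\,L_{-p})$ and $(L_{m-p}\,|\,L_p)$ in which neither factor lies in $L_0$. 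Feeding these back into the displayed identity and running an induction on a suitable measure of the spread of degrees that occur should force every off-diagonal value to vanish, which is precisely the assertion of the lemma.
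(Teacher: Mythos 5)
Your opening computation is the same one the paper uses, and it is correct: applying the cocycle identity to the triple $h,a_i,b_j$ gives $(i+j)(a_i\,|\,b_j)+([a_i,b_j]\,|\,h)=0$, so everything hinges on the term $([a_i,b_j]\,|\,h)$, and you have located the main obstacle exactly where it is. The problem is that your proposed way around it cannot succeed: the reduced claim ``$(L_m\,|\,h)=0$ for all $m\neq 0$'', for an \emph{arbitrary} $2$-cocycle, is false. Take $L=\mathfrak{sl}_2=Fe+Fh+Ff$ with $\deg e=2$, $\deg f=-2$, $\deg h=0$ (a perfect graded Lie algebra with grading element $h$), and the coboundary $(x\,|\,y)=\lambda([x,y])$ where $\lambda(e)=1$, $\lambda(f)=\lambda(h)=0$; then $(h\,|\,e)=\lambda(2e)=2\neq 0$ although $\deg h+\deg e\neq 0$. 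So no expansion of $h$ as $\sum_p[L_p,L_{-p}]$ combined with an induction on degree spreads can force the off-diagonal values of a general cocycle to vanish: those values genuinely change when one adds a coboundary, and only their class modulo coboundaries is constrained.

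The paper's proof establishes precisely the cohomological version, which is also what the later applications need. It passes to the universal central extension $\widehat L=L+Z$, where $Z$ is spanned by classes of tensors $\sum_i a_i\otimes b_i$ with $\sum_i[a_i,b_i]=0$, and shows that the graded component $Z_n$ vanishes for $n\neq 0$: if $\sum_i[a_i,b_i]=0$ with $\deg a_i+\deg b_i=n$ for every $i$, then the same eigenvalue computation yields
$n\sum_i(a_i\,|\,b_i)=\sum_i\bigl(([h,a_i]\,|\,b_i)+(a_i\,|\,[h,b_i])\bigr)$, and perfectness (writing $h=\sum_j[e_j,f_j]$) together with the invariance identity turns the right-hand side into an expression in $\sum_i[a_i,b_i]=0$, hence $0$. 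Restricting attention to relations $\sum_i[a_i,b_i]=0$ is exactly what kills the term that blocks you: the obstruction becomes $(h\,|\,0)=0$ and the circularity disappears. The upshot is that every cocycle is cohomologous to one supported in total degree $0$, not that every cocycle literally vanishes off degree $0$; to argue in the form you propose, you should first normalize the cocycle modulo coboundaries (equivalently, work on $Z(\widehat L)$) rather than try to repair the induction.
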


\begin{proof}
    Let $\widehat{L}$ be a universal central extension of $L$. It is known that $\widehat{L}=L+Z$, where
    $Z=\{ \sum _i a_i\otimes b_i\; |\; \sum _i [a_i,b_i]=0 \} /I$, and $I$ is the linear span of the set $ \{ a\otimes b + (-1)^{|a|\cdot |b|}b\otimes a, [a,b]\otimes c + (-1)^{|a|(|b|+|c|)}[b,c]\otimes a + (-1)^{|c|(|a|+|b|)}[c,a]\otimes b\; |\; a,b,c\in L \}$.
    The $\mathbb{Z}$-grading $L=\oplus _{i\in \mathbb{Z}}L_i$ extends to $Z$ and to the superalgebra $\widehat{L}$.

\vspace{0.8em}

    Let us show that $Z_n=(0)$ for $n\neq 0$. Indeed, Let $(x\; |\; y)$ be a $2$-cocycle of $L$ and let $z= \sum _i a_i\otimes b_i, \sum _i [a_i,b_i]=0, \deg (a_i)+\deg (b_i)=n$. 
 
    Since $L$ is perfect, it follows that $h=\sum _j [e_j,f_j];e_j,f_j\in L$. Now, $$\sum _i ([h,a_i ]\; |\; b_i) + \sum _i (a_i\; |\; [h,b_i ]) =$$
		$$\sum _j ([e_j,\sum _i [a_i,b_i]]\; |\; f_j) + \sum _j (e_j\; |\; [f_j,\sum _i [a_i,b_i]]]) = 0.$$
    
    On the other hand, $\sum _i ([h,a_i ]\; |\; b_i) + \sum _i (a_i\; |\; [h,b_i ]) = n\sum _i (a_i\; |\; b_i)$. 
		
		This completes the proof of the lemma.
\end{proof}


The following result shed some light on mixed cyclic cocycles of $A\otimes G(n)$.

Mixed cyclic cocycles on $J=K(A\otimes G(n))$ correspond to $2$-cocycles $(x\; |\; y)$ on Lie superalgebras $(A\otimes G(n+3),[\; ,\; ])$ such that $(a\xi _{i_1}\cdots \xi _{i_p}\; |\; b\xi _{j_1}\cdots \xi _{j_q})\neq 0$,   $\{ i_1,\dots , i_p\}\neq \{ j_1,\dots , j_q\}$. 

For a subset $\pi\subseteq\{ 1,\dots ,n \},\pi =\{ i_1<\dots <i_p \}$, denote $\xi _{\pi } = \xi _{i_1}\cdots \xi _{i_p}$.

\vspace{0.8em}

Let $(x\; |\; y)$ be a $2$-cocycle on a Lie superalgebra $(A\otimes G(n),[\; ,\; ])$, where $A$ is a contact superalgebra, $[\; ,\; ]$ is a contact bracket on $A\otimes G(n)$ that extends the contact bracket on $A$ and the standard Poisson bracket on $G(n)$.

\begin{proposition}
    Let $(a\xi _{\pi }\; |\; b\xi _{\tau })\neq 0;a,b\in A;\pi , \tau\subseteq\{ 1,\dots ,n \} ,\pi\neq \tau$. Then $\pi\dot{\cup}\tau = \{ 1,\dots ,n \}$.
\end{proposition}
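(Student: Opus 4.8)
The plan is to recognize the pairing as an invariant form on the exterior algebra and finish by Schur's lemma. Write $V=\operatorname{span}(\xi_1,\dots,\xi_n)$ with the nondegenerate symmetric form $[\xi_i,\xi_j]=\delta_{ij}$, so that $G(n)=\Lambda^{\bullet}V$ and $\xi_\pi$ is a basis of $\Lambda^{|\pi|}V$. The first step is to observe that the quadratic elements $\xi_i\xi_j$ ($i\neq j$) act on $A\otimes G(n)$ by inner derivations that annihilate $A\otimes 1$ and restrict on each $A\otimes\Lambda^{p}V$ to $A$ tensored with the standard $\mathfrak{so}(n)$-action on $\Lambda^{p}V$. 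Indeed, by Lemma 8(3) the coefficient $\tfrac{|\sigma|-2}{2}$ vanishes when $|\sigma|=2$, so $[\xi_i\xi_j,a]=0$ for all $a\in A$ and $(\xi_i\xi_j)'=0$; hence $\operatorname{ad}(\xi_i\xi_j)$ is also a derivation of the associative product, and a short computation gives $[\xi_i\xi_j,\xi_l]=\delta_{il}\xi_j-\delta_{jl}\xi_i$, the generator $E_{ij}-E_{ji}$ of $\mathfrak{so}(n)=\mathfrak{so}(V)$. Thus $\mathfrak{g}:=\operatorname{span}(\xi_i\xi_j)\cong\mathfrak{so}(n)$ sits in $L_{\bar 0}$ as inner derivations.

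Next I fix $a,b\in A$ and study the form $B_{a,b}(\xi_\pi,\xi_\tau):=(a\xi_\pi\mid b\xi_\tau)$ on $\Lambda^{\bullet}V$. Since $\operatorname{ad}(\xi_i\xi_j)$ kills $A$ and is a derivation, $[\xi_i\xi_j,a\xi_\pi]=a\,[\xi_i\xi_j,\xi_\pi]$, and the cocycle identity in the form $([u,x]\mid y)=(u\mid[x,y])-(x\mid[u,y])$ for even $u$ gives
\[
B_{a,b}(\xi_i\xi_j\cdot\xi_\pi,\xi_\tau)+B_{a,b}(\xi_\pi,\xi_i\xi_j\cdot\xi_\tau)=(\xi_i\xi_j\mid[a\xi_\pi,b\xi_\tau]).
\]
Hence $B_{a,b}$ is $\mathfrak{so}(n)$-invariant exactly when the right-hand correction vanishes. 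Granting invariance, the conclusion is pure invariant theory: the form on $V$ induces both the metric pairing and the top-form (Hodge) pairing on $\Lambda^{\bullet}V$, and as $\mathfrak{so}(n)$-modules one has $\Lambda^{p}V\cong\Lambda^{q}V$ only for $q\in\{p,n-p\}$; so by Schur an invariant pairing between $\Lambda^{p}V$ and $\Lambda^{q}V$ is a combination of the metric (supported on $\tau=\pi$) and the top-form pairing (supported on $\tau=\pi^{c}$). The hypothesis $\pi\neq\tau$ discards the diagonal/metric term, leaving $\tau=\pi^{c}$, that is $\pi\,\dot{\cup}\,\tau=\{1,\dots,n\}$. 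It is reassuring that $\pi\neq\tau$ is precisely what removes the $\Lambda^{p}\cong\Lambda^{p}$ contribution.

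The main obstacle is establishing the invariance, i.e. showing the correction term $(\xi_i\xi_j\mid[a\xi_\pi,b\xi_\tau])$ vanishes for the root generators $\xi_i\xi_j$. I would attack it through a grading argument in the spirit of Lemma 19: the fermionic degree defines a $\mathbb{Z}$-grading of $L$, and combining it with a Cartan element $h\in\mathfrak{g}$ that realizes part of it innerly forces $(\xi_i\xi_j\mid z)=0$ unless $z$ has fermionic degree $2$, collapsing the obstruction to the single stratum $|\pi|+|\tau|=4$. The few remaining low-degree cases can then be eliminated using the normalizations intrinsic to mixed cocycles (the relations $(A\mid A)=(Av\mid Av)=0$ and the vanishing identities already derived for mixed cocycles) together with a presentation such as $\xi_i\xi_j=\pm[\xi_i\xi_j\xi_l,\xi_l]$ for an auxiliary index $l\notin\{i,j\}$, which rewrites the correction through pairings known to vanish.

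An alternative, purely computational route bypasses invariant theory by proving the two halves separately: disjointness ($k\in\pi\cap\tau\Rightarrow(a\xi_\pi\mid b\xi_\tau)=0$) and covering ($k\notin\pi\cup\tau\Rightarrow(a\xi_\pi\mid b\xi_\tau)=0$). Both follow from iterating the derivation form of the cocycle identity applied to the factorization $a\xi_\pi=\pm[\xi_k,a\xi_{\pi\cup k}]$ (valid when $k\notin\pi$) and the contact relation $[\xi_k,b\xi_\tau]=\pm\tfrac12\,b'\xi_{\tau\cup k}$ (valid when $k\notin\tau$), both of which I have checked from Lemma 8 and the Poisson structure on $G(n)$. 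In that approach the delicate point is the sign bookkeeping and the termination of the resulting degree-raising recursion, which is why I favor the $\mathfrak{so}(n)$-invariance argument as the conceptually cleaner path.
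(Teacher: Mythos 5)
Your main route hinges on the claim that, for fixed $a,b$, the form $B_{a,b}(\xi_\pi,\xi_\tau)=(a\xi_\pi\,|\,b\xi_\tau)$ is $\mathfrak{so}(n)$-invariant, and this is where the argument breaks: the invariance defect $(\xi_i\xi_j\,|\,[a\xi_\pi,b\xi_\tau])$ does not vanish in general, and no grading argument can make it vanish. Concretely, take $\pi=\{i\}$, $\tau=\{j\}$: then $[a\xi_i,b\xi_j]=c\,\xi_i\xi_j$ for some $c\in A$, and $(\xi_i\xi_j\,|\,c\,\xi_i\xi_j)$ is a \emph{diagonal} entry of the cocycle ($\sigma=\{i,j\}$ paired with itself), which the proposition explicitly permits to be nonzero and which genuinely is nonzero for the cocycles of interest (these diagonal entries are exactly where the bracket-type and $Z_p$-type contributions live). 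Your proposed repair — that $(\xi_i\xi_j\,|\,z)=0$ unless $z$ has fermionic degree $2$ — is also not available: the fermionic degree is not the eigenvalue of $\operatorname{ad}(h)$ for any $h$ in the algebra (the Cartan elements $2\xi\eta=-\sqrt{-1}\,\xi_i\xi_k$ measure the \emph{difference} of occupation numbers of a hyperbolic pair, not the total degree), and even granting it, the degree-$2$ stratum is precisely where the nonvanishing diagonal correction sits. Without invariance, Schur's lemma gives you nothing, so the conceptual half of your proposal does not close. Your fallback computational route is closer in spirit to what is needed, but as stated it begins from a false lemma: ``$k\in\pi\cap\tau\Rightarrow(a\xi_\pi\,|\,b\xi_\tau)=0$'' would force all nontrivial diagonal pairings $(a\xi_\pi\,|\,b\xi_\pi)$, $\pi\neq\emptyset$, to vanish, which they do not; and you yourself leave the termination of the degree-raising recursion open.

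For contrast, the paper's proof is a \emph{single-torus} weight argument rather than a full $\mathfrak{so}(n)$-invariance argument, and this is exactly what sidesteps the defect term. Fixing $k\in\tau\setminus\pi$ and a suitable second index, one forms the hyperbolic pair $\xi=\tfrac12(\xi_i+\sqrt{-1}\xi_k)$, $\eta=\tfrac12(\xi_i-\sqrt{-1}\xi_k)$ and the element $h=2\xi\eta$, whose adjoint action gives an integer grading with eigenvalues $-1,0,1$; Lemma 19 (which needs only that $h$ is inner and $L$ perfect, not any correction-term vanishing) then kills the pairing whenever the two weights do not sum to zero. Choosing the second index inside $\pi\cap\tau$ (when $|\pi\cap\tau|=1$) or outside $\pi\cup\tau$ (when $\pi\cap\tau=\emptyset$ but the union is proper) produces weights $\pm1$ against weight $0$ and finishes those cases; the case $|\pi\cap\tau|\geq 2$ is handled by a direct cocycle manipulation using a factorization $a\xi_\pi=\pm[a\xi_{\pi'}\xi_k,\xi_{\pi''}\xi_k]$. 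If you want to salvage your write-up, replace the global $\mathfrak{so}(n)$-invariance by these case-by-case $\operatorname{ad}(h)$-weight computations; the invariant-theoretic conclusion you were aiming for is then unnecessary.
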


\begin{proof}
    Suppose that $k\in \tau\setminus \pi$.
    \begin{enumerate}
        \item Suppose at first that $|\pi\cap \tau |\geq 2$. Then $\pi = \pi ' \dot{\cup} \pi ''$ is a disjoint union of two subsets, such that $\pi '\cap \tau\neq\emptyset$ and $\pi ''\cap \tau\neq\emptyset$. 
				
				We have $[a\xi _{\pi '}\xi _k , \xi _{\pi ''}\xi _k]=\pm a\xi _{\pi }$. Hence 
				$$(a\xi _{\pi }\; |\; b\xi _{\tau }) = \pm ([a\xi _{\pi '}\xi _k , \xi _{\pi ''}\xi _k]\; |\; b\xi _{\tau }) =$$ $$\pm (a\xi _{\pi '}\xi _k\; |\; [\xi _{\pi ''}\xi _k , b\xi _{\tau }]) \pm (\xi _{\pi '}\xi _k\; |\; [a\xi _{\pi '}\xi _k , b\xi _{\tau }]) = 0.$$
        
\vspace{0.8em}

        \item Now suppose that $\pi\cap \tau = \{ i \} $ , $\pi '=\pi\setminus\{ i \}$ , $\tau '=\tau\setminus\{ i,k \} $, $\xi _{\pi }=\pm \xi _i\xi _{\pi '}$, $ \xi _{\tau } = \pm \xi _i \xi _k \xi _{\tau '}$. Let $$\xi = \frac{1}{2}(\xi _i + \sqrt{-1}\xi _k),\eta = \frac{1}{2}(\xi _i - \sqrt{-1}\xi _k),[\xi ,\xi ]=[\eta ,\eta ]=0,[\xi ,\eta ]=\frac{1}{2}.$$
        Then $\xi _i = \xi + \eta , \xi _k = -\sqrt{-1}(\xi - \eta ), h= 2\xi\eta ,[h,\xi ]=\xi ,[h,\eta ]=-\eta.$
        
				The element $a\xi _{\pi }$ is a linear combination of elements $a\xi _{\pi '}\xi $ and $a\xi _{\pi '}\eta $. We also have $b\xi _{\tau } = \pm b\xi _{\tau '}\xi _i \xi _k=\pm 2b\xi _{\tau '}\xi\eta$. Now, $$[h,a\xi _{\pi '}\xi ]=\pm a\xi _{\pi '}\xi, \; [h,a\xi _{\pi '}\eta ] = \pm a\xi _{\pi '}\eta,\; [h,b\xi _{\tau '}\xi\eta ]=0.$$ Hence $(a\xi _{\pi }\; |\; b\xi _{\tau })=0$ by Lemma 19.
        
\vspace{0.8em}

        \item Finally, suppose that $\pi\cap \tau = \emptyset$ but $\pi\dot{\cup}\tau \neq \{ 1,\dots ,n \}$. Let \\ $i\in \{ 1,\dots ,n \}\setminus (\pi\dot{\cup}\tau )$. Consider again the elements $\xi = \frac{1}{2}(\xi _i + \sqrt{-1}\xi _k)$,$\eta = \frac{1}{2}(\xi _i - \sqrt{-1}\xi _k), \; h= 2\xi\eta $. Then the element $b\xi _{\tau }$ is a linear combination of elements $b\xi _{\tau '}\xi,\;  b\xi _{\tau '}\eta,\; [h,b\xi _{\tau '}\xi ]=\pm b\xi _{\tau '}\xi,\; [h,b\xi _{\tau '}\eta ]=\pm b\xi _{\tau '}\eta,\; [h,a\xi _{\pi }] = 0$. Again by Lemma 19, $(a\xi _{\pi }\; |\; b\xi _{\tau })=0$.
    \end{enumerate}

    This completes the proof of the lemma.
\end{proof}




\bibliographystyle{elsarticle-num}

\end{document}